\DeclareMathAlphabet{\mathcal}{OMS}{cmsy}{m}{n}
\newcommand{\qq}{\gamma}
\newcounter{margcount} %Zähler erzeugen
\newcommand{\ignore}[1]{{}}
\renewcommand{\AA}{{\mathcal A}}
\newcommand{\BB}{{\mathcal B}}
\newcommand{\DD}{{\mathcal D}}
\newcommand{\KK}{{\mathcal K}}
\newcommand{\LL}{{\mathcal L}}
\newcommand{\MM}{{\mathcal M}}
\newcommand{\NN}{{\mathcal N}}
\newcommand{\OO}{{\mathcal O}}
\newcommand{\PP}{{\mathcal P}}
\DeclareMathOperator*{\dist}{dist} 
\DeclareMathOperator*{\graph}{graph}
\DeclareMathOperator*{\supp}{supp}
\newcommand{\R}{\mathbb{R}}
\newcommand{\N}{\mathbb{N}}
\newcommand{\Z}{\mathbb{Z}}
\newcommand{\C}{\mathbb{C}}
\newcommand{\eps}{\epsilon}
\renewcommand{\phi}{\varphi}
\newcommand{\ddt}{\, \frac{d}{dt}}
\renewcommand{\iint}{\int\!\!\!\!\int}
\newcommand{\rring}[1]%
\def\XXint#1#2#3{{\setbox0=\hbox{$#1{#2#3}{\int}$}
\vcenter{\hbox{$#2#3$}}\kern-.5\wd0}}
\newcommand{\upref}[2]{\hspace{-0.8ex}\stackrel{\eqref{#1}}{#2}}
\newcommand{\lupref}[2]{\hspace{0ex} \stackrel{\eqref{#1}}{#2}}
\newcommand{\lupupref}[3]{\hspace{0ex}\stackrel{\eqref{#1},\eqref{#2}}{#3}}
\newcommand{\nco}[1]{\|#1\|_{L^\infty}}
\newcommand{\nlt}[1]{\|#1\|_{L^2}}
\newcommand{\nltL}[2]{\|#1\|_{L^2({#2})}}
\newcommand{\cciL}[1]{C_c^{\infty}(#1)}
\newcommand{\ciiL}[1]{C^{\infty}(#1)}
\newcommand{\NNN}[2]{\|#1\|_{#2}}
\newcommand{\XStfe}[1]{X_{0}^{#1}}
\newcommand{\XNtfe}[2]{\|#1\|_{\XStfe{#2}}}
\newcommand{\XSLtfe}[2]{\XStfe{#1}({#2})}
\newcommand{\XNLtfe}[3]{\|#1\|_{\XSLtfe{#2}{#3}}}
\newcommand{\PXSLtfe}[2]{T\XSLtfe{#1}{#2}}
\newcommand{\PXNLtfe}[3]{\|#1\|_{\PXSLtfe{#2}{#3}}}
\newcommand{\PXStfe}[1]{{TX}_{0}^{#1}}
\newcommand{\PXNtfe}[2]{\|#1\|_{\PXStfe{#2}}}
\newcommand{\XS}[1]{X_\eps^{#1}}
\newcommand{\XM}[2]{[#1]_{\XS{#2}}}
\newcommand{\XN}[2]{\|#1\|_{\XS{#2}}}
\newcommand{\XP}[3]{\langle #1, #2 \rangle_{\XS{#3}}}
\newcommand{\XSL}[2]{\XS{#1}({#2})}
\newcommand{\XML}[3]{[#1]_{\XSL{#2}{#3}}}
\newcommand{\XNL}[3]{\|#1\|_{\XSL{#2}{#3}}}
\newcommand{\XSo}[1]{\mathring X_\eps^{#1}}
\newcommand{\XSoo}[1]{\rring{X}_\eps^{\raisebox{0ex}[0ex][0ex]{\hspace{0.1ex}$\scriptstyle #1$}}}
\newcommand{\XNoo}[2]{\|#1\|_{\XSoo{#2}}}
\newcommand{\XSLo}[2]{\mathring X_\eps^{#1}{(#2)}}
\newcommand{\XSLoo}[2]{\rring{X}_\eps^{\raisebox{0ex}[0ex][0ex]{\hspace{0.1ex}$\scriptstyle #1$}}({#2})}
\newcommand{\TXS}[1]{L^2(\XS{#1})}
\newcommand{\TXM}[2]{[#1]_{\TXS{#2}}}
\newcommand{\TXN}[2]{\|#1\|_{\TXS{#2}}}
\newcommand{\TXSL}[2]{L^2(\XSL{#1}{#2})}
\newcommand{\TXNL}[3]{\|#1\|_{\TXSL{#2}{#3}}}
\newcommand{\CXS}[1]{C^0(\XS{#1})}
\newcommand{\CXN}[2]{\|#1\|_{\CXS{#2}}}
\newcommand{\CXSL}[2]{C^0(\XSL{#1}{#2})}
\newcommand{\CXNL}[3]{\|#1\|_{\CXSL{#2}{#3}}}
\newcommand{\PXS}[1]{T\XS{#1}}
\newcommand{\PXSo}[1]{T\XSo{#1}}
\newcommand{\PXSoo}[1]{T\XSoo{k}}
\newcommand{\PXN}[2]{\|#1\|_{\PXS{#2}}}
\newcommand{\XSeps}[2]{X_{#2}^{#1}}
\newcommand{\PXSeps}[2]{T\XSeps{#1}{#2}}
\newcommand{\PXNeps}[3]{\|#1\|_{\PXSeps{#2}{#3}}}
\newcommand{\YSeps}[2]{Y_{#2}^{#1}}
\newcommand{\PYSeps}[2]{T\YSeps{#1}{#2}}
\newcommand{\PYNeps}[3]{\|#1\|_{\PYSeps{#2}{#3}}}
\newcommand{\PXSL}[2]{T\XSL{#1}{#2}}
\newcommand{\PXNL}[3]{\|#1\|_{\PXSL{#2}{#3}}}
\newcommand{\PXSLoo}[2]{T\XSLoo{#1}{#2}}
\newcommand{\PXSN}[1]{TX_{\eps}^{#1} \cap L^2(X_\eps^0)}
\newcommand{\PXNN}[2]{\|#1\|_{\PXSN{#2}}}
\newcommand{\PXSNL}[2]{(TX_{\eps}^{#1} \cap L^2(X_\eps^0)({#2})}
\newcommand{\PXNNL}[3]{\|#1\|_{\PXSNL{#2}{#3}}}
\newcommand{\XSSoo}[1]{\rring{X}_\eps^{\raisebox{0ex}[0ex][0ex]{\hspace{0.1ex}$\scriptstyle #1$*}}}
\newcommand{\PXSSoo}[1]{T\XSSoo{k}}
\newcommand{\YS}[1]{Y_\eps^{#1}}
\newcommand{\YM}[2]{[#1]_{\YS{#2}}}
\newcommand{\YN}[2]{\|#1\|_{\YS{#2}}}
\newcommand{\YSL}[2]{\YS{#1}({#2})}
\newcommand{\YSoo}[1]{\rring{Y}_\eps^{\raisebox{0ex}[0ex][0ex]{\hspace{0.1ex}$\scriptstyle #1$}}}
\newcommand{\PYS}[1]{T\YS{#1}}
\newcommand{\PYN}[2]{\|#1\|_{\PYS{#2}}}
\newcommand{\PYSL}[2]{T\YSL{#1}{#2}}
\newcommand{\PYNL}[3]{\|#1\|_{\PYSL{#2}{#3}}}
\newcommand{\ZS}[1]{Z_\eps^{#1}}
\newcommand{\ZM}[2]{[#1]_{\ZS{#2}}}
\newcommand{\ZN}[2]{\|#1\|_{\ZS{#2}}}
\newcommand{\ZSL}[2]{\ZS{#1}(#2)}
\newcommand{\ZNL}[3]{\|#1\|_{\ZSL{#2}{#3}}}
\newcommand{\ZSoo}[1]{\rring{Z}_\eps^{\raisebox{-0ex}[0ex][0ex]{\hspace{0.1ex}$\scriptstyle #1$}}}
\newcommand{\ZNoo}[2]{\|#1\|_{\ZSoo{#2}}}
\newcommand{\ZMoo}[2]{[#1]_{\ZSoo{#2}}}
\newcommand{\ZSMoo}[1]{\rring{Z}_-^{\raisebox{0ex}[0ex][0ex]{\hspace{0.1ex}$\scriptstyle #1$}}}
\newcommand{\ZMMoo}[2]{[#1]_{\ZSMoo{#2}}}
\newcommand{\ZSP}[1]{Z^{#1}_+}
\newcommand{\ZNP}[2]{\|#1\|_{\ZSP{#2}}}
\newcommand{\DLL}{{\delta \mathcal L}}
\renewcommand{\medskip}{}
\newcommand{\roundup}[1]{\lceil #1 \rceil}                        % aufrunden
\newcommand{\EP}[1]{e^{\frac 12 \eps |#1| (1-v)}}
\newcommand{\EEP}[1]{K^{#1}_\eps}
\newcommand{\DI}[1]{\ d\hspace{-0.3ex}\Im (#1)}
\newcommand{\SP}{\sup_{v \in (0,1)}}
\newcommand{\lam}{\lambda}
\newcommand{\Ome}{\Omega}
\newtheorem{theorem}{Theorem}[section]
\newtheorem{proposition}[theorem]{Proposition}
\newtheorem{lemma}[theorem]{Lemma}
\newtheorem{corollary}[theorem]{Corollary}
\newcommand{\feps}{{\textstyle \frac 1\eps}}
\newcommand{\cnltL}[2]{\|#1\|_{\Re \lam = #2}}
\def \e{\eps} 
\def \l{\lambda}
\def \Lam{\Lambda}
\def \gam{\gamma}
\def \bet{\beta}
\def \EEEE{E}
\def \alp{\alpha} 
\numberwithin{equation}{section}
\begin{document}

\title[Well-posedness of Darcy's flow and  Lubrication approximation  ]
{Darcy's  flow  with prescribed contact angle -- Well-posedness
  and lubrication approximation}

%\titlerunning{Darcy flow}        % if too long for running head

\author{Hans Kn\"upfer} \address{Institut f\"ur Angewandte Mathematik,
  Universit\"at Bonn, Endenicher Strasse 60, 53111 Bonn, Germany, phone: +49 228 7362297} %
\email{hans.knuepfer@hcm.iam.uni-bonn.de}
\author{Nader Masmoudi} %
\address{Courant Institute, New York University, Mercer
  Street 251, New York, NY 10012, USA, phone: +1 212 998 3211}
\email{masmoudi@cims.nyu.edu}

\begin{abstract}
  We consider the spreading of a thin two-dimensional droplet on a solid
  substrate. We use a model for viscous fluids where the 
  evolution is governed by
  Darcy's Law. At the triple point where air and liquid meet the solid
  substrate, the liquid assumes a constant, non-zero contact angle ({\it partial
    wetting}). We show local and  global  well-posedness of this free boundary
  problem in the presence of the moving contact point.  Our estimates are
  uniform in the contact angle assumed by the liquid at the contact point. In
  the so-called lubrication approximation (long-wave limit) we show that the
  solutions converge to the solution  of a one-dimensional degenerate parabolic
  fourth order equation which belongs to a family of thin-film equations. The
  main technical difficulty is to describe the evolution of the non-smooth
  domain and to identify suitable spaces that capture the transition to the
  asymptotic model uniformly in the small parameter $\eps$.
\end{abstract}

\maketitle

\tableofcontents
\addtocontents{toc}{\protect\setcounter{tocdepth}{1}}.

\vspace{-9ex}
\section{Introduction and model} %\label{sec-intro}

In the past years, the theory of fluid systems in the presence of a free
boundary has been developed by many important works. Usually, in these problems,
the interface (or free boundary) separates two phases of the fluid system.
Among the large literature, such work has been addressed e.g. in
\cite{Lindblad05,Lannes05,CS07,SZ08,ZZ08,CCG11} for local existence results,
\cite{Wu11,GMS11prep} for global existence results, \cite{CCFGL12} for the study
of blow-up, \cite{AM05,MR12prep} for asymptotic limits.  In this paper, we are
interested in the situation of a fluid evolution in the presence of a contact
point where three phases meet, namely a contact point between air, liquid and
solid, see Fig. \ref{fig-darcy}. One example is the flow in a Hele-Shaw cell,
where the liquid touches the lateral boundary of the Hele-Shaw cell.  \medskip

The Hele-Shaw model describes the evolution of a liquid between the plates of a
Hele-Shaw cell.  In general, the surface tension-driven Hele-Shaw flow is given
by
\begin{align} \label{darcy-fullspace} \left \{
    \begin{aligned}
      &\Delta p \ = \ 0 \qquad\qquad && \text{in $\Omega(t)$}, \\
      &p \ = \ \gamma \kappa && \text{on $\partial \Omega(t)$}, \\
      &V \ = \ \nabla p \cdot n && \text{on $\partial \Omega(t)$},
    \end{aligned} 
  \right.
\end{align}
where the evolving domain $\Omega(t) \subset \R^2$ describes the region occupied
by the fluid. The velocity of the fluid is described by Darcy's Law $U = -
\nabla p$. In particular the normal velocity $V$ of the fluid interface is
described by $V = \nabla p \cdot n$. The parameter $\gamma$ describes the
surface tension between air and liquid. Next to its interpretation as the flow
in a Hele-Shaw cell, the fluid evolutions governed by Darcy's Law appear in a
wide range of physical models. One example is the flow of a liquid through a
porous medium, see \cite{Bear-Book}. Other situations which can be modeled by
\eqref{darcy-fullspace} are crystal growth or dissolution, directional
solidification or melting, electrochemical machining or forming
\cite{BenamarPelceTabeling-1991, Rubinstein-1971, McGeoughRassmussen-1974}. In
the last two decades, well-posedness of \eqref{darcy-fullspace} has been
investigated: Short-time existence and regularity of solutions of
\eqref{darcy-fullspace} have been proved in \cite{DuchonRobert-1984,
  KawaradaKoshigoe-1991, EscherSimonett-1997-1,EscherSimonett-1997-2} and
Prokert \cite{Prokert-1998}. Global existence for initial data close to the
sphere has been shown in \cite{ConstantinPugh-1993}. The case of zero surface
tension, $\gamma = 0$ has been considered e.g.  in
\cite{SiegelCaflishHowison-2004, Ambrose-2004}.

\medskip

Clearly, the normal component of the velocity is zero at the liquid-solid
interface.  We assume that the Hele-Shaw cell is described by the half-space $H
= \R \times (0,\infty)$. At the point where air, liquid and solid meet, we
assume that the liquid assumes a static (microscopic) contact angle. This
contact angle $\theta$ is determined by Young's Law \cite{Young-1805},
i.e. $\gam \cos \theta \ = \ \gamma_{SG}-\gamma_{SL}$ where the parameter
describe the surface tensions between the three phases: $\gamma$ (air, liquid),
$\gamma_{SL}$ (solid, liquid) and $\gamma_{SG}$ (air, solid).  This leads to the
following model:
\begin{align} \label{darcy} %
  \left \{
    \begin{aligned}
      &\Delta p \ = \ 0 \qquad && \text{in } \Omega(t), \\
      &p  \ =  \ \gam \kappa && \text{on } \partial \Omega(t)  \cap H, \\
      &p_y \ = \ 0 && \text{on } \partial \Omega(t)  \cap \partial H, \\
      &\gam \cos \theta \ = \ \gamma_{SG}-\gamma_{SL} \qquad\qquad&& \text{on
      } \partial(\partial \Omega(t) \cap \partial H),
    \end{aligned} 
  \right.
\end{align}
see Fig. \ref{fig-darcy}. The evolution then can also be interpreted as the
spreading of a droplet on a plate. A well-posedness result for \eqref{darcy} in
H\"older spaces has been given by Bazalyi and Friedman in
\cite{BazaliyFriedman-2005-1,BazaliyFriedman-2005-2}.  However, in their
analysis the conditions on the initial data are too restrictive to allow for
movement of the triple point (and thus for spreading of the droplet).  Our first
main result is a well-posedness result for this free boundary problem in a much
wider class of weighted Sobolev spaces. In particular, our result seems to be
the first result which allows for movement of the triple point.

\medskip

\begin{figure}
  \centering
  \includegraphics[width=8cm]{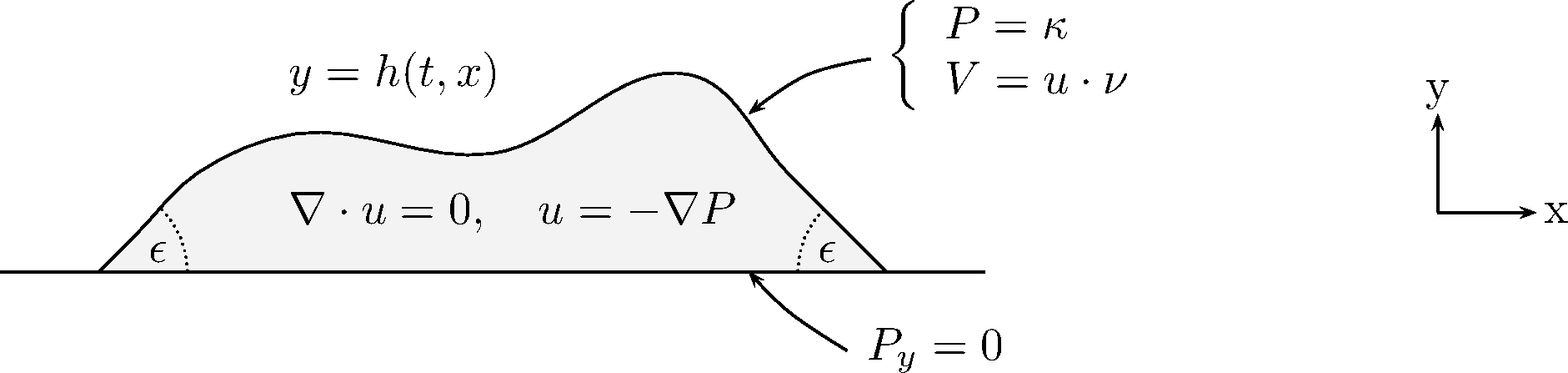}
  \vspace{-2ex}
  \caption{Darcy flow on solid substrate}
  \label{fig-darcy}
\end{figure}

The second aim of this work is to show the convergence of solutions to a reduced
model in the so called lubrication approximation regime or long wave
approximation. More precisely, let us assume that typical vertical length scales
are of order $\eps$ while horizontal length scales are of order $1$. In
particular, the angle assumed at the contact point is of order $\eps$:
\begin{align} \label{def-eps} %
 \frac{\eps^2}{2} \ \simeq \ \frac{\theta^2}{2} \ \simeq \ 1-   \cos \theta   \ %
 = \ \frac {\gamma + \gam_{SL}-\gam_{SG}}{\gam}.
\end{align}
The limit model is a special form of the  thin-film equation. Assuming that the
height of the droplet is described by the graph $h(t,x)$, the evolution is given
by
\begin{align} \label{tfe} \left \{
    \begin{aligned}
      &h_t + \gam (h h_{xxx})_x = 0 \qquad && \text{in } \{ h > 0 \}, \\
      &h = 0, \quad |h_x| = \eps, \qquad && \text{on } \partial \{ h > 0 \}, \\
      &\tilde V = \gam h_{xxx} \qquad && \text{on } \partial \{ h > 0 \}
    \end{aligned}
  \right.
\end{align}
and where $\tilde V$ is the velocity of the moving contact points $\partial \{ h
> 0 \}$. Formal derivations of lubrication models of type \eqref{tfe} have been
e.g. given in \cite{Reynolds-1886}. We prove convergence of solutions of
\eqref{darcy} to solutions of \eqref{tfe}. This is the first rigorous
lubrication approximation in the case of partial wetting (non-zero contact
angle). Furthermore, it is the first convergence result in the framework of
classical solutions. A rigorous lubrication approximation in the framework of
weak solutions has been done by Giacomelli and Otto
\cite{GiacomelliOtto-2003}. Their approach is quite different to ours: In
particular, their result does not include well-posedness for the initial
model. Instead, the authors prove convergence to the limit model by only minimal
energy bounds, assuming existence of smooth solutions. In particular, the bounds
derived in \cite{GiacomelliOtto-2003} do not capture the slope of the profile at
the contact point. Indeed, the techniques used in \cite{GiacomelliOtto-2003} do
not seem to be applicable for the case of a non-zero contact angle at the moving
contact line as considered in this work.

\medskip

The main work lies in the derivation of bounds which are uniform in the
parameter $\eps > 0$. We give a short sketch of the strategy of our proof: In
order to perform the transition from \eqref{darcy} to \eqref{tfe}, we first
express \eqref{darcy} as a nonlocal evolution problem in terms of the profile
function $h(t,x)$. The corresponding equation can be seen as a nonlocal
parabolic evolution problem of third order. Equation \eqref{tfe} on the other
hand is a local fourth order degenerate parabolic equation. As the considered
models are higher order equations, the maximum principle cannot be used. Instead
we rely on their dissipative structures. Indeed, solutions of \eqref{darcy}
satisfy
\begin{align} \label{E-darcy} %
  \ddt \big( (|\partial \Omega \cap H|) - \alpha |\partial \Omega
  \cap \partial H) \big) \ = \ - \frac 1\gam \iint_{\Omega(t)} |\nabla p|^2 \
  dxdy,
\end{align}
where $\alp = \cos (\arctan \eps)$, see e.g. \cite{GiacomelliOtto-2001}. The
dissipation relation for solutions of \eqref{tfe} is
\begin{align} \label{E-tfe} %
  \ddt \big( \int_{\R} h_x^2 \ dx + \eps^2 |\{ h > 0 \}| \big) \ = \ - \gam
  \int_\R h h_{xxx}^2 \ dx.
\end{align}
One of the core issues of  the analysis is to find suitable norms which allow
for uniform bounds in the limit $\eps \to 0$. In \cite{KnuepferMasmoudi-2012a}, we
have investigated the linearizations of \eqref{darcy} and \eqref{tfe}. The
analysis in this work suggests to use sums of weighted Sobolev norms of the 
  type
\begin{align} \label{norm-intro} %
  \XM{f}{k} \ %
  &= \ \inf_{f=f_+ + f_-} \big( \nlt{x^{k+\delta} \partial_x^{4k+\delta} f_+} +
  \nlt{{\textstyle (\frac 1\eps)^{k}} x^\delta \partial_x^{3k+\delta} f_-}
  \big).
\end{align}
where $f = h_x - 1$ and where $k, \delta \geq 0$. In the limit $\eps \to 0$,
this norm turns from a sum of weighted Sobolev norms of order $4k+\delta$ and
$3k+\delta$ to a weighted Sobolev norm of order $4k+\delta$ (first term on the
right hand side of \eqref{norm-intro}).  This transition in the character of the
norm is reflected by a transition in the character of the equation: In the limit
$\eps \to 0$, the model (understood as an evolution equation for the profile
function, cf. \eqref{darcy-4}, \eqref{tfe-2}) changes
\begin{itemize}
\item from a third order to a fourth order evolution equation and
\item from a non-degenerate parabolic to a degenerate parabolic equation.
\end{itemize}
In particular, we will use norms of type \eqref{norm-intro} with $\delta =
1$. This seems to be the smallest integer value that is sufficient to control
the nonlinearity of the problem (the norms \eqref{norm-intro} are stronger for
larger $\delta$ as follows from Hardy's inequality). We also need to choose
norms which control the pressure $p$. As we will see, the norms for the pressure
do not have a real space representation, but are rather described in terms of
the Mellin transformed function. In fact, the choice of suitable norms for the
pressure turns out to be delicate in order to obtain uniform bounds in the small
$\eps$ parameter. We use radial variables with respect to the moving contact
point at the origin of the coordinate system. The norms, we use are weighted
Sobolev norms of supremum-type in the angular direction (in real space
variables) and of $L^2$-type in radial direction (in frequency variables).

\medskip

{\it Structure of the paper:} In Section \ref{sec-setting}, we transform the
problem on a fixed domain and we define the norms to control the profile and the
pressure. The main results of this work are stated and discussed in Section
\ref{sec-result}. In Section \ref{sec-overview}, we give an overview for the
proofs of the main theorems. In Section \ref{sec-estimates}, we prove estimates
for weighted spaces. In Section \ref{sec-wedge}, we derive estimates for the
nonlinear operator for a droplet supported in half-space. In Section
\ref{sec-local}, we derive corresponding localized estimates for compactly
supported droplets.

\section{Setting and norms} \label{sec-setting} %

By a change of dependent and independent coordinates, we reformulate the problem
on a fixed domain. We then formulate \eqref{darcy} as a nonlocal evolution
equation in terms of the profile function $h$. We also introduce norms to
control profile and pressure.

\subsection{Transformation onto a fixed domain} \label{ss-trafo} %

We define the nonlinear operator $\BB^h$ of Dirichlet-Neumann type by
\begin{align} \label{B-phys} % 
  \BB^h \kappa(x)  \ = \ \sqrt{1 + h_x^2} \ \left ( \partial_{n} p
  \right)_{|y = h(t,x)},
  &&\text{where}&& %
 \left \{
    \begin{array}{ll}
      \Delta p = 0  \ &\text{in  $\Omega(t)$} , \\
      p = \gam \kappa  &\text{on  $\partial_1 \Omega(t)$},  \\
      p_y = 0  &\text{at  $\partial_0 \Omega(t)$},
    \end{array} 
  \right.
\end{align}
where $\partial_1 \Omega(t) := \partial \Ome(t) \cap \{ y > 0 \}$ and
$\partial_0 \Ome(t) := \partial \Omega(t) \cap \{ y = 0 \}$ With the assumption
that the free boundary moves with the velocity, we have $h_t = w - v h_x$, where
$u = (v,w)$ is the velocity of the liquid. Suppose that the support of the
droplet stays an interval for some time, i.e. $\supp h(t) = (s_-(t),s_+(t))$ for
$t \in (0,\tau)$.  The evolution \eqref{darcy} can then be equivalently written
as nonlocal evolution for the profile $h$ by
\begin{align} \label{darcy-2} h_t + \BB^h \kappa = 0, \qquad && \text{for } x
  \in (s_-(t), s_-(t))
\end{align}
with boundary conditions $h_{|x=s_\pm} = 0$, $|h_{x|x=s_\pm}| = \eps$ and $\dot
s_{\pm}(t) = - \kappa_{x|x=s_\pm}$. By \eqref{B-phys} and since $|h_x| = \eps$ at the
triple point, the movement of the triple point is given by
\begin{align} \label{bc-kx} %
  \partial_t s_{\pm} = - p_{x|x=s_\pm} = - \gam \kappa_{x|x=s_\pm} \ %
  = \gam \Big((1+\eps^2)^{-\frac 32} \ h_{xxx} - 3\eps(1+\eps^2)^{-\frac
      52} \ h_{xx}^2 \Big)_{|x=s_\pm}.
\end{align}
Indeed, \eqref{bc-kx} follows since $p(t,x,h(t,x) ) = \kappa(t, x) $ and hence $
p_x + h_x p_y = \kappa_x $ at $x=s(t)$ and we conclude since $p_y=0$ for
$y=0$. We will assume that the support of $h$ at initial time is given by
$(0,1)$.  We next rescale time and space to get $\OO(1)$ quantities; furthermore
we fix the position of the moving contact point $s(t)$ by using moving
coordinates: We set
\begin{align} \label{def-MD} %
  M(t) \ = \ \tfrac 12 (s_+(t) + s_-(t)), && %
  D(t) \ = \ s_+(t) - s_-(t).
\end{align}
We introduce the new variables $(\tilde x, \tilde y, \tilde t)$ by
\begin{align} \label{newcoord} %
  \tilde x \ = \ \frac {x-M(t)}{D(t)} + \frac 12, && %
  \tilde y \ = \ \frac y{\eps D(t)}, && %
  \tilde t \ = \ \eps \gam \int_0^t \frac 1{D^{2}(s)} \ ds,
\end{align}
cf. \cite{Angenent-1988-1, GiacomelliKnuepfer-2010}.  In particular, $\partial_x
\tilde x = \frac 1D$, $\partial_x \tilde t = 0$, $\partial_t \tilde t = \frac
{\eps \gam}{D^2}$ and
\begin{align*}
  \partial_t \tilde x \ %
  &= \ - \frac {M_t}D - \frac{D_t (x-M)}{D^2} %
  %\lupref{newcoord}= - \frac 1D
  %\big(M_t + (\tilde x - {\frac 12}) \ D_t \big) %
   \lupref{newcoord}= - \frac {\eps \gam }{D^3} \big(\dot M + (\tilde x -
   {\frac 12}) \ \dot D \big) 
  \lupref{def-MD}= \ - \frac {\eps \gam }{D^3} \big((1- \tilde x) \dot s_- + x
  \dot s_+ \big),
\end{align*}
where the dot denotes differentiation in $\tilde t$. The dependent quantities
$\tilde h$ and $\tilde p$ are defined by
\begin{align}
  h(t,x) = \eps D(t) \tilde h(\tilde t, \tilde x), && %
  D(t) p(t,x,y) = \eps \gam \tilde p(\tilde t, \tilde x, \tilde y).
\end{align}
The transformed evolution --- after multiplication by $D^2/\eps \gam$ --- is
given by
\begin{align} \label{def-LL} %
  \big(D \tilde h \big)_{\tilde t} - %
  \big((1- x) \dot s_- + x \dot s_+ \big) \ h_{\tilde x} %
  - \BB_\eps^{\tilde h} \big( (1 + \eps^2 {\tilde h}_{\tilde x}^2)^{-\frac 32}
  \tilde h_{\tilde x \tilde x} \big) \ = \ 0 && \text{for $x \in (0,1)$}
\end{align}
with boundary conditions $h = 0$ and $|h_x| = 1$ at $x = 0,1$. The operator
$\BB_\eps^h$ is given by
\begin{align} \label{def-bt} %
  \BB_\eps^{\tilde h} \eta (\tilde x) \lupref{B-phys}= \left( -  {\tilde h}_{\tilde x}
    \tilde p_{\tilde x} + (\feps)^2 {\tilde p}_{\tilde y} \right)_{|
    \tilde y= \tilde h(\tilde t, \tilde x)}, && %
  \text{where} && %
  \left \{
    \begin{array}{ll}
      \Delta_\eps  \tilde p = 0 \quad &\text{in } \tilde \Omega, \\
      \tilde p = \eta &\text{on } \partial_1 \tilde \Omega, \\  
      {\tilde p_{\tilde y}} = 0 &\text{on } \partial_0 \tilde \Omega
    \end{array} 
  \right.
\end{align}
and where $\Delta_\eps = \partial_{x}^2 + {\textstyle (\frac
  1\eps)^2} \partial_{y}^2$. Here, $\tilde \Ome = \{ ((\tilde x, \tilde y) :
\tilde x \in (0,1), \tilde y \in (0,\tilde h(t,x))) \}$. We will also use the
notation $\tilde \Gamma = \partial_1 \tilde \Omega$ for the air-liquid interface
and $\Gamma_0 = \partial_0 \tilde \Omega = (0,1)$ for the liquid-solid
interface. By Proposition \ref{prp-p-nonlinear}, $\BB_\eps^h$ is well-defined.
In the following, we skip the tilde's in our notation. Let $h^* = x(1-x)$ and
let $f^* = h^*_x = 1-2x$.  Note that $h^*$ is an approximation of the stationary
solution for the Darcy flow which is the half-circle. We set $f = (h - h^*)_x =
h_x - f^*$. We also use the notation $B_\eps^f := \BB_\eps^h$. This yields the
following evolution model, defined on the fixed domain $Q_{\tau} := (0,\tau)
\times (0,1)$,
\begin{align} \label{def-LL1} %
  \left \{
  \begin{aligned}
    &\LL_\eps f := (D f)_t - %
    \Big[ \big((1- x) \dot s_- +
  x \dot s_+ \big) (f+f^*) + B_\eps^f
    \Big( \frac{f_x+f_x^*}{(1 +  \eps^2 (f-2)^2)^{\frac 32}} \Big)
    \Big]_x = 0, \\% &&\hspace{-1ex}\text{in } (0,1), \\
    &f = 0 \hspace{9ex}\text{at } x = 0,1, \\
    &\int_0^1 f \ dx = 0.
  \end{aligned}
  \right.
  \hspace{-4ex}
\end{align}
The contact point positions $s_\pm$ are given by $s_-(0) = 0$, $s_+(0) = 1$
together with the ODE
\begin{align} \label{speed-local} %
  \dot s_\pm(t) \ \lupref{bc-kx}= \ \Big( (1+\eps^2)^{-\frac 32} \ f_{xx} -
  3\eps^3(1+\eps^2)^{-5/2} \ (f_x+2)^2 \Big)_{|x=0,1};
\end{align}
$D(t)$ is accordingly defined by \eqref{def-MD}. We have transformed the
equation onto a fixed domain at the cost of the non-local operator $B_\eps^f$ as
and the nonlocal terms $s_\pm$, $D$.  The analogous transformations for the
thin-film equation \eqref{tfe} yield
\begin{align} \label{tfe-fixed} %
  \left \{
    \begin{aligned}
      &\LL_0 f := (Df)_t - \Big[ \big((1 - x) \dot s_- + x \dot s_+ \big) (f +
      f^*) %
      + \Big( \Big(\int_0^x f d\tilde x \Big)  f_{xxx} \Big)_x \Big]_{x} = 0, \\
      &f = 0, \hspace{9ex} \text{at } x=0,1, \\
      &\int_0^1 f \ dx = 0.
    \end{aligned}
  \right.
\end{align}
For $\eps = 0$, the functions $\dot s_\pm$ are defined by $s_-(0)=0$ and
$s_+(0)=1$ together with the ODE $\dot s_\pm(t) = f_{xx|x=0,1}$; $D(t)$ is
defined accordingly by \eqref{def-MD}.

\medskip

Due to the degeneracy of the evolution equation at the free boundary, special
attention needs to be directed at the boundary conditions: The above
transformations are such that the boundary conditions $h = 0$, $|h_x| = \eps$
respectively, are equivalent to the integral/boundary conditions $\int f = 0$,
$f = 0$ respectively for the transformed function $f$. In the subsequent part of
this work, we construct $f$ by a Lax-Milgram argument such that $f = 0$ indeed
holds at the boundary. The integral condition $\int f dx = 0$ holds
automatically for sufficiently smooth solutions. Indeed, suppose that $\int f dx
= 0$ is satisfied initially. For the Darcy flow, with the analogous calculation
as for \eqref{bc-kx}, it follows that
\begin{align*}
  \ddt \int_0^\infty D f \ dx  \ %
  &= \ \ - \Big[ \big((1- x) \dot s_- +
      x \dot s_+ \big) (f + f^*) - B_\eps^f
  \Big( \frac{f_x-2}{(1 +  \eps^2 (f+f^*)^2)^{\frac 32}} \Big)
  \Big]_0^1 \\
  &= \ \  \dot s_+ - \dot s_- - \dot s_+ + \dot s_- \ %
  = \ 0.
\end{align*}
Therefore, $\int Df dx = \int D f_{\rm in} dx = 0$ for all times (as long as $D
\neq 0$) and hence also $\int Df dx= \int D f_{\rm in}dx = 0$. An analogous
calculation applies also for the thin-film equation. Note that instead of
proposing a fixed contact angle which implies the speed of propagation, derived
in \eqref{bc-kx}), another option would be to consider a model with dynamic
contact angle while imposing a law relating contact angle and speed of
propagation, see e.g.  \cite{RenHuE-2010}.

\medskip

{\it The case of an infinite wedge. } Near the moving contact lines, the region
occupied by the liquid approximately has the shape of a wedge. This motivates to
linearize the evolution equation around an infinite wedge. We hence assume that
$h(t,x) \approx \eps (x - s(t))$ for $s(t) \in R$. We describe how the problem
is transformed onto the wedge, a more detailed derivation is given in
\cite{KnuepferMasmoudi-2012a}. Analogously to \eqref{newcoord}, the new variables
are defined by
\begin{align} \label{coord-global} %
  x - s(t) \ = \ \tilde x, \quad y \ = \ \eps \tilde y, \quad t \ = \ {\textstyle \frac{\tilde
      t}\eps} \quad \text{ and } \quad p \ = \ \eps \tilde p, \quad h \ = \ \eps
  \tilde h.
\end{align}
Correspondingly to \eqref{def-LL}, we get
\begin{align} \label{darcy-3} %
  {\tilde h}_{\tilde t} -  \dot s \tilde h_{\tilde x} + \BB_\eps^{\tilde h}
  \big((1 + \eps^2 \tilde h_{\tilde x}^2)^{-\frac
    32} \tilde h_{\tilde x \tilde x} \big) \ = \ 0 \qquad
  && \text{for } \tilde x \in (0, \infty).
\end{align}
in the following, we omit the '$^\sim$' in the notation.  Here, $s(t)$ is
defined as in \eqref{bc-kx}. We set $f := h_x - 1$. Taking one spatial
derivative, of \eqref{darcy-3}, we get
\begin{align} \label{we-get} %
  f_t - \big[ \dot s f + \BB_\eps^h \big( f_x (1+\eps^2(1+f)^2)^{-\frac 32}
  \big) \big]_x  \ = \ 0
\end{align}
with the single boundary condition $f_{|x=0} = 0$. We introduce the notation
$K^f = \{ (x, y): x > 0, 0 < y < h \}$, $\Gamma^f = \{ (x, y): x > 0, y = h
\}$. The linear operator $B_\eps := B_\eps^0$ is given by
\begin{align} \label{def-q} %
  B_\eps \eta(t,x) \ = \ \left( - q_x + (\feps)^2 \
    q_y \right)_{|y=x},
  &&\text{where}&&
  \left \{
    \begin{array}{ll}
      \Delta_\eps  q = 0 \quad &\text{ in } K, \\
      q = \eta  &\text{ on } \partial_1 K, \\  
      q_{\tilde y} = 0 &\text{ on } \partial_0 K,
    \end{array} 
  \right.
\end{align}
where $K := K^0$ and $\Gamma := \Gamma^0$.  Equation \eqref{we-get} can then be
equivalently expressed as
\begin{align} \label{darcy-4} 
  \left \{
    \begin{aligned}
      &f_t + \ A_\eps f = N_\eps(f) && \text{for } x \in (0, \infty), \\
      &f = 0 && \text{for } x = 0.
    \end{aligned}
    \right.
\end{align}
The main (linear) part of \eqref{we-get} is given by the operator
\begin{align} \label{def-A} 
  A_\eps \ := \ - \ (1 + \eps^2)^{-\frac 32} \ \partial_x B_\eps \partial_x.
\end{align}
The remaining terms in \eqref{we-get} are combined in the nonlinear operator
$N_\eps(f) = N_\eps(f,f)$,
\begin{align} \label{def-N} %
  N_\eps(\phi, f) &= \ \frac{\phi_x}{(1+\eps^2)^{\frac 32}} \Big(\frac{f_{xx} -
    3\eps^2 f_x^2}{1+\eps^2} \Big)_{|x=0} + \partial_x B_\eps^\phi \Big(
  \frac{f_x}{(1+\eps^2(1 + f)^2)^{\frac 32}}\Big) + A_\eps f.
\end{align}
The first term on the right hand side of \eqref{def-N} is related to the
movement of the triple point. The second and third term sum describe the error
which appears by replacing the domain $K^\phi$ by $K$ and by replacing the
curvature with $f_x$. Analogously, for the thin-film equation we apply the
coordinate transform $\tilde x = x - s(t)$. In the new coordinates, we obtain
\begin{align} \label{tfe-2}
  \left \{
    \begin{aligned}
      &f_t + \ A_0 f = N_0(f) \qquad && \text{for } x \in (0, \infty), \\
      &f = 0 && \text{for } x \in (0, \infty).
    \end{aligned}
    \right.
\end{align}
The linear and nonlinear part of the equation are given by
\begin{align} \label{def-A0} 
  \begin{aligned}
    A_0 f \ := \ (x f_{xx})_{xx},  
    &&N_0(\phi, f) \ := \ - \big( {\textstyle \int_0^x} \phi \ dx \ (f_{xx} -
    f_{xx|x=0}) \big)_{xx}
  \end{aligned}
\end{align}
and $N_0(f) = N_0(f,f)$. We also write $A_0 = - \partial_x
B_0 \partial_x$ and $B_0 = - \partial_x x \partial_x$.  Observe that
$N_0(\phi, f)$ is bilinear, while $N_\eps(\phi, f)$ is neither linear
in the first nor in the second argument. Also notice that
\eqref{tfe-2} has the scaling invariance $(x,t,f) \mapsto (\lambda x,
\lambda^3 t, f)$.

\subsection{Norms for the profile} \label{ss-X}

The initial problem \eqref{darcy} is non--degenerate parabolic on a non-smooth
moving domain, the limit problem \eqref{tfe} is degenerate parabolic on a smooth
domain. We use weighted Sobolev type spaces to capture the transition between
these two problems. Weighted spaces for the analysis of elliptic operators on
non-smooth domains have e.g. been used in
\cite{KozlovMazyaRossmann-Book}. Weighted spaces have also used to analyze
degenerate parabolic equations, see e.g.
\cite{DaskalopoulosHamilton-1998,Koch-1999,GiacomelliKnuepferOtto-2008}. Our
analysis connects these two applications of weighted spaces.

\medskip

Let $\EEEE = (0,1)$, $\EEEE = (0,\infty)$ or $\EEEE = (-\infty, 0)$ and let
$d(x) = \dist(x,\partial \EEEE)$. For $k \in \N$, our norms are given as the sum
of two weighted Sobolev norms:
\begin{gather} \label{norm-integer} %
  \XML{f}{\ell}{E} \ = \ \inf_{f=f_+ + f_-}
  \big(\nltL{d^{\ell+1} \partial_x^{4\ell+1} f_+}{E} %
  + \nltL{\frac {d}{\eps^\ell} \partial_x^{3\ell+1} f_-}{E} \big).
\end{gather}
In particular, in the limit $\eps \to 0$ the homogeneous norm in
\eqref{norm-integer} turns from a norm of order $3l+1$ to a norm of order
$4l+1$.  We furthermore set
\begin{align} \label{norm0ganz} % 
  \XNL{f}{k}{E}^2 \ = \ \sum_{\ell=0}^k \XML{f}{\ell}{E}^2.
\end{align}
We recall Hardy's inequality which holds for all $\bet \neq -\frac 12$ and all
$f \in \cciL{(0,\infty)}$:
\begin{equation} \label{hardy} \nltL{x^{\beta} f}{(0,\infty)} \ \leq \
  C_\beta \nltL{x^{\beta+1} f_x}{(0,\infty)}.
\end{equation}
In particular, for fixed $\eps > 0$ the second term on the right hand side of
\eqref{norm-integer} is estimated by the first one.

\medskip
 
Our estimates require a generalization of the above norms to the case of
fractional derivatives.  This generalization will be done with help of the
Mellin transform. This transform has been widely used for elliptic boundary
problems on conical domains (e.g. \cite{KozlovMazyaRossmann-Book}). For any $f
\in \cciL{(0,\infty)}$, its Mellin transform $\hat f$ is
\begin{align} \label{mellin} %
  \hat f(\lambda) \ = \ \int_{0}^\infty x^{-\lambda} f(x) \ \frac{dx}x \ %
  = \ \int_{\R} e^{-\lambda u} F(u) \ du.
\end{align}
Here and in the following we will frequently use the variables $u = \ln x$ and
$F(u) = f(x)$. By \eqref{mellin}, application of the Mellin transform on $f$
corresponds to application of the two-sided Laplace transform on $F$.  It is
easy to see that $\widehat{x f_x}(\lambda) = \lambda \hat f(\lambda)$ and
$\widehat{x^{-\beta} f}(\lambda) = \hat f(\lambda + \beta)$ for any $\beta \in
\R$. Furthermore, Plancherel's identity holds
\begin{align} \label{plancherel} %
  \nltL{x^{-\beta} f}{(0,\infty), \frac {dx}x} \ = \ \nltL{e^{-\beta u} F}{\R, du} \
  = \ \nltL{\hat f}{\Re \lambda = \beta}.
\end{align}
The {\it strip of convergence} is the set of $\lambda \in (\beta_1,\beta_2)
\times \R \subset \C$ where the integrand in \eqref{mellin} is absolutely
convergent. Note that if $f $ is such that $x^{-\beta_1} f $ and $x^{-\beta_2}
f$ are in $L^2((0,\infty), \frac{dx}x)$ for some $\beta_1<\beta_2$, then the
strip of absolute convergence contains the interval $(\beta_1,\beta_2)$ as can
be seen by applying H\"older's inequality.  For any $\beta$ in the strip of
convergence of $f$, the inverse Mellin transform of $f$ is
\begin{align} \label{mellin-inverse} %
  f(x) \ = \ \int_{\Re \lambda = \beta} x^\lambda \hat f(\lambda) \DI{\lambda} \
  = \ \int_{\Re \lambda = \beta} e^{\lambda u} \hat F(\lambda) \DI{\lambda},
\end{align}
where the line integral is taken in direction of increasing $\Im \lambda$. The
definition \eqref{mellin-inverse} does not depend on the choice of $\beta \in
(\beta_1, \beta_2)$ since $\hat f$ is analytic in the strip of convergence.

\medskip

We are ready to give a definition of the norm in terms of the Mellin
transform. The definition of the norm by Mellin transform and the definition by
\eqref{norm-integer} differ by a constants $C_k$ for integer $k$. In our
notation we do not differentiate between the two definitions of the norms. This
does not change the result since all out estimates depend on constants $C_k$. In
order to apply the Mellin transform, we first need to subtract the boundary
data. Consider $f \in \cciL{\overline \EEEE}$, where $\EEEE = (0,1)$ or $\EEEE =
(0,\infty)$, i.e. $f$ vanishes for $x \to \infty$.  Let $\zeta \in
\cciL{[0,\infty)}$ be a smooth cut-off function satisfying $\zeta = 1$ in
$[0,\frac 18]$, $\zeta = 0$ in $[\frac 14,\infty)$ and $\nco{D^k \zeta} \leq
C_k$ for all integer $k$.  Moreover, we assume that $\zeta(x) + \zeta(1-x) = 1 $
for all $x \in [0,\infty)$.  If $E = [0,1]$ then we set $f^L(x) = f(x) \zeta(x)$
and $f^R(x) = f(1-x) \zeta(x)$, in particular $\supp f^{L}, \supp f^{R}
\subseteq [0,\frac 34]$ and $f^L(x) + f^R(1-x) = f(x)(\zeta(x) + \zeta(1-x)) =
f(x)$. We now define for $k \geq 0$
\begin{align} \label{norm-E} %
  \XNL{f}{k}{[0,1]} \ := \ \XNL{f^L}{k}{(0,\infty)} + \XNL{f^R}{k}{(0,\infty)}.
\end{align}
It remains to define the norm for $E = (0,\infty)$: Given $k \geq 0$, let $n_k$
be the largest integer smaller than $3k - \frac 12$, i.e. $n_k =
\roundup{3k-\frac 32}$.  In particular, if $k \in \N_0$ then $n_k = 3k-1$. Let
$\PP_f$ be the Taylor polynomial of order $n_k$ of $f$ at $x=0$ (if $n_k =-1$,
then we choose $\PP_f = 0$).  We decompose $f = f_1 + f_0$, where $f_1 = \zeta
\PP_f$ and define
\begin{align} \label{norm} %
  \XNL{f}{k}{(0,\infty)} \ = \ \NNN{\PP_f}{} + \nlt{f_0} + \XM{f_0}{k},
\end{align}
where $\NNN{\cdot}{}$ is any fixed polynomial norm, e.g. the $\ell^2$-norm of
the coefficients. Here, the homogeneous norm $\XM{\cdot}{k}$, $k \geq 0$, is
given by
\begin{align} \label{norm-lambda} %
  \XM{f_0}{k} \ = \ \nltL{|\lambda|^{3k+1} \mu^{k} \hat f_0}{\Re \lambda = 3k
    -\frac 12}
\end{align}
with the notation $\mu = \min \{ |\lam|, \feps \}$. The equivalence of these
norms with the characterization \eqref{norm-intro} when $k$ is an integer
follows by application of \eqref{plancherel} and by repeated application of
Hardy's inequality; the proof is given in \cite{KnuepferMasmoudi-2012a}. We define
$\XS{k}$ as completion of $\ciiL{E}$ with respect to \eqref{norm}. The notation
$\XSLo{k}{E}$ indicates that the completion is taken in the subspace of
$\ciiL{E}$ where additionally $f = 0$ on $\partial \EEEE$. Note that the trace
of $f$ is controlled in $\XSL{k}{\EEEE}$ if $k > \frac 16$, in particular
$\XS{k} \neq \XSo{k}$ for $k > \frac 16$. Finally, we define $\XSoo{k}$ as the
completion of $\cciL{E}$ with respect to \eqref{norm-E}-\eqref{norm}. We will
also use corresponding parabolic norms and spaces: Generically, the norms are
defined for $ (t,x) \in [0,\infty) \times E =: Q$. For $0 \leq \eps \leq 1$, $k
\in \N_0$, we define
\begin{align} \label{def-PXN} %
  \PXNL{f}{k}{Q}^2 %
  = \sum_{1 \leq i+j \leq k} \TXNL{\partial_t^i f}{j}{Q}^2 + \sum_{0 \leq
      i+j \leq k-1} \CXNL{\partial_t^i f}{j+\frac 12}{Q}^2 +
  \NNN{f}{C_t^0 L_x^2(Q)}^2;
\end{align}
the corresponding spaces are called
$\PXS{k}$, $\PXSo{k}$, $\PXSoo{k}$, where as before the superscript `$^{\rm o}$'
indicates that the function also vanishes at the boundary $x=0$ and the
superscript `$^{\rm oo}$' denotes the space obtained by taking the closure of
$\cciL{[0,\infty) \times (0,\infty)}$.  If the domain of integration is $Q =
[0,\infty)^2$ or $E = [0,\infty)$, we sometimes omit the domain in the notation
of space and norm, i.e. we write $\XS{k}$ for $\XSL{k}{(0,\infty)}$ etc.

\medskip

Note that the choice of norms \eqref{norm-lambda} is supported by the
investigation of the linear operator in \cite{KnuepferMasmoudi-2012a}. In
particular, by \cite[Theorem 3.2]{KnuepferMasmoudi-2012a} for $k, \geq 0$, $\eps
\in [0, \frac {\pi}{3(2k+1)})$, we have
\begin{align} \label{est-MR-hom} %
  c\XM{A_\eps f}{k} \ \leq \ \XM{f}{k+1} \ \leq \ C\XM{A_\eps f}{k}.
\end{align}

\subsection{Norms for the pressure} \label{ss-Y}

We introduce norms and spaces to control the pressure $q$
(cf. \eqref{def-q}). Unlike the spaces $\XS{k}$, which can be expressed in both
real variables and Mellin variables (cf. \eqref{norm}), our norms for the
pressure can only be expressed in terms of Mellin variables. Roughly speaking,
we apply the Mellin transform in the radial direction (with respect to the tip
of the wedge), but not in the angular variable. The norm to control the pressure
is $L^\infty$ in the radial frequency variables and $L^\infty$ in angular
variables. Using the $L^\infty$-norm in angular variables enables us to obtain
estimates which are optimal in $\eps$. A standard approach using an
$L^2(L^2)$-norm in the pressure would not capture the optimal $\eps$-dependence
which is needed for the convergence to the limit model. There are several
technical difficulties connected with the fact that we have to take the supremum
in $v$. One of them is that the norms cannot be expressed in terms of physical
variables. Moreover, complex interpolation as is possible for the trace norm
(see Lemma \ref{lem-real}) cannot be directly used for the norms for the
pressure.
 
\medskip

We first define the space $\YS{k}$ on the wedge. We introduce a coordinate
transform which maps the wedge onto an infinite strip: We define the new
variables $(t,s)$ by
\begin{align} \label{def-ts} 
  \left \{
  \begin{array}{ll}
    x = e^u \cos(\eps v), \\ 
    y = \frac 1\eps e^u \sin(\eps v), 
  \end{array}
  \right. %
  && i.e \quad
  \begin{pmatrix}
    \frac{\partial x}{\partial u} & \frac{\partial x}{\partial v} \\ 
    \frac{\partial y}{\partial u} & \frac{\partial y}{\partial v}
  \end{pmatrix}
  \ =  \ 
  \begin{pmatrix}
    e^u \cos(\eps v) & - \eps e^u \sin(\eps v) \\
    \frac 1{\eps} e^u \sin(\eps v) &  e^u \cos(\eps v)
  \end{pmatrix}.
\end{align}
For later reference, we note that $dxdy = e^{2u}du dv$ and
\begin{align} \label{eq-ts} 
    &\partial_u = x \partial_x + y \partial_y, 
    &&\feps \partial_v = - \eps y
    \partial_x + \feps x\partial_y,  \\
    &\partial_x = e^{-u} \cos(\eps v) \partial_u - \feps e^{-u} \sin(\eps
    v)
    \partial_v, &&\feps\partial_y = e^{-u} \sin(\eps v) \partial_u -
    \feps e^{-u} \cos(\eps v) \partial_v. \notag
\end{align}
The coordinate transform \eqref{def-ts} can be understood as sequence of
the two transformations $(x, \eps y) = r (\cos \theta, \sin \theta)$ and
$(r, \theta) = (e^u, \eps v)$, see Fig. \ref{fig-trafo}.
\begin{figure}
  \centering
  \begin{minipage}{0.32\linewidth}
    \centering
    \includegraphics[width=2.1cm]{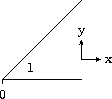}
  \end{minipage}
  \begin{minipage}{0.15\linewidth}
    \centering
    \includegraphics[width=1.7cm]{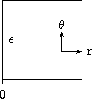}
  \end{minipage}
  \begin{minipage}{0.32\linewidth}
    \centering
    \includegraphics[width=1.7cm]{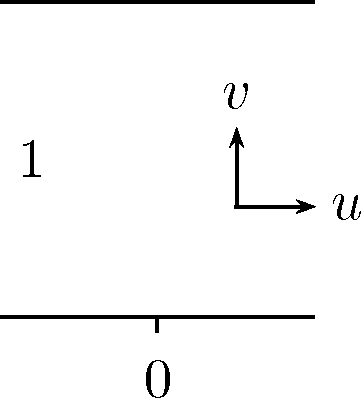}
  \end{minipage}
  \caption{Transformation from wedge to strip.}
  \label{fig-trafo}
\end{figure}
Let $\mu = \inf \{ \feps, |\lambda| \}$. Furthermore, for $q
\in \cciL{K \backslash (0,0)}$, let.  $\hat{q}$ be the Laplace transform of $q$
with respect to $u$, where $q(u,v) = q(x,y)$. Suppose that $q$ satisfies
\eqref{def-q} with $\phi = 0$ (the equation for the linearized pressure). In the
transformed variables, the equation for $\hat q$ has the form:
  \begin{align*}
    \lambda^2 \hat q(\lambda, v) + {\textstyle (\frac 1\eps)^2} \hat q_{v
      v}(\lambda,v) \ = \ 0 &&\text{ in } \R \times (0,1)
  \end{align*}
  with boundary conditions $\hat q(\lam,1) = (\lam+1) \hat f(\lam+1)$ and $\hat
  q_{v}(\lam,0) = 0$. This explicit expression of $\hat q$ in Mellin transformed
  variables motivates the definition of our norms. For any multi-index $\alpha =
  (\alp_1, \alp_2) \in \R \times \N_0$, we define $|\alpha| = \alp_1 + \alp_2$
  and $\Lambda^\alp = \lam^{\alp_1} (\frac 1\eps \partial_\nu)^{\alp_2}$.  For
  any $q_0 \in \cciL{K \backslash (0,0)}$ and for $\ell \geq 0$, we set
\begin{align} \label{Y-hom} %
  \begin{aligned}
    \YM{q_0}{\ell} \ %
    &= \ \sum_{|\alp|=3\ell} \nltL{\sup_{v \in (0,1)} \big| e^{\frac 12 \eps
        |\lambda|(1-v)} \Lambda^{\alp}\mu^\ell \widehat{q_0} \big|}{\Re \lambda
      = 3\ell-\frac 32}.
  \end{aligned}
\end{align}
For technical reasons that will be explained later, we will use these
homogeneous norms in particular for $\ell \in [\frac 23,k]$.  Suppose
that $f \in \XS{k}$ and suppose that $q$ is defined as in
\eqref{def-q} with $\eta = f_x$.  Since $q = f_x$ on $\partial_1 K$
and since by the definition of $\XS{k}$ the Taylor polynomial of $f$
of order $n_k$ is well-defined, we expect to have control on the
supremum norm for derivatives of $q$ up to order $n_k - 1$, where we
recall that $n_k = \roundup{3k-\frac 32}$.  Indeed, such an estimate
is given in Lemma \ref{lem-Y}.

\medskip

% Conversely, if all derivatives of order less or equal than $n_k-1$ of $q_0 \in
% \ciiL{K}$ vanish at $(0,0)$, then $q \in \YSoo{k}$.
For $q \in \ciiL{K}$, let $\PP_q$ be the Taylor polynomial of $q$ at $(0,0)$ of
order $n_k-1$ (if $k \in \N$ then $n_k-1 = 3k-2$). Let $\zeta : K \to \R$ be a
cut-off function such that $\zeta = \zeta(r)$ with $\zeta = 1$ in $[0,\frac
18]$, $\zeta = 0$ outside $[0,\frac 14]$ and such that $0 \leq \zeta \leq 1$. We
decompose $q := q_0 + q_1$ with $q_1 = \zeta \PP_q$ and define for $ k,\ell \in
\R$, the norm
\begin{align} \label{Y-norm} %
  \YN{q}{k} \ %
  = \ \NNN{\PP_q}{\PP} + \sup_{\frac 23 \leq \ell \leq k}
  \YM{q_0}{\ell}. %
\end{align}
Here, $\NNN{\cdot}{\PP}$ is any fixed polynomial norm, e.g. the
$\ell^2$-norm of the coefficients. We do not include the homogeneous
norms with $\ell < \frac 23$ since this would lead to 'negative
derivatives' in the definition of the following norm,
cf. \eqref{Z-norm}. The spaces $\YS{k}$ and $\YSoo{k}$ are defined by
completion with respect to functions $\cciL{K}$ respectively $\cciL{K
  \backslash (0,0)}$ as before.  By the above considerations, the
polynomial is uniquely defined and the norm is well--defined. The
space $\YSL{k}{\Omega}$ and its norm are defined by localizing the
above definitions (analogously as for the definition of
$\XSL{k}{[0,1]}$). The space $\PYS{k}$ and its norm are defined
analogously to $\PXS{k}$, i.e.
    \begin{align} \label{def-PYN} %
      \PYNL{p}{k}{Q}^2 %
      = \sum_{1 \leq i+j \leq k} \NNN{\partial_t^i
        p}{L^2(\YSL{j}{Q})}^2.
\end{align}
Let us remark that we believe that all homogeneous norms in \eqref{nochwas} for
all real $\ell \in [\frac 23, k]$ can be bounded by the two extremal homogeneous
norms ($\ell = \frac 23$, $\ell = k$). However, the proof of this interpolation
inequality does not seem to be straightforward, in particular since the
analyticity of the expressions is destroyed by the supremum in $v$ so that the
theory of complex interpolation does not seem to apply directly. This is the
reason, why we include the information about all the intermediate homogeneous
norms into the definition \eqref{Y-norm}. This is not necessary in the
definition of the norm for the space $\XS{k}$ in \eqref{norm} since there we
have an interpolation result at hand (see Lemma \ref{lem-real}).

\medskip

The space $\ZS{k}$ describes the regularity of functions $g = \Delta_\eps q$ for
$q \in \YS{k}$. In view of \eqref{Y-hom}, this suggests to consider for real
$\ell \in [\frac 23, k]$, the homogeneous norms of type
 \begin{align} %
   \ZM{g_0}{\ell} \ %
   &= \ \sum_{|\alp| = 3\ell-2} \nltL{\sup_{v \in (0,1)} e^{\frac 12 \eps
       |\lambda|(1-v)} \Lambda^{\alpha} \mu^\ell |\widehat{g_0}|}{\Re \lambda =
     3\ell-\frac 72}, \label{Z-hom}
 \end{align}
 where the sums are taken over $\alpha = (\alp_1,\alp_2) \in \R_+ \times \N_0$
 using the notation $|\alp| = \alp_1 + \alp_2$.  Note that $\ZMoo{g_0}{\ell} <
 \infty$ implies $\NNN{g_0}{L_y^\infty} = o(x^{(3\ell-7/2)/2})$ for all $g_0 \in
 \ZSoo{\ell}$. Correspondingly, we say $g \in \ZS{k}$ if there is a polynomial
 $\PP_g$ in $x,y$ of order $n_k - 3$ (if $k \in \N$ then $n_k-3 = 3k-4$) such
 that $g_0 := g - \zeta \PP_g \in \ZSoo{k}$ for some radial cut-off $\zeta =
 \zeta(r)$ with $\zeta = 1$ in $[0,1]$ and $\zeta = 0$ in $[2,\infty)$. The
 corresponding norm is given by
 \begin{align} \label{Z-norm} %
   \ZN{g}{k} \ = \ \NNN{\PP_g}{\PP} + \sup_{\frac 23 \leq \ell \leq k}
   \ZM{g_0}{\ell}.
\end{align}
The spaces $\ZSoo{k}$ and $\ZS{k}$ are defined by completion as
before.  The corresponding space $\ZSL{k}{\Omega}$ for the droplet
case and its norm are defined analogously as before by localizing the
above definitions. Note that the minimal value $\ell = \frac 23$ in
\eqref{Z-norm} is chosen such that the exponent $|\alp| = 3\ell-2$ in
definition \eqref{Z-hom} stays non-negative.

\subsection{Compatibility conditions}

Higher regularity for our solution requires compatibility conditions on the
initial data (for both Darcy flow and thin-film equation): Indeed, let $f \in
\PXS{k}$ be a solution of \eqref{darcy-4}, respectively \eqref{tfe-2}. Since $f
= 0$ at $x=0$, it follows that $\partial_t^k f_{|x=0}=0$. This translates to a
compatibility condition for the initial data.  It is obtained by consecutively
replacing the time derivatives in $\partial_t^k f_{|x=0}$ by the spatial
operators $\NN_\eps$ and $A_\eps$ using \eqref{darcy-4} resp.
\eqref{tfe-2}. The corresponding condition needs to be satisfied for the initial
data:
\begin{align} \label{compatibility} 
  f_{\rm in} \text{ satisfies compatibility conditions ensuring $\partial_t^l f_{|x=0} = 0$ at $t=0$}
\end{align}
for all $ 0 \leq l \leq k $.  For example, the condition for $k=1$ corresponds
to $N_\eps(f_{\rm in},f_{\rm in}) = A_\eps f_{\rm in}$. An analogous
compatibility condition needs to be satisfied for the linear evolution $f_t +
A_\eps f = g$.  In this case, we need $f_{\rm in}$ and $g $ to satisfy for all $
0 \leq l \leq k$
\begin{align} \label{linear-compatibility} f_{\rm in} \text{ and } g \text{
    satisfy compatibility conditions  ensuring $\partial_t^l f_{|x=0} = 0$ at
    $t=0$.}
\end{align}

\section{Statement and discussion of  the  results} \label{sec-result}

\subsection{Statement of results}

We have the following main results: We have well-posedness for the Darcy flow
with moving contact line. In the regime of lubrication approximation, we have
convergence of the solutions towards solutions of the thin-film
equation. Furthermore as a consequence, we obtain well-posedness for the
thin-film equation.

\begin{theorem}[Darcy flow] \label{thm-wedge} Let $k \geq 1$ be an  integer, $\eps
  \in (0, \frac {\pi}{3(2k+1)})$. Suppose that $f_{\rm in}^\eps \in \XS{k+1/2}$
  satisfies \eqref{compatibility} and suppose that $\XN{f_{\rm in}^\eps}{k+1/2}
  \leq \alpha_k$ for some (small) universal constant $\alpha_k$. Then there is a
  unique global in time solution $f^\eps \in \PXS{k}$ of \eqref{darcy-4}$_\eps$
  with initial data $f_{\rm in}^\eps$.  Furthermore,
  \begin{align} \label{est-wedge} %
    \PXN{f^\eps}{k+1} + \PYN{\overline p^\eps}{k+1} \ \leq \ C_k \XN{f_{\rm
        in}^\eps}{k+1/2},
  \end{align}
  where $\overline p^\eps = p^\eps \circ \Psi^\eps \in \YS{k}$ and $\Psi^\eps: K
  \to K^{f^{\eps}}$ is the coordinate transform in Lemma \ref{lem-psi}; see also
\eqref{norm}, \eqref{Y-hom} for the definition of the norms. The constant in
\eqref{est-wedge} is universal, in particular it does not depend on $\eps$.
\end{theorem}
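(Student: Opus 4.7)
The plan is to set up \eqref{darcy-4} as a fixed-point equation built from the linear maximal-regularity theory for $\partial_t + A_\eps$ combined with the nonlinear estimates on $N_\eps$ developed in Section~\ref{sec-wedge}. The natural ambient space is $\PXS{k+1}$, whose trace space at $t=0$ is $\XS{k+1/2}$, which matches the regularity assumed on $f_{\rm in}^\eps$.

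First I would establish the linear theory. The elliptic bound \eqref{est-MR-hom} from \cite{KnuepferMasmoudi-2012a} shows that $A_\eps$ is an isomorphism from (the homogeneous part of) $\XS{k+1}$ onto $\XS{k}$, uniformly in $\eps$ in the admissible range. By taking Mellin transform in $x$ and Laplace transform in $t$ (as in Section~\ref{sec-overview}), together with the spectral information on $A_\eps$ established in the companion paper, one obtains that for every $g$ with $\TXNK{g}{k}{Q}<\infty$ and every compatible initial datum $f_{\rm in}\in \XS{k+1/2}$ satisfying \eqref{linear-compatibility}, the linear problem $f_t+A_\eps f=g$, $f|_{x=0}=0$, $f|_{t=0}=f_{\rm in}$ admits a unique solution satisfying
\begin{equation*}
\PXN{f}{k+1} \ \leq \ C_k \bigl( \TXNK{g}{k}{Q} + \XN{f_{\rm in}}{k+1/2} \bigr),
\end{equation*}
with $C_k$ independent of $\eps$. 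This is the linear backbone of the fixed-point argument.

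Second, I would invoke the nonlinear estimates of Section~\ref{sec-wedge} to control $N_\eps$. Because $N_\eps(\phi,f)$ in \eqref{def-N} is, roughly, bilinear in $(\phi,f)$ modulo smooth nonlinear factors, the estimates there give schematically
\begin{equation*}
\TXNK{N_\eps(f_1)-N_\eps(f_2)}{k}{Q} \ \leq \ C_k \bigl(\PXN{f_1}{k+1}+\PXN{f_2}{k+1}\bigr)\PXN{f_1-f_2}{k+1},
\end{equation*}
uniformly in $\eps$. Thus for data of size $\XN{f_{\rm in}^\eps}{k+1/2}\leq \alpha_k$ with $\alpha_k$ small, the solution map $\phi\mapsto f$, where $f_t+A_\eps f=N_\eps(\phi)$ with the prescribed boundary and initial conditions, maps a ball of radius $2C_k \alpha_k$ in $\PXS{k+1}$ into itself and contracts. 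The Banach fixed-point theorem yields a unique $f^\eps\in\PXS{k+1}$ satisfying \eqref{darcy-4}. Since the estimate $\PXN{f^\eps}{k+1}\leq C_k \XN{f_{\rm in}^\eps}{k+1/2}$ holds on every interval $[0,T]$ without growth, the solution never leaves the smallness regime and the standard continuation argument gives global existence. The pressure bound $\PYN{\overline p^\eps}{k+1}\leq C_k \XN{f_{\rm in}^\eps}{k+1/2}$ then follows from the profile estimate by Lemma~\ref{lem-Y}, applied pointwise in $t$, together with the coordinate transform $\Psi^\eps$ from Lemma~\ref{lem-psi}.

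The main obstacle is the nonlinear estimate for $N_\eps$. The operator $N_\eps$ combines three qualitatively different contributions: the boundary term $\phi_x(f_{xx}-3\eps^2 f_x^2)/(1+\eps^2)^{5/2}|_{x=0}$ coming from motion of the triple point, the domain-perturbation term $\partial_x B_\eps^\phi\bigl(f_x/(1+\eps^2(1+f)^2)^{3/2}\bigr)+A_\eps f$ arising from replacing $K^\phi$ by $K$, and the nonlinear curvature factor. Each must be controlled in the Mellin-$L^2$ / radial-$L^\infty$-in-$v$ weighted norms with constants independent of $\eps$, which is delicate because the character of the norms changes from third-order non-degenerate parabolic (fixed $\eps$) to fourth-order degenerate parabolic ($\eps\to 0$). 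The precise estimates handling these three contributions are exactly the content of Section~\ref{sec-wedge}, and once they are in place the fixed-point argument is routine.
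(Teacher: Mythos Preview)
Your overall strategy matches the paper's: maximal regularity for $\partial_t+A_\eps$ (Proposition~\ref{prp-linear}) together with the bilinear-type bound on $N_\eps$ (Proposition~\ref{prp-nonlinear}) feed a Banach fixed-point argument in a small ball of $\PXS{k+1}$. That part is fine.

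There is, however, a real gap in your global-existence step. The linear maximal-regularity estimate actually proved in the paper (and in \cite{KnuepferMasmoudi-2012a}) is not the clean inequality you wrote; it carries an additional term $\NNN{f}{C_t^0 L_x^2(Q_\tau)}$ on the right-hand side (see \eqref{l-2}). For short time this is harmless: one uses $\NNN{f}{C_t^0 L_x^2}\le \nlt{f_{\rm in}}+C\tau\,\PXN{f}{k+1}$ and absorbs the last term for $\tau$ small. But your claim that ``the estimate holds on every interval $[0,T]$ without growth'' is not justified by the fixed-point argument alone; on each iteration of the continuation you would pick up a factor from absorbing this term. The paper closes this by invoking the energy dissipation identity \eqref{E-darcy}, which gives the uniform-in-time bound $\NNN{f}{C_t^0 L_x^2}\le C\nlt{f_{\rm in}}$ independently of $\tau$, and only then does the contraction argument yield constants independent of the time horizon. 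You should make this step explicit.

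A minor correction: the pressure bound $\PYN{\overline p^\eps}{k+1}\le C_k\XN{f_{\rm in}^\eps}{k+1/2}$ does not come from Lemma~\ref{lem-Y} (which is merely a Sobolev embedding for $\YS{k}$). It comes from Proposition~\ref{prp-p-nonlinear}, which gives $\YN{p}{k}\le C\XN{f}{k}$ via the pull-back $\Psi^\eps$ of Lemma~\ref{lem-psi}; integrating this in time against the profile estimate yields the $\PYS{k+1}$ bound.
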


The above well-posedness result can also be stated for the Darcy flow in terms
of the original variables: Suppose that the assumptions of Theorem
\ref{thm-wedge} hold. Then there exists a unique classical solution of
\eqref{darcy}.  In particular, if $f_{\rm in}^\eps$ is sufficiently small then
$h_{\rm in}^\eps$ defined by $\partial_x h_{\rm in}^\eps = 1 + f_{\rm in}^\eps$
satisfies $h_{\rm in}^\eps > 0$ for $x>0$.

\medskip

We also have convergence for solutions of the Darcy flow to solutions of the
thin-film equation. Furthermore, as suggested by the asymptotic expansion, in
the limit $\eps \to 0$, the pressure $p$ is independent of the vertical
direction:

\begin{theorem}[Convergence] \label{thm-convergence} %
  Suppose that the assumptions of Theorem \ref{thm-wedge} are satisfied.  Let
  $f^\eps$ be the solution of \eqref{darcy-4}$_\eps$ with initial data $f_{\rm
    in}^\eps$ and let $p^\eps$ be the corresponding pressure. Suppose that
  $\XN{f_{\rm in}^\eps - f_{\rm in}}{k+1/2} \to 0$ as $\eps \to 0$ for some
  $f_{\rm in} \in \XStfe{k+1/2}$. Then there exist $f$, $p$ and a
  subsequence $\eps_j \to 0$ such that
  \begin{align} \label{eq-convergence} %
    \PXN{f^{\eps_j} -f}{k} \to 0 \quad %
    \text{and} \quad %
    \PYN{\overline p^{\eps_j} - \overline p}{k} \to 0 && %
    \text{as $j \to \infty$,}
  \end{align} 
  where $\overline p^\eps = p^\eps \circ \Psi^\eps \in \YS{k}$, with $\Psi^\eps:
  K \to K^{f^{\eps}}$ defined in \eqref{psi-form}. Furthermore, $f \in
  \PXStfe{k+1}$ solves \eqref{tfe-2} with initial data $f_{\rm in}$. The limit
  pressure $p$ does not depend on the vertical direction, i.e. $p = p(t,x)$.
\end{theorem}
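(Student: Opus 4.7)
The plan is to leverage the uniform estimate \eqref{est-wedge} of Theorem \ref{thm-wedge} \emph{at the higher level} $k+1$, use it to extract a convergent subsequence in the weaker topology at level $k$ via a compactness argument, and then pass to the limit in the evolution \eqref{darcy-4}$_\eps$ to identify $f$ as a solution of \eqref{tfe-2}. Since $\XN{f_{\rm in}^\eps - f_{\rm in}}{k+1/2}\to 0$, the data $f_{\rm in}^\eps$ stay uniformly bounded in $\XS{k+1/2}$, the smallness assumption of Theorem \ref{thm-wedge} persists for all small $\eps$, and \eqref{est-wedge} yields the uniform bound
\[
  \PXN{f^\eps}{k+1} + \PYN{\overline p^\eps}{k+1} \ \leq \ C_k
\]
for $\eps\in(0,\pi/(3(2k+1)))$.

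Next I would extract $\eps_j\to 0$ so that $f^{\eps_j}\to f$ strongly in $\PXS{k}$ and $\overline p^{\eps_j}\to\overline p$ strongly in $\PYS{k}$. For the profile, I would combine an Aubin--Lions-type argument in time (the bound on $\partial_t f^\eps$ in $\PXS{k+1}$ together with the spatial bound at level $k+1$ gives equicontinuity in time at level $k$) with spatial compactness of the embedding $\XS{k+1}\hookrightarrow\XS{k}$. Spatial compactness must be treated carefully because the spaces are defined via the Mellin transform: the polynomial part $\PP_f$ lives in a finite-dimensional space, while the remainder $f_0$ benefits from the weight $|\lambda|^{3\ell+1}\mu^\ell$ in \eqref{norm-lambda}, which supplies decay in $|\Im\lambda|$ enabling a diagonal extraction. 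A parallel compactness argument, performed directly in Mellin variables, gives the strong convergence of $\overline p^{\eps_j}$ in $\PYS{k}$.

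To identify the limit equation, I pass to the limit term by term in \eqref{darcy-4}$_{\eps_j}$. The linear part converges by the uniform operator bound \eqref{est-MR-hom} of \cite{KnuepferMasmoudi-2012a} together with the strong convergence $f^{\eps_j}\to f$, which gives $A_{\eps_j}f^{\eps_j}\to A_0 f$. For the nonlinear part $N_{\eps_j}(f^{\eps_j})$, I would exploit the elliptic problem \eqref{def-bt} defining $B_\eps^\phi$: on the $\eps$-rescaled wedge $\Delta_\eps=\partial_x^2+(1/\eps)^2\partial_y^2$ forces any uniformly bounded pressure sequence to become $y$-independent in the limit, so that the nonlocal operator $\partial_x B_\eps^\phi$ collapses onto the local expression $-(\int_0^x\phi\,dx\cdot(\,\cdot\,)_{xx})_x$ appearing in \eqref{def-A0}. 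The same observation, applied directly to $\overline p^{\eps_j}$ via the uniform control of $(1/\eps)\partial_y\overline p^\eps$ furnished by $\PYN{\overline p^\eps}{k+1}\leq C_k$, yields $\partial_y\overline p = 0$ in the limit, so $\overline p = \overline p(t,x)$. The convergence of initial data is immediate: $f(\,\cdot\,,0) = \lim f^{\eps_j}(\,\cdot\,,0) = f_{\rm in}$ in $\XS{k+1/2}$.

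The main obstacle I expect is the uniform passage to the limit in the nonlocal nonlinearity $\partial_x B_{\eps_j}^{f^{\eps_j}}$, since both the ambient anisotropic operator and the solution-dependent domain $K^{f^\eps}$ vary with $\eps$. Uniform-in-$\eps$ continuity of $\phi\mapsto B_\eps^\phi$, together with the convergence of its harmonic extension on the vanishingly thin wedge to the vertically averaged limit, is precisely the content that the supremum-in-$v$ structure of the norms $\YS{k}$ and $\ZS{k}$ (Section \ref{ss-Y}) was designed to capture; I would prove the required convergence by inserting the splitting $B_{\eps_j}^{f^{\eps_j}} = B_{\eps_j}^{0} + (B_{\eps_j}^{f^{\eps_j}}-B_{\eps_j}^0)$ and controlling the perturbation uniformly in $\eps_j$ using the $\YS{k}$ estimates. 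A secondary difficulty is establishing the compactness of the pressure spaces, which are defined purely in Mellin coordinates and lack a direct real-variable counterpart; this I would circumvent by using the explicit algebraic relation between $\overline p$ and $f_x$ supplied by \eqref{def-q}, transferring the (already obtained) strong convergence of the profile into strong convergence of the pressure.
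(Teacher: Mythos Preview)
Your overall strategy is sound, but the paper takes a cleaner route at two key points.

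\textbf{Compactness for the profile.} You propose compactness of the embedding $\XS{k+1}\hookrightarrow\XS{k}$ via an Aubin--Lions argument plus Mellin-variable decay. The subtlety you do not address is that these spaces are $\eps$-dependent through $\mu=\min\{|\lambda|,\tfrac1\eps\}$, so one needs a compactness statement that is uniform in $\eps$. The paper bypasses this by invoking the explicit frequency splitting $f^\eps=f^\eps_+ + f^\eps_-$ of Lemma~\ref{lem-X-laplace} (see \eqref{def-fpm}), which converts the uniform $\PXN{f^\eps}{k+1}$ bound into uniform bounds on the \emph{$\eps$-independent} real-variable norms $\nlt{x^{j+1}\partial_t^i\partial_x^{4j+1}f^\eps_+}$ and $\nlt{\tfrac1{\eps^j}x\,\partial_t^i\partial_x^{3j+1}f^\eps_-}$ for $i+j\le k+1$. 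Standard weighted-Sobolev compactness then applies to each piece separately. This is both simpler and more transparent than a Mellin-space extraction.

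\textbf{Identification of the limit equation.} You aim to show $A_{\eps_j}f^{\eps_j}\to A_0 f$ and $N_{\eps_j}(f^{\eps_j})\to N_0(f)$ directly, and you correctly flag the nonlocal operator $\partial_x B_{\eps_j}^{f^{\eps_j}}$ as the main obstacle. The paper avoids this entirely: instead of passing to the limit in \eqref{darcy-4}, it uses the conservation-of-mass formulation $h_t^\eps + (\int_0^{h^\eps} v^\eps\,dy)_x=0$, cf.\ \eqref{mass-eps}. The embeddings \eqref{X-sob}, \eqref{Y-sob} give $f^\eps\to f$ in $L^2(H^4)$, $p^\eps\to p$ in $L^2(H^1)$ with $\|p_y^\eps\|_{L^2(L^2)}\le C\eps\to 0$, so $v^\eps=p_x^\eps\to p_x=h_{xx x}$ and the continuity equation becomes $h_t+(hh_{xxx})_x=0$ in the limit \eqref{mass-tfe}. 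No operator convergence for the Dirichlet--Neumann map is needed.

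\textbf{Pressure convergence.} Your final remark (transfer strong convergence of $f$ to strong convergence of $\overline p$ via the elliptic relation) is exactly what the paper does: it shows $(\overline p^{\eps_j})$ is Cauchy in $\PYS{k}$ using the stability estimate \eqref{ppf-2} of Proposition~\ref{prp-p-nonlinear}, which bounds $\YN{p-\tilde p}{k}$ by the difference of the profiles. A direct compactness argument in $\YS{k}$ is not needed.
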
 
As a consequence of Theorem \ref{thm-convergence}, the velocity field $U =
(V,W)$ in the limit $\eps = 0$ is horizontal and does not depend on $y$, i.e. $U
= (V(t,x),0)$. By Theorem \ref{thm-convergence} and by Proposition
\ref{prp-X}(2), the solutions converge also in terms of Sobolev norms. We
e.g. have
\begin{align*}
  \NNN{f^\eps - f}{L_t^2 C_x^{3k-1}(\R_+^2)} \to 0, %
  \quad \NNN{\nabla \overline p^\eps - \nabla \overline p}{L_t^\infty
    C_x^{3k-2}(\R_+ \times K)} \to 0 && %
  \text{ as $\eps \to 0$.}
\end{align*} 
For the case of a droplet as initial data, we have short-time
existence:
\begin{theorem}[Droplet] \label{thm-local} %
  Let $k \geq 1$ be an integer, $\EEEE = (0,1)$. Suppose that $h_{\rm
    in}^\eps \in H^1(\EEEE)$ with $h_{\rm in}^\eps > 0$ in $(-1,1)$
  and with $h_{\rm in}^\eps = 0$, $|h_{\rm in,x}^\eps| = 1$ on
  $\partial \EEEE$.  Suppose that $f_{\rm in}^\eps := [h_{\rm in}^\eps
  - \frac 12 (1-x^2)]_x \in \XSL{k+1/2}{\EEEE}$ satisfies the
  compatibility condition \eqref{compatibility}. Then there is a time
  $\tau > 0$ such that for every $\eps \in (0, \frac {\pi}{3(2k+1)})$,
  there is a unique short--time solution $h^{\eps}$ of \eqref{darcy}
  with initial data $h_{\rm in}^\eps$ (where $h^\eps$ describes the
  profile of the propagating liquid). Furthermore, $f^{\eps} :=
  [h^{\eps} - \frac 12 (1-x^2)]_x \in \PXSL{k+1}{(0,\tau) \times
    \EEEE}$ satisfies
  \begin{align} \label{est-local} %
    \PXNL{f^\eps}{k+1}{(0,\tau) \times \EEEE} \ \leq \ C_k \XNL{f^\eps_{\rm
        in}}{k+1/2}{\EEEE}.
  \end{align}
  The solution depends continuously on the initial data.  Furthermore
  $\PXN{f^{\eps_j} - f}{k} \to 0$ as $j \to \infty$ for a subsequence $\eps_j
  \to 0$ and $f$ solves \eqref{tfe-2}.
\end{theorem}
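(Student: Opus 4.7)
The plan is to reduce the droplet problem to two nearly decoupled wedge problems near the contact points $x=0$ and $x=1$ and then to mirror the contraction-map argument behind Theorem \ref{thm-wedge}. The droplet norm \eqref{norm-E} is by construction the sum of two wedge norms applied to $f^L = f\zeta$ and $f^R = f(1-\cdot)\zeta$, so there is a natural splitting $f = f^L + f^R$ compatible with the infinite-wedge analysis already carried out.

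\textbf{Linear theory and contraction.}
First I would rewrite the transformed evolution \eqref{def-LL1}--\eqref{speed-local} in the schematic form
\begin{align*}
\partial_t f + A_\eps^{\rm loc} f \;=\; \NN_\eps(f, s_-, s_+, D),
\end{align*}
where $A_\eps^{\rm loc}$ is the localized linear operator built from the two wedge operators $A_\eps$ of \eqref{def-A}, and $\NN_\eps$ collects everything else: the true nonlinearities, the corrections from replacing the droplet domains $K^f$ by the reference wedges $K$, the factor $D(t)$, and the advection term $\bigl((1-x)\dot s_- + x \dot s_+\bigr)(f+f^*)$. The quantities $s_\pm(t), D(t)$ are read off from the ODE system \eqref{speed-local}--\eqref{def-MD}, which I view as coupled to $f$ through the boundary traces of $f_{xx}$ (controlled by $\XSL{k+1/2}{E}$ for $k\geq 1$). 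Promoting the homogeneous wedge bound \eqref{est-MR-hom} to a parabolic maximal regularity statement on each localized piece gives
\begin{align*}
\PXNL{f}{k+1}{(0,\tau) \times E} \;\lesssim\; \XNL{f_{\rm in}}{k+1/2}{E} + \PXNL{\NN_\eps(f)}{k}{(0,\tau) \times E},
\end{align*}
and the bilinear structure of $N_\eps(\phi,f)$ in \eqref{def-N} together with the smallness of $\tau$ provides an $o(1)$ contraction factor. This yields existence, uniqueness, and the bound \eqref{est-local}; continuous dependence follows by running the same linear estimate for the difference of two solutions, while the compatibility condition \eqref{compatibility} ensures that the iterates stay inside $\PXSL{k+1}{(0,\tau) \times E}$.

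\textbf{Convergence $\eps \to 0$.}
Because \eqref{est-local} is uniform in $\eps$, weak-$*$ compactness in $\PXSL{k+1}{(0,\tau) \times E}$ extracts a subsequence $\eps_j \to 0$ with $f^{\eps_j} \rightharpoonup f$ in that space and, via an Aubin--Lions argument that uses Hardy's inequality \eqref{hardy} to compensate for the weights, strong convergence in $\PXSL{k}{(0,\tau) \times E}$. As in the proof of Theorem \ref{thm-convergence}, the Dirichlet--Neumann operator $B_\eps^f$ applied to $f_x$ converges in $\YS{k}$ to the local thin-film expression $-\partial_x\bigl(\int_0^x f\,d\tilde x\bigr)(\cdot)_{xx}$, which is enough to pass to the limit in every term of \eqref{def-LL1} and identify $f$ as a solution of \eqref{tfe-fixed} with initial data $f_{\rm in}$.

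\textbf{Main obstacle.}
The hard part is the interplay between the nonlocal operator $B_\eps^f$ and the cutoff decomposition: $B_\eps^f$ does not commute with multiplication by localizers, and on the bounded interval one cannot exploit the scaling invariance $(x,t,f)\mapsto(\lambda x,\lambda^3 t, f)$ available on the infinite wedge. Proving that the commutators and the cross terms between the left and right wedges are genuinely lower order in $\XSL{k}{E}$, uniformly in $\eps$, is precisely what the localized estimates of Section \ref{sec-local} must accomplish, and this is where the bulk of the technical effort is concentrated.
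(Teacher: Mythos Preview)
Your proposal has a genuine gap at the contraction step. The wedge argument of Theorem~\ref{thm-wedge} relies essentially on the smallness hypothesis $\XN{f_{\rm in}}{k+1/2} \leq \alpha_k$: the bilinear estimate \eqref{nl-1} gives the solution map a Lipschitz constant of order $\PXN{f_1}{k+1}+\PXN{f_2}{k+1}$, which is made $<1$ by working in a small ball. In the droplet theorem there is no smallness assumption on $f_{\rm in}$, so the ball you iterate in has radius of order one and the bilinear structure alone does not contract. Your claim that ``smallness of $\tau$ provides an $o(1)$ contraction factor'' is not substantiated: the parabolic norm $\PXN{\cdot}{k+1}$ contains the sup-in-time piece $\CXN{\cdot}{k+1/2}$, which is bounded below by $\XN{f_{\rm in}}{k+1/2}$ and cannot be made small by shrinking $\tau$. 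The highest-order nonlinear term $\partial_x B_\eps^f \partial_x f$ therefore does not pick up any factor of $\tau^\alpha$.

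The paper handles this by a different mechanism: it applies the Inverse Function Theorem to the map $\MM_\eps(f) = (f_{|t=0},\LL_\eps f)$, linearized not at $0$ but at an approximate solution $w$ built via an extension lemma so that $w_{|t=0}=f_{\rm in}$, $w_{t|t=0}=-\AA_\eps f_{\rm in}$, and hence $\LL_\eps w_{|t=0}=0$. One then proves (i) maximal regularity for the full linearization $\delta\LL_\eps^w$ on the droplet domain (this is where a three-piece partition of unity---left boundary, interior, right boundary---and the commutator/difference estimates you mention enter), and (ii) continuous differentiability of $\LL_\eps$ near $w$. Because $\LL_\eps w_{|t=0}=0$, one has $\PXNL{\LL_\eps w}{k}{Q_\tau}\to 0$ as $\tau\to 0$, so for $\tau$ small the target $(f_{\rm in},0)$ lies in the image of the local diffeomorphism around $w$. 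Smallness of $\tau$ is used here, but to place the target in range, not to manufacture a contraction factor for the nonlinearity. Your identification of the commutator and cross-term estimates for the nonlocal operator as the main technical burden is correct; what is missing is the construction of the approximate solution $w$ and the switch from a contraction argument to the inverse-function framework.
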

Theorem \ref{thm-local} also shows that any solution immediately assumes a
  regularity of order $\frac 1\eps$ where we recall that $\eps$ is related to
  the opening angle. This is the maximal regularity which can be expected in a
  non-smooth domain with opening angle of order $\eps$, see
  \cite{KozlovMazyaRossmann-Book}.
  \begin{corollary} \label{cor-local} %
    Any solution of $f$ as in Theorem \ref{thm-local} satisfies $f \in
    \XSL{K+\frac 12}{\EEEE}$ for any fixed positive time, where $K$ is the
    largest integer such that $K < \frac 12(\frac{\pi}{3\eps} - 1)$.
  \end{corollary}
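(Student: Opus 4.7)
The strategy is a parabolic bootstrap: starting from the solution $f\in \PXSL{k+1}{(0,\tau)\times \EEEE}$ produced by Theorem \ref{thm-local}, one upgrades the spatial regularity of $f(t_0,\cdot)$, for any fixed $t_0>0$, by one level of the $\XS{\bullet}$-scale at a time. The ceiling for this procedure is set precisely by the maximal regularity equivalence \eqref{est-MR-hom},
\begin{equation*}
c\,\XM{A_\eps g}{\ell}\ \le\ \XM{g}{\ell+1}\ \le\ C\,\XM{A_\eps g}{\ell},\qquad\text{valid for}\ \eps<\tfrac{\pi}{3(2\ell+1)}.
\end{equation*}
Hence the iteration can run precisely up to the integer level $K$ for which $\eps<\frac{\pi}{3(2K+1)}$ still holds, i.e.\ $K<\tfrac12(\tfrac{\pi}{3\eps}-1)$, which is exactly the bound in the statement.

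\textbf{Inductive step.} Suppose inductively that for some integer $\ell$ with $k\le \ell<K$ one already has $f\in \PXSL{\ell+1}{(t_*,\tau)\times \EEEE}$ for some $0<t_*<t_0$. The plan is to differentiate \eqref{darcy-4} (or its droplet analogue \eqref{def-LL1}) once in time, so that $w:=\partial_t f$ satisfies a linear parabolic problem $w_t+A_\eps w=\partial_t N_\eps(f)$ whose right hand side is controlled in $L^2_t\XS{\ell}$ by the already-available $\PXSL{\ell+1}{}$-norm of $f$ together with the multiplicative and trace estimates of Section \ref{sec-local}. Multiplying $w$ by a smooth time cut-off $\chi$ vanishing on $(0,t_*+\delta/2)$ and equal to $1$ on $(t_*+\delta,\tau)$ converts this into a problem for $\chi w$ with zero (hence automatically compatible) initial data. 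Maximal regularity for $\partial_t+A_\eps$ at the next level — permissible because $\ell+1\le K$ makes \eqref{est-MR-hom} available — then yields $\chi w\in \PXSL{\ell+1}{(t_*,\tau)\times \EEEE}$. Reading back the identity $A_\eps f=-f_t+N_\eps(f)$ pointwise in $t\in (t_*+\delta,\tau)$ and applying \eqref{est-MR-hom} at level $\ell+1/2$ finally yields $f\in \PXSL{\ell+2}{(t_*+\delta,\tau)\times \EEEE}$, so in particular $f(t_0,\cdot)\in \XSL{\ell+3/2}{\EEEE}$ once $\delta$ is taken with $t_*+\delta<t_0$.

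\textbf{Iteration and conclusion.} Repeating the bootstrap step $K-k$ times, each time enlarging the left endpoint of the time interval by a fraction of the remaining gap to $t_0$, one reaches $f(t_0,\cdot)\in \XSL{K+1/2}{\EEEE}$, which is the claim. The iteration cannot be continued past $\ell=K$ because the equivalence \eqref{est-MR-hom} — the only tool used to convert $\XM{A_\eps f}{\cdot}$-control into $\XM{f}{\cdot}$-control — fails precisely for $\eps\ge \frac{\pi}{3(2(K+1)+1)}$.

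\textbf{Main obstacle.} The technical heart of the argument is the estimate for the nonlinear commutator $\partial_t N_\eps(f)$ in the $\XS{\ell}$-scale \emph{uniformly in $\eps$}. The norm \eqref{norm-lambda} changes character between the non-degenerate third-order regime ($\eps>0$ fixed) and the degenerate fourth-order regime ($\eps=0$), cf.\ the discussion after \eqref{norm-intro}; the multiplicative and trace bounds for $N_\eps$ from Section \ref{sec-local} must therefore be invoked in a way that preserves this transition and does not impose on $\eps$ anything stronger than the linear constraint $\eps<\tfrac{\pi}{3(2K+1)}$ inherited from \eqref{est-MR-hom}. Once these nonlinear estimates are in place, the bootstrap closes and the corollary follows.
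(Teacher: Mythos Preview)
Your bootstrap via time–differentiation has a genuine gap: the nonlinear operator $N_\eps$ is of the \emph{same order} as $A_\eps$ (both are built from the Dirichlet--Neumann operator $B_\eps$), so the loop does not close. Concretely, to run maximal regularity for $\chi w=\chi f_t$ at level $\ell+1$ you need $\partial_t N_\eps(f)\in L^2_t\XS{\ell}$, but from $f\in\PXS{\ell+1}$ Proposition~\ref{prp-nonlinear} only gives $N_\eps(f)\in\PXS{\ell}$, hence $\partial_t N_\eps(f)\in L^2_t\XS{\ell-1}$ --- one level short. Even if one grants that step, the second half of your argument is circular: reading $A_\eps f=-f_t+N_\eps(f)$ and invoking \eqref{est-MR-hom} at level $\ell+1/2$ requires $N_\eps(f)\in\XS{\ell+1/2}$, which by \eqref{no-time-proof} already demands $\XN{f}{\ell+3/2}$. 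There is no smallness at that higher level available for absorption, since Theorem~\ref{thm-local} imposes no smallness on $f_{\rm in}$.

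The paper's argument is much simpler and sidesteps this entirely: it re-applies Theorem~\ref{thm-local} as a black box. From the estimate \eqref{est-local} one has $f\in L^2\big((0,\tau);\XSL{k+1}{\EEEE}\big)$, hence $f(t_0,\cdot)\in\XSL{k+1}{\EEEE}$ for almost every $t_0>0$; restarting Theorem~\ref{thm-local} from such a $t_0$ gives $f\in L^2\big((t_0,\tau);\XSL{k+3/2}{\EEEE}\big)$, and one iterates. The nonlinear term is handled anew inside the inverse--function--theorem machinery at each restart, so nothing circular arises. The ceiling $K<\tfrac12(\tfrac{\pi}{3\eps}-1)$ is exactly the range of $k$ for which Theorem~\ref{thm-local} applies, so the iteration halts there.
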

  Indeed, this follows by a bootstrap argument using
  \eqref{est-local}: If $f_{\rm in } \in \XSL{k+1/2}{\EEEE}$, then we
  also have $\TXSL{k+1}{(0,\tau) \times \EEEE}$ which yields $f \in
  \XSL{k+1}{\EEEE}$ for almost every fixed positive time $t_0 >
  0$. Application of Theorem \ref{thm-local} then implies $f \in
  \TXSL{k+3/2}{(t_0,\tau) \times \EEEE}$ for all $t > t_0$. Now, for
  any $\delta$, we may repeatedly apply this argument for time steps
  of size $\frac \delta K$ which yields the assertion of Corollary
  \ref{cor-local}.

\medskip

Our analysis also yields the following new existence, uniqueness and regularity
result for classical solutions of the thin-film equation:
\begin{theorem}[Thin-film equation] \label{thm-tfe} %
  Let $k \geq 1$ be integer.
 \begin{enumerate}
 \item There is $\alpha_k > 0$ such that for any $f_{\rm in} \in \XStfe{k+1/2}$
   with $\XNtfe{f_{\rm in}}{k+1/2} \leq \alpha_k$ and such that
   \eqref{compatibility} is satisfied, there is a unique global in time solution
   $f \in \PXStfe{k}$ of \eqref{tfe-2} with initial data $f_{\rm
     in}$. Furthermore, $\PXNtfe{f}{k} \leq C_k \XNtfe{f_{\rm in}}{k+1/2}$.
 \item Let $f_{\rm in} \in \XStfe{k+1/2}$ and suppose that the analogous
   assumptions as in Theorem \ref{thm-local} holds.  Then there is a short time
   solution $f \in \PXStfe{k+1}$ of \eqref{tfe-2}. Furthermore,
   $\PXNLtfe{f_\eps}{k+1}{(0,\tau) \times \EEEE} \leq C_k \XNLtfe{f_{\rm
       in}}{k+1/2}{\EEEE}$.
  \end{enumerate}
\end{theorem}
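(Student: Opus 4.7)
The plan is to derive Theorem \ref{thm-tfe} by passing to the limit $\eps \to 0$ in the Darcy-flow results, using Theorem \ref{thm-wedge} (resp.\ Theorem \ref{thm-local}) together with the convergence statement of Theorem \ref{thm-convergence}, and to supply uniqueness through a direct energy argument for the $\eps = 0$ equation.

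For part (1), I start from $f_{\rm in} \in \XStfe{k+1/2}$ satisfying the thin-film compatibility and $\XNtfe{f_{\rm in}}{k+1/2} \leq \alpha_k$. The first step is to produce, for each small $\eps > 0$, approximations $f_{\rm in}^\eps \in \XS{k+1/2}$ which satisfy the $\eps$-dependent compatibility \eqref{compatibility} and obey $\XN{f_{\rm in}^\eps - f_{\rm in}}{k+1/2} \to 0$ as $\eps \to 0$. Since \eqref{compatibility} prescribes only finitely many Taylor coefficients at $x = 0$ to vanish, $f_{\rm in}^\eps$ is obtained by adding to $f_{\rm in}$ a smooth cut-off correction supported near the origin that adjusts these coefficients; continuity in $\eps$ of $A_\eps$ and $N_\eps$ at the boundary ensures the correction is $o(1)$ in $\XS{k+1/2}$ as $\eps \to 0$. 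For $\eps$ sufficiently small $\XN{f_{\rm in}^\eps}{k+1/2} \leq \alpha_k$, so Theorem \ref{thm-wedge} yields global solutions $f^\eps$ with $\PXN{f^\eps}{k+1} \leq C_k \alpha_k$. Theorem \ref{thm-convergence} then furnishes a subsequential limit $f \in \PXStfe{k+1}$ solving \eqref{tfe-2} with initial data $f_{\rm in}$, and weak lower semicontinuity of the norm gives $\PXNtfe{f}{k} \leq C_k \XNtfe{f_{\rm in}}{k+1/2}$.

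Uniqueness is not supplied by the limit procedure and requires a separate argument at $\eps = 0$. If $f^{(1)}, f^{(2)}$ are two solutions with the same initial data, the difference $w = f^{(1)} - f^{(2)}$ satisfies the linear equation
\begin{equation*}
  w_t + A_0 w \ = \ N_0(f^{(1)}, w) + N_0(w, f^{(2)}), \qquad w|_{t=0} = 0,
\end{equation*}
to which I would apply the maximal regularity estimate \eqref{est-MR-hom} at $\eps = 0$ together with a bilinear bound on $N_0$ (which is genuinely bilinear by \eqref{def-A0}). Using $\PXNtfe{f^{(i)}}{k+1} \leq C_k \alpha_k$, this produces an inequality of the form $\PXNtfe{w}{k+1} \leq C \alpha_k \, \PXNtfe{w}{k+1}$ on a short initial interval, which for $\alpha_k$ small forces $w \equiv 0$; iteration in time extends uniqueness globally. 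Part (2) is obtained by the same scheme, with Theorem \ref{thm-local} in place of Theorem \ref{thm-wedge} and with all arguments carried out on the bounded interval $\EEEE = (0,1)$.

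The principal obstacles are twofold. First, constructing the $\eps$-compatible approximations $f_{\rm in}^\eps$ requires an explicit understanding of how the compatibility conditions depend on $\eps$ near the boundary, and verifying that the corrections converge to zero in the same norm $\XStfe{k+1/2}$ in which we wish to approximate the data. Second, the uniqueness step is delicate because the fourth-order operator $A_0$ degenerates at $x = 0$ and the bilinear estimates for $N_0$ must be established in the strong norm $\PXStfe{k+1}$ at $\eps = 0$; these estimates are limits of those appearing in the proof of Theorem \ref{thm-wedge} but must be verified directly at the degenerate endpoint rather than inherited through the limit.
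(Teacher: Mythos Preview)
Your proposal is correct and follows essentially the same route as the paper: existence and the a~priori bound are obtained from the $\eps>0$ theory via Theorem~\ref{thm-convergence} (resp.\ Theorem~\ref{thm-local} for part~(2)), and uniqueness is supplied by establishing the $\eps=0$ versions of the linear maximal regularity (Proposition~\ref{prp-linear}) and the bilinear estimate on $N_0$ (Proposition~\ref{prp-nonlinear}), after which the contraction argument of Theorem~\ref{thm-wedge} runs verbatim at $\eps=0$. The paper handles these two $\eps=0$ ingredients by reference---the linear estimate is taken from \cite{KnuepferMasmoudi-2012a}, and the bilinear bound on $N_0$ from the techniques of \cite{GiacomelliKnuepferOtto-2008}---while you correctly flag them as the substantive work; your explicit construction of the $\eps$-compatible data $f_{\rm in}^\eps$ is a point the paper leaves implicit in its invocation of Theorem~\ref{thm-convergence}.
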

Note that the 
 existence for weak solutions of the thin-film equation \eqref{tfe} in
the complete wetting regime (zero contact angle) is well understood, see e.g.
\cite{BerettaBertschDalPasso-1995,BertozziPugh-1996}; uniqueness of weak
solutions is still an open problem. Existence and uniqueness of classical
solutions has been shown in
\cite{GiacomelliKnuepferOtto-2008,GiacomelliKnuepfer-2010}. All the above
results address the case of complete wetting where the liquid attains a zero
contact angle at the triple point.  There are only few results for the partial
wetting regime where existence (but not uniqueness) of weak solutions is proved
\cite{Otto-1998,BertschGiacomelliKarali-2005}. Well-posedness for classical
solutions with non-zero contact angle for a related model has been shown in
\cite{Knuepfer-2011}.  In this paper, we give the first existence and uniqueness
result for \eqref{tfe} in the partial wetting regime. We hope that the
techniques developed in this paper can also be applied to more complicated
systems such as the Stokes flow with various boundary conditions at the
liquid-solid interface or for fluid models where the contact angle condition at
the triple point is different.

\medskip

In the following, we do not explicitly write $k$-dependence of
constants, i.e. we write $C = C_k$.

\subsection{Formal lubrication approximation} \label{ss-formal}

We formally show how the Darcy flow \eqref{darcy-4} converges to the thin-film
equation \eqref{tfe-2}. For this, we show convergence of both linear and
nonlinear operator in \eqref{darcy-4}, i.e. $A_\eps \to A_0$ and $N_\eps \to
N_0$ as $\eps \to 0$. The argument is based on an asymptotic expansion of the
($\eps$-dependent) pressure $p^\eps$ in $\eps y$ (cf. \eqref{def-bt}):
\begin{align*}
  p^{\eps}(t,x,y) = p_0(t,x) + \eps^2 p_2(t,x,{\textstyle \frac y{\eps}}) + \OO(\eps^4).
\end{align*}
Our aim is to solve \eqref{def-bt} up to first order in $\eps$, i.e.
\begin{align} \label{p-formal} 
  \left \{
    \begin{array}{ll}
      \partial_x^2 p^{\eps} + (\feps \partial_y)^2 p^{\eps} \ = \ \OO(\eps^2) \qquad &\text{ in } \Omega, \\
      p^{\eps}_{|\Gamma} \ = \ f_x, \quad p^{\eps}_{y|\Gamma_0} \ = \ 0.
    \end{array} 
  \right.
\end{align}
Indeed, the solution of \eqref{p-formal} has the asymptotic expansion
\begin{align} \notag % \label{p-expansion} 
  p^{\eps}(t,x,y) \ = \ f_x - {\textstyle \frac{\eps^2}2} \left (y^2 - h^2 \right) f_{xxx} +
  \OO(\eps^4).
\end{align}
Inserting this asymptotic expansion into \eqref{def-bt}, we obtain
\begin{align} \label{formal-DN} 
  B_\eps^\phi f_x(t,x) \ = \ \BB_\eps^h f_x(t,x) \ \lupref{def-bt}= \ - h_x f_{xx} - h f_{xxx} + \OO(\eps^2) \ = \ -
  (h f_{xx})_x + \OO(\eps^2).
\end{align}
where $h = x + \int_0^x \phi$.  The asymptotic expression of the linear operator
$A_\eps$ follows as a special case of \eqref{formal-DN} by setting $\phi = 0$ or
equivalently $h = x$:
\begin{align*}
  A_\eps f \ \lupref{def-A}= \ - (B_\eps f_x)_x \ \lupref{formal-DN}= \ (x
  f_{xx})_{xx} + \OO(\eps^2) \ \lupref{def-A0}= \ A_0 f + \OO(\eps^2),
\end{align*}
which implies $A_\eps \to A_0$.  The convergence $N_\eps \to N_0$ can be seen
similarly: With the notation $\Phi = \int_0^x \phi \ dx = h-x$, we have
\begin{align*}
  N_\eps(\phi, f) \ \ \ &\upref{def-N}= \ \phi_x f_{xx|x=0}
  + (B_\eps^\phi f_x)_x - (B_\eps f_x)_x + \OO(\eps^2) \\
  &\upref{formal-DN}= \ \Phi_{xx} f_{xx|x=0} - (h f_{xx})_{xx}
  + (x f_{xx})_{xx} + \OO(\eps^2) \\
  &= \   (\Phi f_{xx|x=0})_{xx} - (\Phi f_{xx})_{xx} + \OO(\eps^2) \ \ \ %
  \upref{def-A0}= \ N_0(\phi, f) + \OO(\eps^2)
\end{align*}
which formally proves the convergence $N_\eps \to N_0$.

\section{Proof of Theorems \ref{thm-wedge}--\ref{thm-tfe}} \label{sec-overview} %

We give the proof of the Theorems \ref{thm-wedge}-\ref{thm-tfe} already in this
section so that the reader may get an overview of the structure of the
proof. Some parts of the proof are based on results and estimates which are
given in detail in the later part of this work.

\subsection{Proof of Theorem \ref{thm-wedge}}

The proof of Theorem \ref{thm-wedge} proceeds by an application of a contraction
principle. It is based on maximal regularity for the linear operator $A_\eps$
and corresponding bounds for the operator $N_\eps$. In the following we drop the
superscript $\eps$ in the notation if the meaning is clear from the context,
e.g. $f = f^\eps$ and correspondingly for the other functions used. The maximal
regularity estimate is the following:
\begin{proposition} \label{prp-linear} %
  Let $k \geq 1$ be an integer, $\eps \in (0, \frac
  {\pi}{3(2k+1)})$. Suppose that $f_{\rm in}^\eps \in \XS{k+1/2}$ and
  $g \in \PXS{k-1}$ satisfy \eqref{linear-compatibility}. Then there
  is a unique global in time solution $f^\eps \in \PXS{k}$ of
    \begin{align} \label{darcy-4-lin} 
      \left \{
        \begin{aligned}
          &f_t + \ A_\eps f = g && \text{for } x \in (0, \infty), \\
          &f = 0 && \text{for } x = 0.
        \end{aligned}
      \right.
    \end{align}
  with initial data $f_{\rm in}$.
  Furthermore, for some uniform constant $C > 0$ and all $\tau > 0$
  and with $Q_\tau = (0,\tau) \times (0,\infty)$, we have
  \begin{align} \label{l-2} %
    \PXNL{f}{k+1}{Q_\tau} \ \leq \ C \ \big(\PXNL{g}{k}{Q_\tau} +
    \TXNL{g}{0}{Q_\tau} + \XNL{f_{\rm in}}{k+1/2}{(0,\infty)} +
    \NNN{f}{C_t^0 L_x^2(Q_\tau))}\big)
    \end{align}
\end{proposition}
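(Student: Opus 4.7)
\emph{Plan.} My plan is to establish Proposition~\ref{prp-linear} by a parabolic maximal-regularity argument built on top of the elliptic estimate \eqref{est-MR-hom} from \cite{KnuepferMasmoudi-2012a}. The decisive structural observation that I will exploit is that $A_\eps$ does not depend on $t$, so that time differentiation commutes with the spatial operator and the elliptic theory can be invoked at every temporal derivative level.

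\emph{Step 1: Existence and basic energy estimate.} I would first record that $A_\eps$ is nonnegative on the Dirichlet subspace $\{f|_{x=0}=0\}$: integrating by parts and using the definition \eqref{def-q} of $B_\eps$ as a Dirichlet-to-Neumann-type operator for the anisotropic Laplacian $\Delta_\eps$ on $K$, one obtains an identity of the form $\langle A_\eps f, f \rangle \gtrsim \iint_K |\nabla q|^2$, which is the linearized counterpart of the dissipation \eqref{E-darcy}. A Faedo--Galerkin scheme (or the Lumer--Phillips theorem applied to the Friedrichs realization of $A_\eps$) then produces a unique weak solution satisfying the basic energy identity
\begin{equation*}
  \tfrac12 \tfrac{d}{dt}\|f\|_{L^2}^2 + \langle A_\eps f, f \rangle = \langle g, f \rangle,
\end{equation*}
which provides the $\NNN{f}{C_t^0 L_x^2}$ and $\TXNL{g}{0}{Q_\tau}$ contributions on the right-hand side of \eqref{l-2}.

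\emph{Step 2: Time differentiation and elliptic bootstrap.} Because $A_\eps$ is time-independent, each $f^{(i)} := \partial_t^i f$ satisfies the same linear equation with forcing $\partial_t^i g$ and with initial datum computed inductively by $f^{(i)}|_{t=0} = \partial_t^{i-1} g|_{t=0} - A_\eps f^{(i-1)}|_{t=0}$. The compatibility hypothesis \eqref{linear-compatibility} is precisely what is needed to guarantee that $f^{(i)}|_{t=0,\, x=0} = 0$ for all $i \leq k$, so that every $f^{(i)}$ belongs to the Dirichlet space where \eqref{est-MR-hom} applies. Rewriting
\begin{equation*}
  A_\eps f^{(i)} = \partial_t^i g - f^{(i+1)}
\end{equation*}
and applying \eqref{est-MR-hom} pointwise in $t$ trades one lost time derivative for one gained spatial derivative in the $\XS{\cdot}$-scale. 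I would then organize these estimates into a finite downward induction on $i = k, k-1, \ldots, 0$, which together with Step~1 applied to $f^{(k)}$ reconstructs every term $\TXNL{\partial_t^i f}{j}{Q_\tau}$ with $i+j \leq k+1$ appearing in \eqref{def-PXN}. The intermediate norms $\CXNL{\partial_t^i f}{j+1/2}{Q_\tau}$ then follow from the real-interpolation trace embedding $L_t^2(\XS{j+1}) \cap H_t^1(\XS{j}) \hookrightarrow C_t^0(\XS{j+1/2})$ applied at each level, which simultaneously identifies $\XN{f_{\rm in}}{k+1/2}$ as the sharp trace space matching the target parabolic regularity. Uniqueness is immediate from the Step~1 identity applied to the difference of two solutions.

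\emph{Main obstacle.} The most delicate part I foresee is the bookkeeping inside the inhomogeneous norm \eqref{norm}: the elliptic gain \eqref{est-MR-hom} controls only the homogeneous seminorm $\XM{\cdot}{k}$, whereas $\XN{\cdot}{k}$ additionally carries the Taylor polynomial $\PP_f$ and a low-frequency $L^2$ piece. I would split each $f^{(i)}$ into its Taylor polynomial at $x=0$ (determined by the compatibility relations and by trace data read off from the equation) and its remainder, track which part falls into which summand of \eqref{norm}, and verify that the constants in \eqref{est-MR-hom} remain uniform in $\eps$ up to the admissibility threshold $\eps = \pi/(3(2k+1))$ dictated by that estimate. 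Once these polynomial and remainder contributions are cleanly organized, the rest is a routine parabolic bootstrap.
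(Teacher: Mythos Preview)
The paper does not actually prove Proposition~\ref{prp-linear} here: immediately after stating it the authors write ``Proposition~\ref{prp-linear} has been derived in \cite{KnuepferMasmoudi-2012a},'' and only comment on the extra $C_t^0L_x^2$ term arising from the slightly different parabolic norm \eqref{def-PXN}. So there is no in-paper argument to compare your proposal against; the proof lives entirely in the companion article.

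That said, your outline is the standard route and is consistent with the tools the present paper imports from \cite{KnuepferMasmoudi-2012a}: the elliptic two-sided bound \eqref{est-MR-hom} is exactly the spatial input you invoke, and the trace/interpolation step you describe is precisely Lemma~\ref{lem-time-trace} of this paper (whose proof in fact uses the bilinear form $\XP{A_\eps f_0}{f_0}{k}$ as a stand-in for $\XM{f_0}{k+1/2}$, matching your energy-identity viewpoint). Your identification of the ``main obstacle''---that \eqref{est-MR-hom} is stated only for the homogeneous seminorm while \eqref{l-2} involves the full $\XN{\cdot}{k}$ with its Taylor-polynomial piece---is also the right place to focus; the decomposition $f=f_0+f_1$ with $f_1=\zeta\PP_f$ used throughout Sections~\ref{ss-X}--\ref{sec-estimates} is exactly how the companion paper handles this, and the compatibility conditions \eqref{linear-compatibility} feed the polynomial coefficients. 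In short: your plan is correct in architecture and matches what one expects the cited proof to do, but a direct line-by-line comparison is not possible from this paper alone.
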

The estimate on the nonlinear operator is stated in the following proposition:
\begin{proposition} \label{prp-nonlinear} %
  Suppose that the assumptions of Theorem \ref{thm-wedge} are satisfied. Suppose
  for $i=1,2$ that $ f_i \in \PXS{k+1}$ with $\PXN{f_i}{k+1}$. Then
  \begin{align} \label{nl-1} \hspace{3ex} & \hspace{-3ex} %
    \PXNL{N_\eps(f_1, f_1) - N_\eps(f_2, f_2)}{k}{Q_\tau} +   \TXNL{N_\eps(f_1, f_1) - N_\eps(f_2, f_2)}{0}{Q_\tau} \ \notag \\
    &\leq \ C \PXNL{f_1 - f_2}{k+1}{Q_\tau} (\PXNL{f_1}{k+1}{Q_\tau} +
    \PXNL{f_2}{k+1}{Q_\tau} \big).
   \end{align}
\end{proposition}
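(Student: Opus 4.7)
The plan is to exploit the quasi-bilinear structure of $N_\eps$ via a telescoping decomposition and then invoke the nonlinear Dirichlet--Neumann bounds on the wedge (derived in Section~\ref{sec-wedge}) together with the algebra properties of the parabolic spaces $\PXS{k+1}$. First I would write
\begin{align*}
N_\eps(f_1, f_1) - N_\eps(f_2, f_2) = \bigl(N_\eps(f_1, f_1) - N_\eps(f_2, f_1)\bigr) + \bigl(N_\eps(f_2, f_1) - N_\eps(f_2, f_2)\bigr).
\end{align*}
The first difference is \emph{linear} in $\phi = f_1 - f_2$, by the definition \eqref{def-N}: it produces a boundary-trace term $(f_1 - f_2)_x \bigl((f_{1,xx}-3\eps^2 f_{1,x}^2)/(1+\eps^2)^{5/2}\bigr)_{|x=0}$ plus the commutator-type expression $\partial_x\bigl(B_\eps^{f_1} - B_\eps^{f_2}\bigr)\bigl(f_{1,x}/(1+\eps^2(1+f_1)^2)^{3/2}\bigr)$. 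The second difference is nonlinear only in the second argument and I would write it as $\int_0^1 D_f N_\eps(f_2, sf_1 + (1-s)f_2)(f_1 - f_2)\,ds$, so that smoothness of the maps $f \mapsto (1+\eps^2(1+f)^2)^{-3/2}$ and $f \mapsto f_{xx} - 3\eps^2 f_x^2$ reduces the problem again to factors that are linear in $f_1 - f_2$.

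The three families of estimates I would then apply are: (i) the $\eps$-uniform Lipschitz bound for the nonlinear DN operator, of the form $\PXNL{\partial_x(B_\eps^{\phi_1} - B_\eps^{\phi_2})\eta}{k}{Q_\tau} \leq C \PXN{\phi_1 - \phi_2}{k+1}\bigl(1 + \PXN{\phi_1}{k+1} + \PXN{\phi_2}{k+1}\bigr)\PXN{\eta}{k+1/2}$, established by transporting the domain $K^\phi$ onto the fixed wedge $K$ via the map $\Psi^\eps$ of Lemma~\ref{lem-psi} and controlling the resulting commutators in $\YS{k}$ and $\ZS{k}$; (ii) Moser-type estimates $\XN{G(f) - G(g)}{k+1/2} \leq C\,\XN{f-g}{k+1/2}(1 + \XN{f}{k+1/2} + \XN{g}{k+1/2})$ for smooth $G$ vanishing at $0$, which hold because the weight $\delta = 1$ in \eqref{norm-intro} makes $\XS{k+1/2}$ into a pointwise algebra at the correct scale; (iii) boundary-trace control $|(f_{xx} - 3\eps^2 f_x^2)_{|x=0}(t)| \leq C \XN{f(t,\cdot)}{k+1}$ via the $C^0$-in-$t$ piece of the $\PXN{\cdot}{k+1}$ norm in \eqref{def-PXN}, and the corresponding $L^2_t$-version from the $\TXN{\cdot}{\cdot}$-part, which is what matches the trace term $(f_1-f_2)_x(\cdots)_{|x=0}$ to an $L^2(\XS{0})$ norm.

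Assembling these, every piece of the decomposition from Step~1 gets bounded by a product $\PXN{f_1 - f_2}{k+1} \cdot P(\PXN{f_1}{k+1}, \PXN{f_2}{k+1})$ with $P$ a polynomial vanishing at the origin (the cancellation $-A_\eps f$ built into \eqref{def-N} ensures that the formally $O(1)$ linear contributions from the two $B_\eps^\phi$ terms cancel when $\phi = 0$ and $f = 0$, so $P$ has no constant term; this is essential to get a factor like $\PXN{f_1}{k+1} + \PXN{f_2}{k+1}$ rather than a constant on the right-hand side of \eqref{nl-1}). Summing gives \eqref{nl-1}; the $\TXNL{\cdot}{0}{Q_\tau}$ part is extracted during the same argument since the maximal-regularity framework keeps track of both the high- and low-regularity norms simultaneously.

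The hard part will be (i), the $\eps$-uniform Lipschitz estimate for $B_\eps^{\phi}$ in $\phi$. The operator depends nonlinearly on the moving domain $K^\phi$, and the natural pull-back through $\Psi^\eps$ creates variable coefficients that must be handled in the Mellin-based, $L^\infty$-in-$v$ spaces $\YS{k}$ and $\ZS{k}$; the pointwise-in-$v$ supremum destroys analyticity in $\lambda$ and so complex interpolation is unavailable, which is precisely why the definition \eqref{Y-norm} includes all intermediate homogeneous norms $\ell \in [\tfrac{2}{3}, k]$ and why the proof must commute derivatives and the nonlinear perturbation carefully rather than relying on interpolation. Everything else is a bookkeeping exercise built on the linear maximal-regularity estimate \eqref{est-MR-hom} and the algebra/trace properties of $\XS{k+1/2}$.
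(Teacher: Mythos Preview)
Your approach is correct and matches the paper's: both rest on the bilinear pressure and trace estimates \eqref{ppf-2}, \eqref{nl-trace-2} (your item (i)), the algebra Lemma~\ref{lem-algebra}, and the time-trace Lemma~\ref{lem-time-trace}, with the cancellation against $A_\eps f$ exploited exactly as you describe. The only presentational difference is that the paper first proves the quadratic bound $\PXN{N_\eps(f)}{k}+\TXN{N_\eps(f)}{0}\leq C\PXN{f}{k+1}^2$ via the explicit trace formula \eqref{dasda} for $\partial_x B_\eps^f f_x + A_\eps f$ and then declares \eqref{nl-1} a ``straightforward extension'' through the $w-\tilde w$ estimates, whereas you telescope from the outset; one small slip is that $N_\eps(\phi,f)$ is not linear in $\phi$ (the paper notes this after \eqref{def-A0}), but since you immediately identify the correct piece $\partial_x(B_\eps^{f_1}-B_\eps^{f_2})\eta$ and treat it via (i), this is harmless.
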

Proposition \ref{prp-linear} has been derived in
\cite{KnuepferMasmoudi-2012a}.  Note the extra term $ \NNN{f}{C_t^0
  L_x^2(Q_\tau))} $ on the right-hand side of \eqref{l-2} which is due
to the slightly different definition of the norms $
\PXNL{f}{k+1}{Q_\tau} $. This extra term is needed for the nonlinear
estimate \eqref{nl-1}. The proof of Proposition \ref{prp-nonlinear} is
shown in Section \ref{sec-wedge}.  The estimates in the above two
propositions hold for every chosen time interval. The constants do not
depend on this interval.

\medskip

The proof of Theorem \ref{thm-wedge} now follows by application of a contraction
argument: For $\delta > 0$ to be fixed later, we set
\begin{align}
  E \ := \ \{ f \in \PXS{k} \ : \ \PXN{f}{k} \ < \ \delta \}.
\end{align}
The operator $S_\eps(f)$ is defined for any $f \in E$ as the solution
of \eqref{darcy-4} with fixed initial data $f_{\rm in}$ and right hand
side $N_\eps (f,f)$. Let $f_1, f_2 \in E$, with the same initial data
and let $f := f_1 - f_2$. In particular, $S_\eps(f_1) - S_\eps(f_2) $
solves \eqref{darcy-4} with vanishing initial data and right hand side
$N_\eps(f_1,f_1) - N_\eps(f_2,f_2)$.  By standard interpolation, we
have for any $f \in E$,
\begin{align} \label{first-try} %
  \NNN{f}{C_t^0 L_x^2(Q_\tau)} \ \leq \ \nltL{f_{\rm in}}{(0,\infty)} %
  + C \tau \PXNL{f}{k+1}{Q_\tau}.
  \end{align}
  Hence, there is a small but universal constant $\tau$ such that for
  $\tau = \tau_0$, we can absorb the last term on the right hand side
  of \eqref{l-2} (increasing the constant in the estimate by some
  universal factor) to get
  \begin{align} \label{l-2b} %
    \PXNL{f}{k+1}{Q_\tau} \ \leq \ C \ \big(\PXNNL{g}{k}{Q_\tau} + \XNL{f_{\rm
        in}}{k+1/2}{(0,\infty)} \big),
  \end{align}
  where we use the notation $\PXNN{\cdot}{} := \PXN{\cdot}{} + \TXN{\cdot}{0}$.
  By \eqref{l-2b} and in view of \eqref{nl-1}, we hence get
 \begin{align*}
   \hspace{6ex} & \hspace{-6ex} %
   \PXN{S_\eps(f_1) - S_\eps(f_2)}{k+1} \
   \upref{l-2b}\leq \ C \PXNN{N_\eps(f_1, f_1) - N_\eps(f_2, f_2)}{k} \\
   &\upref{nl-1}\leq \ C \PXN{f_1 - f_2}{k+1} \ \big(\PXN{f_1}{k+1} + \PXN{f_2}{k+1} \big) \\
   &\leq \ C \delta \PXN{f_1 - f_2}{k+1}.
 \end{align*}
 Hence, $S_\eps$ is a contraction if $\delta > 0$ and $\tau$ are  chosen sufficiently
 small. Similarly, by \eqref{l-2b}, \eqref{nl-1} and since $N_\eps(0) = 0$, we
 get
\begin{align*}
  \PXN{S_\eps(f)}{k+1} \ \lupref{l-2b}\leq \ C \PXNN{N_\eps(f)}{k} + C
  \XN{f_{\rm in}}{k+1/2} \ \upref{nl-1}\leq \ C\alpha + C\delta^2
\end{align*}
and hence $S_\eps(E) \subseteq E$ for $\delta$ and $\alpha =
\alpha(\delta)$ sufficiently small. Therefore, application of Banach's
Fix-point Theorem yields existence and uniqueness of a solution of
\eqref{darcy-4} on the time interval $(0,1)$. In order to recover
long-time existence, we use dissipation of energy
\eqref{E-darcy}. Indeed, by \eqref{E-darcy}, we have
  \begin{align}
    \NNN{f}{C_t^0 L_x^2(Q_\tau)} \ \leq \ C \nltL{f_{\rm in}}{(0,\infty)} \ \leq
    \ C \XN{f_{\rm in}}{k+1/2}.
  \end{align}
  Using this estimate instead of \eqref{first-try}, we get estimates that are
  independent of the time interval. This shows long-time existence and also the
  estimate of $f$ in \eqref{est-wedge}.  In order to conclude the proof of
  Theorem \ref{thm-wedge}, it remains to prove the uniform bound
  \eqref{est-wedge} on $p$.  This estimate follows from the estimate in
  Proposition \ref{prp-p-nonlinear}.

\subsection{Proof of Theorem \ref{thm-convergence}} 

By Theorem \ref{thm-wedge}, we have the uniform bound $\PXN{f^\eps}{k+1} \leq C
\alpha$ for all $\eps > 0$. We use the optimal decomposition into high and low
frequencies $f^\eps = f^\eps_+ + f^\eps_-$ from \eqref{def-fpm}. By
\eqref{def-fpm}, this decomposition commutes with the time derivative. We have
uniform bounds on the norms
$\nltL{x^{2j+1} \partial_t^i \partial_x^{4j+1}f^\eps_+}{(0,\infty)^2}$ and
$\nltL{x \partial_t^i \partial_x^{3j+1}f^\eps_-}{(0,\infty)^2}$ for all $i$, $j$
with $i+j \leq k+1$. Standard compactness applied to both $f^\eps_\pm$ then show
that in particular there is $f = f^0_+ + f^0_- \in \PXStfe{k}$ and a subsequence
$\eps_j \to 0$ such that
\begin{align} \label{conv-1} %
    \PXNeps{f^{\eps_j} -f}{k}{\eps_j} \to 0 && %
    \text{as $j \to \infty$.}  
\end{align}
Now, let $p^{\eps_i}$, $p^{\eps_j}$ be the pressure related to $f^{\eps_i}$ and
$f^{\eps_j}$. By \eqref{ppf-2}, we then have
\begin{align*} % 
  \PYNeps{p^{\eps_i} - p^{\eps_j}}{k}{\eps_j} \ \leq \ C \PXNeps{f^{\eps_i} -
    f^{\eps_j}}{k}{\eps_j} (\PXNeps{f^{\eps_i}}{k}{\eps_j} +
  \PXNeps{f^{\eps_j}}{k}{\eps_j}) \ %
  \lupref{conv-1}\leq \ C\alpha \PXNeps{f^{\eps_i} - f^{\eps_j}}{k}{\eps_j} \to
  0
\end{align*}
as $i,j \to \infty$, $j \leq i$. This shows that $p^{\eps_j}$ converges in
$\PYS{k}$ thus concluding the proof of \eqref{eq-convergence}.

\medskip

It remains to show that $f$ solves the thin-film equation \eqref{tfe-2}.  By the
above uniform bounds, by \eqref{X-sob} and by \eqref{Y-sob}, we have $f^\eps \to
f$ in $L^2(H^4)$, $f^\eps_t \to f_t$ in $L^2(L^2)$ and $p^\eps \to p$ in
$L^2(H^1)$ with $\NNN{p_y^\eps}{L^2(L^2)} \leq C\eps \to 0$.  The boundary
condition $p^\eps = h_{xx}(1+\eps^2 h_x^2)^{-\frac 32}$ hence implies that in
the limit $\eps = 0$, we get $p = h_{xx}$.  We will show that in the limit
$\eps=0$, $h$ is a solution of the thin-film equation, where we recall that
$f^\eps = h^\eps_{x} - 1$. In particular, $h^\eps \to h$ in $L^2(H^4)$ and
$h_t^\eps \to h_t$ in $L^2(L^2)$. Correspondingly, we also have convergence of
the velocity $u^\eps = (v^\eps, w^\eps) = \nabla_\eps p^\eps \to u = (v, w) =
(p_x, 0)$ in $L^2(L^2)$. The transition to the limit now can be conveniently
done in terms of the continuity equation: By conservation of mass for
\eqref{darcy-4}, we have
\begin{align} \label{mass-eps} %
  h^\eps_t + \Big( \int_0^{h^\eps(t,x)} v^\eps \ d\tilde x \Big)_x \ =
  \ 0.
\end{align}
In the limit $\eps \to 0$ and in view of the above discussion, \eqref{mass-eps}
turns into
\begin{align} \label{mass-tfe} %
  h_t + \Big( \int_0^{h(t,x)} v \ d\tilde x \Big)_x \ = \ h_t + (h
  p_x)_x \ = \ h_t + (h h_{xxx})_x \ = \ 0.
\end{align}

\subsection{Proof of Theorem \ref{thm-local}} 

We prove this theorem by an application of the Inverse Function Theorem.  For
this, we linearize $ {\mathcal L}_\eps$ at an `approximate solution' $w$,
constructed with the help of an extension lemma. We then show boundedness and
differentiability for $ \LL_\eps$ and invertibility and maximal regularity for
its linearization $\delta \LL_\eps^w$ at $w$.  We keep the details
brief and refer to similar arguments in
\cite{DaskalopoulosHamilton-1998,GiacomelliKnuepfer-2010,Angenent-1988-1}.

\medskip

We define the `spatial part' $\AA_\eps$ of the operator $ \LL_\eps$ by $\LL_\eps
f= D f_t + \AA_\eps f$, i.e.
\begin{equation*} % \label{def-A2}
  \AA_\eps f \ := \ %
  D_t f - %
  \partial_x \Big[ \big((1- x) \dot s_- +
  x \dot s_+ \big) (f+(x(1-x))_x) + B_\eps^f
  \Big( \frac{f_x-2}{(1 +  \eps^2 (f+(x(1-x))_x)^2)^{\frac 32}} \Big)
  \Big],
\end{equation*}
see \eqref{def-LL1}.  We will use the following extension Lemma
\begin{lemma}[Extension Lemma] \label{lem-extension} %
  Let $k \in \N$ with $k \geq 1$. For any $f_{\rm in}\in \XSL{k+1/2}{\EEEE}$,
  $g_{\rm in} \in \XSL{k-1/2}{\EEEE}$ there exists $w \in \PXSL{k+1}{[0,\infty)
    \times E}$ such that $w_{|t=0} = f_{\rm in}$, $w_{t{|t=0}} = g_{\rm in}$ and
  \begin{align} \label{def-2-w} %
    \PXNL{w}{k+1}{[0,\infty) \times E} \ %
    \leq \ C \big(\XNL{f_{\rm in}}{k+1/2}{\EEEE} + \XNL{g_{\rm in}}{k-1/2}{\EEEE} \big).
  \end{align}
\end{lemma}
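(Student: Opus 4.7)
The plan is to construct $w$ explicitly through its Mellin transform, since by \eqref{norm-lambda} the spaces $\XS{k}$ have their cleanest description in frequency variables. First, using the cutoff decomposition \eqref{norm-E}, I reduce to half-line data near a single endpoint, say $x=0$. Extracting the Taylor polynomials $\PP_{f_{\rm in}}$ and $\PP_{g_{\rm in}}$ of $f_{\rm in}$ and $g_{\rm in}$ at the origin, of orders $n_{k+1/2}$ and $n_{k-1/2}$ respectively (as in \eqref{norm}), and setting
\[
  w_{\rm poly}(t,x) \ := \ \zeta(x)\bigl(\PP_{f_{\rm in}}(x) + t\,\PP_{g_{\rm in}}(x)\bigr),
\]
reduces the problem to homogeneous data $f_{\rm in,0} \in \XSoo{k+1/2}$ and $g_{\rm in,0} \in \XSoo{k-1/2}$, on which the Mellin transform applies directly.

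For the homogeneous piece, the natural symbol is $\sigma(\lambda):=|\lambda|^3\mu(\lambda)$: by \eqref{norm-lambda} and \eqref{est-MR-hom}, multiplication by $\sigma$ shifts the regularity index of the spaces $\XS{k}$ by one and plays the role of the Mellin symbol of $A_\eps$. Choose profiles $\phi_1,\phi_2 \in \cciL{[0,\infty)}$ with $\phi_1(0)=1$, $\phi_1'(0)=0$, $\phi_2(0)=0$, $\phi_2'(0)=1$, and define $w_0$ by
\[
  \widehat{w_0}(t,\lambda) \ := \ \phi_1\bigl(t\,\sigma(\lambda)\bigr)\,\widehat{f_{\rm in,0}}(\lambda) \ + \ \sigma(\lambda)^{-1}\,\phi_2\bigl(t\,\sigma(\lambda)\bigr)\,\widehat{g_{\rm in,0}}(\lambda),
\]
recovering $w_0$ in physical variables via the inversion formula \eqref{mellin-inverse}. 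By construction $w_0(0,\cdot)=f_{\rm in,0}$ and $\partial_t w_0(0,\cdot)=g_{\rm in,0}$, and the compact support of the $\phi_j$ forces rapid decay in $t$; set $w := w_0 + w_{\rm poly}$.

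To establish \eqref{def-2-w}, apply Plancherel's identity \eqref{plancherel} to each term in \eqref{def-PXN}. A direct computation yields
\[
  \int_0^\infty \sigma(\lambda)^{2i}\bigl|\phi_1^{(i)}\bigl(t\,\sigma(\lambda)\bigr)\bigr|^2\,dt \ \lesssim \ \sigma(\lambda)^{2i-1},
\]
so that for every $(i,j)$ with $i+j=k+1$ the $L^2_t\XS{j}$-norm of $\partial_t^i w_0$ reduces to the $|\lambda|^{6k+5}\mu^{2k+1}$-weighted $L^2$-norm of $\widehat{f_{\rm in,0}}$, which is precisely $\XN{f_{\rm in,0}}{k+1/2}^2$; the $g_{\rm in,0}$-contribution is analogous. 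The trace norms $C^0_t\XS{j+1/2}$ for $i+j\leq k$ and the $C^0_t L^2_x$-term are handled by the same symbolic calculus using the uniform bounds on $\phi_j^{(i)}$, while $w_{\rm poly}$ is controlled directly from the explicit $\PXS{k+1}$-norms of $\zeta(x)x^m$ and $\zeta(x)x^m t$.

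The main obstacle is uniformity in $\eps$: the symbol $\sigma=|\lambda|^3\mu$ switches between $|\lambda|^4$ for $|\lambda|\geq 1/\eps$ and $|\lambda|^3/\eps$ for $|\lambda|<1/\eps$, and one must check that the above $t$-integrals and supremum bounds assemble consistently across this threshold. I address this by splitting the Mellin contour at $|\lambda|=1/\eps$ and tracking the two regimes separately; the resulting high- and low-frequency contributions reproduce exactly the $f_+/f_-$ decomposition built into \eqref{norm-integer}, so that all estimates are ultimately independent of $\eps$.
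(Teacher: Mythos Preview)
Your construction has a real gap: the symbol $\sigma(\lambda)=|\lambda|^3\mu(\lambda)$ is \emph{not holomorphic} in $\lambda$, whereas the Mellin description of the norms $\XM{\cdot}{\ell}$ in \eqref{norm-lambda} lives on the line $\Re\lambda=3\ell-\tfrac12$, which moves with $\ell$. The Mellin transform of any fixed function is holomorphic in its strip of convergence, and its values on different vertical lines are related by analytic continuation. If you set $\widehat{w_0}(t,\lambda)=\phi_1(t\,\sigma(\lambda))\,\widehat{f_{\rm in,0}}(\lambda)+\sigma(\lambda)^{-1}\phi_2(t\,\sigma(\lambda))\,\widehat{g_{\rm in,0}}(\lambda)$ and invert via \eqref{mellin-inverse} on one line, the actual Mellin transform of the resulting $w_0$ on any other line is the analytic continuation of that expression from the line of inversion --- it is \emph{not} obtained by evaluating your non-analytic formula there. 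Hence the step ``apply Plancherel \eqref{plancherel} to each term in \eqref{def-PXN}'' is only justified on the single line of inversion; on the remaining lines (needed for $\XM{\partial_t^i w_0}{j}$ with varying $j$) you have no explicit expression to estimate. The frequency splitting at $|\lambda|=1/\eps$ you discuss addresses $\eps$-uniformity along a fixed line, not this analyticity problem.

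The paper does not write out a proof, but the route it indicates --- build $w$ from solutions of $f_t+A_\eps f=0$ via Proposition~\ref{prp-linear}, as in \cite{GiacomelliKnuepfer-2010} --- avoids exactly this issue: the linear semigroup has Mellin symbol $e^{-t\,a(\lambda)}$ with $a(\lambda)$ the \emph{holomorphic} symbol of $A_\eps$, and $|a(\lambda)|\sim|\lambda|^3\mu$ on each admissible line by \eqref{est-MR-hom}. Replacing your $\sigma$ by $a$ (equivalently, taking $e^{-tA_\eps}f_{\rm in,0}$ together with a second piece built from $e^{-tA_\eps}$ applied to $A_\eps^{-1}g_{\rm in,0}$, or a suitable Duhamel variant) restores holomorphy across the strip, after which your Plancherel bookkeeping goes through essentially as written and is then the paper's argument in Mellin disguise. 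A secondary point: $w_{\rm poly}=\zeta(x)\bigl(\PP_{f_{\rm in}}+t\,\PP_{g_{\rm in}}\bigr)$ grows linearly in $t$ and so fails both the $L^2_t$-terms and the $C^0_tL^2_x$-term in \eqref{def-PXN} on $[0,\infty)$; you need a decaying time profile (or a cut-off matched to the semigroup piece) for the polynomial part as well.
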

The extension can be constructed by gluing together solutions of the linear
equation given by Proposition \ref{prp-linear}. The methods used in
\cite{GiacomelliKnuepfer-2010} can also be used for our equation so that we will
not present the argument here. For $f_{\rm in} \in \XS{k}$, let $g_{\rm
  in}=-\AA_\eps f_{\rm in} $ and we choose $w \in \PXSL{k+1}{[0,\infty) \times
  E}$ as in Lemma \ref{lem-extension}. In particular,
\begin{equation} \label{app-sol} %
    \LL_\eps w |_{t=0} \ = \ D\partial_t w|_{t=0} + \AA_\eps f_{\rm in}  \ %
    \stackrel{D|_{t=0}=1}{=} \ g_{\rm in}-g_{\rm in} \ = \ 0
\end{equation} 
and $w$ may in this sense be called an approximate solution.  Let $\delta
\mathcal L_\eps^w$ be the linearization of $\mathcal L_\eps $ around $w$. We
have boundedness and differentiability of $\LL_\eps$ and boundedness of $ (
\delta \MM(w) )^{-1}$ for $\tau$ small enough:
\begin{proposition} \label{prp-local-linear} %
  Let $k \geq 1$, $k \in \N$ and suppose that $f_{\rm in} \in
  \XSL{k+1/2}{E}$ with $\int f_{\rm in} dx = 0$ and $g \in \PXSL{k}{Q_\tau}$
  satisfy \eqref{linear-compatibility}. Let $w \in \XSL{k+1}{[0,\infty) \times
    E}$ be defined as in Lemma \ref{lem-extension} with $g_{\rm in} :=
  g_{|t=0}$. Then for sufficiently small $\tau > 0$ and with the notation
  $Q_\tau = (0,\tau) \times E$, there exists a unique $f \in
  \PXSL{k+1}{Q_\tau}$, solution of $\delta \mathcal L_\eps(w) f \ = \ g$ in
  $Q_{\tau}$ with $f = 0$ on $(0,\tau) \times \partial \EEEE$ and with initial
  data $f = f_{\rm in}$. Furthermore,
  \begin{align} \label{est-linloc} %
    \PXNL{f}{k+1}{Q_\tau} \ \leq \ C \big(\PXNL{g}{k}{Q_\tau} + \XNL{f_{\rm
        in}}{k+1/2}{\EEEE} \big).
  \end{align}
\end{proposition}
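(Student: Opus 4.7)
The plan is to derive maximal regularity for the linearization $\delta \LL_\eps(w)$ by localizing near the two moving contact points $x=0$ and $x=1$, reducing each localized problem to the wedge problem of Proposition \ref{prp-linear}, and closing the estimate by a short-time perturbation argument. Since the proposition is the linear maximal regularity backbone for the Inverse Function Theorem application in Theorem \ref{thm-local}, this two-step reduction mirrors the structure already used in Theorem \ref{thm-wedge}.

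First, I would use the partition of unity $1 = \zeta(x) + \zeta(1-x)$ intrinsic to the definition of $\XNL{\cdot}{k+1/2}{[0,1]}$ in \eqref{norm-E} to decompose $f = f^L + f^R$ and $g = g^L + g^R$. Freezing $w$ at the contact point $x=0$ (where $w_x \approx -1$ since $w$ extends $f_{\rm in}$ which is a perturbation of $f^* = 1-2x$) and setting $D = D(0) = 1$, $\dot s_\pm = 0$ recovers, modulo smoothing terms, the wedge operator $\partial_t + A_\eps$ from \eqref{def-A}, after the local change of coordinate that matches \eqref{coord-global}. A symmetric reduction holds near $x=1$. Applying Proposition \ref{prp-linear} to the localized pieces $f^L, f^R$ (extended trivially to $(0,\infty)$) yields, on each half,
\begin{equation*}
\PXNL{f^{L/R}}{k+1}{Q_\tau} \ \leq \ C \bigl( \PXNL{g^{L/R}}{k}{Q_\tau} + \XNL{f_{\rm in}^{L/R}}{k+1/2}{(0,\infty)} + \NNN{f^{L/R}}{C_t^0 L_x^2} \bigr).
\end{equation*}

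Next, one writes $\delta \LL_\eps(w) = \LL_\eps^L + \LL_\eps^R + \mathcal R$ where the remainder $\mathcal R$ collects: the transport contributions from $\dot s_\pm(t)$ and $\dot D(t)$ computed from $w$ via \eqref{speed-local}; the differentiation of the curvature denominator $(1+\eps^2(w_x+f^*_x)^2)^{-3/2}$; and the linearized nonlocal contribution $\delta B_\eps^w$. By Lemma \ref{lem-extension}, $w$ is of class $\PXSL{k+1}{[0,\infty) \times E}$ with norm controlled by the initial data, and $w - w_{|t=0}$ is of order $\tau^\alpha$ in weaker norms. This makes $\mathcal R : \PXSL{k+1}{Q_\tau} \to \PXSL{k}{Q_\tau}$ a small perturbation for small $\tau$, allowing absorption of $\PXNL{\mathcal R f}{k}{Q_\tau}$ into the left-hand side after an interpolation of the type \eqref{first-try} to handle the $C_t^0 L_x^2$ term. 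Existence then follows by a Neumann series; uniqueness is an immediate consequence of the same a priori estimate applied to the difference of two solutions.

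The main obstacle is handling the linearized nonlocal operator $\delta B_\eps^w$. Linearizing the Dirichlet-to-Neumann map $B_\eps^w$ with respect to the free boundary $K^w$ produces terms involving the pressure $\overline p^w$ on all of $\Omega$, which couples the two contact points and precludes a clean pointwise localization. The resolution is to invoke the pressure estimates in the $\YSL{k}{\Omega}$-scale developed in Section \ref{ss-Y}, together with the exponential decay of the Mellin symbol $e^{\tfrac12 \eps|\lambda|(1-v)}$ appearing in \eqref{Y-hom}: this decay ensures that the cross-terms between $f^L$ and $f^R$ (whose supports are separated by a distance of order one) contribute only exponentially small perturbations in $|\lambda|$, which are absorbed by the weighted norms via Hardy's inequality \eqref{hardy}. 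A secondary subtlety is that $\dot s_\pm(t)$ and $D(t)$ depend on $f$ via \eqref{speed-local}; since this dependence is of strictly lower differential order than the principal part, it likewise enters as part of $\mathcal R$ and is controlled for small $\tau$.
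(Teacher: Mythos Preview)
Your overall strategy---localize near the contact points, reduce to Proposition~\ref{prp-linear}, and absorb the remainder---matches the paper in spirit, but there is a genuine gap in how you obtain smallness of the remainder $\mathcal R$.

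You localize using the fixed cut-off $\zeta$ from the definition \eqref{norm-E}, giving only two pieces $f^L, f^R$ whose supports extend well into the interior of $[0,1]$. But the wedge approximation $\partial_t + A_\eps$ is only close to $\delta\LL_\eps^w$ in a neighborhood of the contact point where $|h^w_x - 1|$ is small; near the middle of the interval it is not close at all. Hence $\mathcal R$ is \emph{not} small on $\supp\zeta$, and no amount of shrinking $\tau$ fixes this---the discrepancy is of order one in space. The paper's remedy is to introduce a second small parameter $\delta$ and a \emph{three}-piece partition of unity: two boundary pieces supported in $(0,2\delta)$ and $(1-2\delta,1)$, where the wedge approximation error is $O(\delta)$ (Lemmas~\ref{lem-difference} and~\ref{lem-commutator}), and an interior piece on $(\delta,1-\delta)$ where the operator is uniformly parabolic and classical theory applies. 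One first chooses $\delta$ small to absorb the spatial error, and only \emph{then} chooses $\tau = \tau(\delta)$ small to absorb the lower-order term $C_\delta\PXNL{f}{k}{Q_\tau}$ via $\PXNL{f}{k}{Q_\tau} \le C\tau^{1/2}\PXNL{f}{k+1}{Q_\tau}$.

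Your handling of the nonlocal cross-terms is also not quite right. The exponential factor $e^{\frac12\eps|\lambda|(1-v)}$ in \eqref{Y-hom} encodes angular decay inside the wedge, not decay in $x$; it does not by itself make the interaction between the two contact points small. What the paper actually shows (in the proofs of \eqref{byloc} and Lemmas~\ref{lem-difference}--\ref{lem-commutator}) is that localizing the elliptic problem for the pressure produces right-hand sides supported where the cut-off is differentiated, and these terms are of \emph{lower order} (one fewer derivative of $p$), hence controllable by $C_\delta\PXNL{f}{k}{Q_\tau} + C\delta\PXNL{f}{k+1}{Q_\tau}$ rather than by exponential smallness.
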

Note that the condition $\int_0^1 f(x) dx = 0$ is preserved by the flow
generated by $\delta \mathcal L_\eps(w) f$ .  We also have differentiability of
$\LL_\eps$ in a neighborhood of $w$:
\begin{proposition} \label{prp-local-nonlinear} %
  Suppose that $f_{\rm in} \in \XSL{k+1/2}{E}$ satisfies \eqref{compatibility}
  and $\int_E f_{\rm in} dx = 0$ and let $w$ be defined as in Lemma
  \ref{lem-extension} with $g_{\rm in} = - \AA_\eps f_{\rm in}$. Then for
  sufficiently small $\tau > 0$ there is $\alp > 0$ such that $\MM_\eps:
  \PXSL{k+1}{Q_\tau} \to \XSL{k+1/2}{E} \times \PXSL{k}{Q_\tau}$ is bounded and
  continuously differentiable in the $\alp$-neighborhood of $w$ in
  $\PXSL{k+1}{Q_\tau}$.
\end{proposition}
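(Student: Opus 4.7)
The operator is $\MM_\eps(f) = (f|_{t=0}, \LL_\eps f)$, and my plan is to verify (i) boundedness on an $\alpha$-ball about $w$ and (ii) continuous Fr\'echet differentiability there, by assembling the localized estimates of Section \ref{sec-local} with the product/Nemytskii calculus for the weighted spaces $\XSL{k}{E}$. The trace component $f \mapsto f|_{t=0}$ is linear and bounded from $\PXSL{k+1}{Q_\tau}$ to $\XSL{k+1/2}{E}$ by the $C^0_t$-embedding already built into \eqref{def-PXN}, so it is automatically smooth and contributes nothing beyond trivial estimates.

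For the principal component I would decompose $\LL_\eps f$ according to \eqref{def-LL1} into (a) the time derivative $D f_t + D_t f$, (b) the transport-like piece $\partial_x[((1-x)\dot s_- + x\dot s_+)(f+f^*)]$, and (c) the nonlocal pressure piece $\partial_x B_\eps^f\bigl(\tfrac{f_x+f_x^*}{(1+\eps^2(f-2)^2)^{3/2}}\bigr)$. The scalars $s_\pm, D$ are defined by the ODE \eqref{speed-local} driven by the boundary traces $f_{xx}(t,0), f_{xx}(t,1)$, which are controlled for $f \in \PXSL{k+1}{Q_\tau}$; standard Picard theory then yields their $C^\infty$-dependence on $f$ with uniform bounds on $[0,\tau]$ for $\tau$ small. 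Pieces (a) and (b) are controlled by weighted-Sobolev product estimates in $\XSL{k}{E}$; the rational factor in (c) is a smooth Nemytskii map by the Sobolev embedding in Proposition \ref{prp-X}; and the action of $B_\eps^f$ is estimated by the localized pressure bounds of Section \ref{sec-local} in the form $\PXNL{\partial_x B_\eps^f \psi}{k}{Q_\tau} \leq C\, \PXNL{\psi}{k+1}{Q_\tau}$, uniformly for $f$ in the $\alpha$-ball.

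For continuous differentiability I would compute $\delta\MM_\eps(f)[\phi]$ piece by piece: (a) and (b) are polynomial in $(f, D, s_\pm)$ with smooth parameter dependence, and the rational nonlinearity in (c) is a smooth Nemytskii map by the same embedding. The only genuinely nonlinear ingredient is $f \mapsto B_\eps^f$, which I would treat by pulling the extension problem \eqref{B-phys} back to the fixed reference wedge $K$ via the diffeomorphism $\Psi^\eps(f): K \to K^f$ from \eqref{psi-form}. Under this pullback the harmonic extension becomes a variable-coefficient elliptic equation on $K$ whose coefficients depend polynomially on $f$ and its derivatives, so that $\delta B_\eps^f[\phi]$ is well-defined; continuity of $\delta\MM_\eps$ then reduces to a bilinear difference estimate of the form
\begin{align*}
  \PXNL{(B_\eps^{f_1} - B_\eps^{f_2})\psi}{k}{Q_\tau} \ \leq \ C\, \PXNL{f_1-f_2}{k+1}{Q_\tau}\, \PXNL{\psi}{k+1}{Q_\tau},
\end{align*}
in direct analogy with Proposition \ref{prp-nonlinear} for the wedge case.

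The hard part will be to make these estimates quantitatively uniform in $\eps$ while simultaneously respecting the degenerate weighted structure of the $\XSL{\cdot}{E}$ and $\YSL{\cdot}{\cdot}$ spaces near \emph{both} contact points $x=0$ and $x=1$. Concretely, one must show that the diffeomorphism $\Psi^\eps(f)$ can be built with uniform-in-$\eps$ bounds respecting those weights, that the pulled-back elliptic problem enjoys maximal $\YSL{k}{\cdot}$-regularity uniformly for $f$ in the $\alpha$-ball and $\eps \in (0, \pi/(3(2k+1)))$, and that the two-sided localization to $E = (0,1)$ does not destroy the cancellation structure exploited in the wedge analysis. Granted these estimates from Section \ref{sec-local}, the proposition follows by combining them with the product/Nemytskii calculus and the smooth ODE-dependence of $(D, s_\pm)$ on the boundary traces of $f$.
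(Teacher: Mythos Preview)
Your proposal is correct and follows essentially the same route as the paper's own proof: handle the trace component via the time-trace embedding (the paper invokes Lemma~\ref{lem-time-trace}), reduce boundedness of $\LL_\eps$ to the localized pressure estimates of Proposition~\ref{prp-p-nonlinear} and Lemma~\ref{lem-pressure-trace} for the principal term $\partial_x B_\eps^f$, treat the $\dot s_\pm$ contributions by direct product bounds, and observe that $\DLL_\eps^w$ has the same structure so that the same machinery applies. The one minor difference is that for continuity of the derivative the paper argues by bounding the \emph{second} derivative of $\LL_\eps$ (again by the same structural argument), whereas you argue via a bilinear difference estimate on $B_\eps^{f_1}-B_\eps^{f_2}$; both are standard and equivalent here.
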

The proof of the above two propositions is given in Section \ref{sec-local}.
Using the above two propositions, the proof of Theorem \ref{thm-local} follows
by application of the inverse function theorem: We claim that the operator
\begin{align*} %
  \MM_\eps : \PXSL{k+1}{Q_\tau} \to \PXSL{k+1/2}{\EEEE} \times \PXSL{k}{Q_\tau}
  && %
  \text{with} && %
  f \ \mapsto \ \MM_\eps(f) \ := \ (f_{|t=0}, \mathcal L_\eps f )
\end{align*}
is bounded, continuously differentiable near $w$ and $\delta \MM_\eps(w)$ is
invertible with bounded inverse for $\tau$ small enough.
  We define $v := \LL_\eps (w) $.  By the
inverse mapping theorem there is a neighborhood of $w$ and a neighborhood of
$(f_{\rm in}, v)$ where $\mathcal M_\eps$ is a diffeomorphism. By
\eqref{app-sol} we have $v_{|t=0} = 0$. Since $\cciL{\overline Q_\tau}$ is dense
in $\PXSL{k}{Q_\tau}$, it follows that $\PXNL{v}{k}{Q_\tau} \to 0$ for $\tau \to
0$. Hence, there is $\tilde \tau \in (0, \tau)$ and a function $\tilde v \in
Y_{\tau}$ with $\tilde v = 0$ for $t \in (0, \tilde \tau)$ and such that $(0,
\tilde v)$ is sufficiently near $(0,v)$.  Hence, there is $f \in X_\tau$ with
$\mathcal M_\eps(f) = (0,\tilde v)$.  The function $f$ is a solution of $
\LL_\eps f = 0 $ and hence $h(x) = x(1-x) + \int_0^x f(x') dx'$ is a solution of
\eqref{tfe-fixed} for $t \in (0,\tilde \tau)$, thus concluding the proof of
Theorem \ref{thm-local}.

\subsection{Proof of Theorem \ref{thm-tfe}} 

By Theorem \ref{thm-convergence}, we have existence and regularity of solutions
of \eqref{tfe-fixed} for initial data which are close to the infinite wedge. In
order to show Theorem \ref{thm-tfe}(1) it hence remains to prove uniqueness of
solutions. For this, it is enough to show that the corresponding results in
Propositions \ref{prp-linear} and \ref{prp-nonlinear} also hold in the case
$\eps=0$ and for the operators $A_0$ and $N_0$. The estimate for existence,
regularity and uniqueness in the case of half-space then follows by the same
fix-point argument as for Theorem \ref{thm-wedge}.

\medskip

The $\eps=0$ version of Proposition \ref{prp-linear} has been proved in
\cite{KnuepferMasmoudi-2012a}. The $\eps=0$-version of Proposition
\ref{prp-nonlinear} can be obtained by analogous estimates as the one's applied
in the proof of Proposition 8.1 in \cite{GiacomelliKnuepferOtto-2008}. In fact,
the estimate is easier in our situation since we only need for an estimate in
weighted Sobolev spaces, not the interpolation spaces used in
\cite{GiacomelliKnuepferOtto-2008}. Finally, we note that the local result in
Theorem \ref{thm-tfe} can be obtained by standard localization techniques. The
argument can be performed analogously as our localization argument in Section
\ref{sec-local}; the argument is easier since for $\eps = 0$, the norms
$\XN{\cdot}{k}$ are local. We also refer to a similar localization argument in
\cite{GiacomelliKnuepfer-2010}, performed for a thin-film equation in a H\"older
space setting.

\section{Estimates in weighted spaces} \label{sec-estimates}
 
 We recall some basic properties of the space $\XS{k}$ (see Proposition 2.3 of \cite{KnuepferMasmoudi-2012a}):
 \begin{proposition} \label{prp-X} %
   Let $k \in \R$ with $k \geq 0$ and let $f \in \XS{k}$.
  \begin{enumerate}
  \item For $0 \leq \eps \leq \eps'$, we have $\XS{k} \subseteq X_{\eps'}^{k}$
    and for all $f \in \XS{k}$
    \begin{align} \label{X-mon} %
      [f]_{X_{\eps'}^{k}} \ \leq \ \XM{f}{k}.
    \end{align}
  \item For any $\ell_1, \ell_1 \in \N_0$ with $0 \leq \ell_1 \leq 3k$ and $0
    \leq \ell_2 < 3k-\frac 12$, we have
    \begin{gather}   \label{X-sob} 
      \begin{aligned}
        \nlt{\partial_x^{\ell_1} f} + \nco{\partial_x^{\ell_2} f} \ \leq \
        C \XN{f}{k}.
      \end{aligned}
    \end{gather}
  \item If $f \in \XS{k}$, then $\partial_x f \in \XS{k-1/2}$,
    $\partial_x^2 f$, $\partial_x^3 f \in \XS{k-1}$ and
      \begin{align}
        \XN{f_x}{k-1/2} + \XN{f_{xx}}{k-1} + \XN{f_{xxx}}{k-1} \ \leq \ C \XN{f}{k}. \label{X-three}
      \end{align}
  \end{enumerate}
\end{proposition}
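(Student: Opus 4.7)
The plan is to carry out all three assertions on the Mellin side, using Plancherel's identity \eqref{plancherel}, the basic Mellin calculus $\widehat{f_x}(\lambda) = (\lambda+1)\hat{f}(\lambda+1)$ and its iterates $\widehat{\partial_x^\ell f}(\lambda) = \prod_{j=1}^{\ell}(\lambda+j)\,\hat{f}(\lambda+\ell)$, together with analyticity of $\hat{f}_0$ in an appropriate strip. Throughout I use the decomposition $f = \zeta \PP_f + f_0$ of \eqref{norm}: the polynomial part contributes only bounded low-order derivatives and is absorbed into the $\NNN{\PP_f}{}$-term, so in every step the real work is to bound $f_0$ in terms of $\XM{f_0}{k}$ and $\nlt{f_0}$.

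For (1), the only $\eps$-dependent factor in \eqref{norm-lambda} is $\mu^k = \min\{|\lambda|,\feps\}^k$, and this is pointwise monotone decreasing in $\eps$. Thus for $\eps\le\eps'$ the integrand of $\XM{f}{k}^2$ dominates pointwise the integrand of $[f]_{X_{\eps'}^{k}}^2$ at every $\lambda$ on the critical line, and \eqref{X-mon} follows.

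For (2), Plancherel at the line $\Re\lambda=-\tfrac12$ together with the Mellin-derivative formula gives, after a shift of variable,
\[
\nlt{\partial_x^{\ell_1} f_0}^2 \ = \ \Bigl\|{\textstyle\prod_{j=1}^{\ell_1}}(\lambda-\ell_1+j)\,\hat f_0(\lambda)\Bigr\|_{L^2(\Re\lambda=\ell_1-\tfrac12)}^2.
\]
Since $f_0$ has vanishing Taylor data of order $n_k$ at $0$ and decays at infinity, $\hat f_0$ is analytic in a strip containing both lines $\Re\lambda=-\tfrac12$ and $\Re\lambda=3k-\tfrac12$. Combining $\nlt{f_0}$ at the left endpoint with $\XM{f_0}{k}$ at the right endpoint and applying the Hadamard three-lines principle yields bounds at every intermediate line, covering $0\le\ell_1\le 3k$. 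For the $L^\infty$ part I use the inverse Mellin formula \eqref{mellin-inverse} with $\beta=\ell_2+\tfrac12$ to write
\[
|\partial_x^{\ell_2} f_0(x)| \ \le \ \int_{\Re\lambda=\beta}{\textstyle\prod_{j=1}^{\ell_2}}|\lambda-\ell_2+j|\,|\hat f_0(\lambda)|\DI{\lambda},
\]
and then close by Cauchy--Schwarz with weight $|\lambda|^{1+\delta}$ for small $\delta>0$; this requires $\ell_2<3k-\tfrac12$, which is exactly the stated range.

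For (3), the identity $\widehat{f_x}(\lambda)=(\lambda+1)\hat f(\lambda+1)$ shows that after a substitution $\widetilde\lambda=\lambda+1$, the homogeneous norm $\XM{f_x}{k-\tfrac12}$ becomes an $L^2$-integral over $\Re\widetilde\lambda=3k-1$ of $|\widetilde\lambda|^{3k+\tfrac12}\mu^{k-\tfrac12}|\hat f_0(\widetilde\lambda)|$. This differs from the integrand of $\XM{f_0}{k}$ only by a one-unit contour shift from $\Re\lambda=3k-\tfrac12$ to $\Re\lambda=3k-1$ and by exchanging one power of $|\lambda|$ for one power of $\mu^{-1/2}|\lambda|^{1/2}$; the former is handled by analyticity of $\hat f_0$ together with absorption of residues into $\NNN{\PP_f}{}$, and the latter by the trivial inequality $|\lambda|\ge\mu$. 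The bounds for $f_{xx}$ and $f_{xxx}$ come from iterating the same identity twice and three times.

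The main technical obstacle is the bookkeeping of Taylor-polynomial residues each time the contour is shifted across potential poles of $\hat f$ associated with the subtracted polynomial $\PP_f$: one must verify that every such residue fits into $\NNN{\PP_f}{}$ and that the resulting constants are $\eps$-independent, so that the interpolation in (2) and the shift in (3) are uniform as $\eps\to 0$. This is where the particular choice of Taylor order $n_k=\roundup{3k-\tfrac32}$ in \eqref{norm} is used in an essential way.
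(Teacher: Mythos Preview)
The paper does not give its own proof of this proposition: immediately after stating Lemma~\ref{lem-X-laplace} it writes ``The proof of Proposition~\ref{prp-X}, Lemma~\ref{lem-X-laplace} and Lemma~\ref{lem-real} is given in \cite{KnuepferMasmoudi-2012a}.'' So there is no in-paper argument to compare against; what one can compare to is the Mellin machinery that the paper sets up (Plancherel \eqref{plancherel}, the norm definition \eqref{norm-lambda}, and the interpolation result Lemma~\ref{lem-real}) and attributes to the companion paper. Your approach is precisely along these lines and is essentially correct.

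A few comments. Your argument for (1) is exactly right: $\mu=\min\{|\lambda|,\feps\}$ is pointwise decreasing in $\eps$, hence so is $\mu^k$, and \eqref{X-mon} follows. For (2), the phrase ``Hadamard three-lines principle'' is slightly imprecise, since that theorem concerns $L^\infty$ bounds on analytic functions; what you need is the $L^2$ analogue (complex interpolation of weighted $L^2$ spaces on parallel lines), which is the content of Lemma~\ref{lem-real}. Note also that in order to interpolate against $\XM{f_0}{k}$ you first have to strip off the non-analytic weight $\mu^k$ via the pointwise bound $|\lambda|^{3k}\le C|\lambda|^{3k+1}\mu^k$ on $\Re\lambda=3k-\tfrac12$ (valid since $|\lambda|$ and $\mu$ are both bounded below there, uniformly in $\eps$ by the constraint $\eps<\pi/(3(2k+1))$). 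For (3), your contour-shift argument is sound, but your remark about ``absorption of residues'' is not quite on target: once $\PP_f$ of order $n_k$ has been subtracted, $\hat f_0$ is analytic in the full strip $(-\tfrac12,3k-\tfrac12)$ and there are no poles to absorb; the only bookkeeping is that $(f_x)_0$ and $(f_0)_x$ differ by a cut-off-times-polynomial term (because $n_{k-1/2}<n_k-1$ in general), and that difference is controlled by $\NNN{\PP_f}{}$.
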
 
The norm $\XN{\cdot}{k}$ controls a scale of weighted Sobolev spaces of
fractional order:
\begin{lemma} \label{lem-real} %
  Let $\eps \in [0, 1)$, $k \geq 0$ and $f_0 \in \XSoo{k}$. Then for every $0
  \leq \ell \leq k$, we have
  \begin{align} \label{est-real} %
    \nltL{\lambda^{3\ell + 1} \mu^{\ell} \widehat{f_0}}{\Re \lambda =
      3\ell-\frac 12} %
    + \nltL{\widehat{f_0}}{\Re \lambda = 3\ell-\frac 12} \ %
    \leq \ C \XN{f_0}{\ell}.
  \end{align}
\end{lemma}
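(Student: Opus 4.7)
\textbf{Proof plan for Lemma \ref{lem-real}.}

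The first term on the left-hand side of \eqref{est-real} is, up to the distinction between $\lambda^{3\ell+1}$ and $|\lambda|^{3\ell+1}$ (which leaves the $L^2$ magnitude unchanged), literally the definition \eqref{norm-lambda} of $\XM{f_0}{\ell}$. Since $\XM{f_0}{\ell}\leq \XN{f_0}{\ell}$ by \eqref{norm}, this term is trivially controlled. The entire content of the lemma is therefore the bound on the second term.

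Applying Plancherel's identity \eqref{plancherel} with $\beta = 3\ell - \tfrac12$ rewrites this term as a weighted real-space integral:
\begin{align*}
  \nltL{\hat f_0}{\Re\lambda = 3\ell - 1/2}^2 \ = \ \int_0^\infty x^{-6\ell} f_0^2(x)\,dx.
\end{align*}
Split the integral at $x=1$. On $\{x\geq 1\}$ we have $x^{-6\ell}\leq 1$ (since $\ell\geq 0$), so this piece is bounded by $\nlt{f_0}^2$, which is an ingredient of $\XN{f_0}{\ell}$ in the definition \eqref{norm}. For the piece on $\{x<1\}$, I would iterate Hardy's inequality \eqref{hardy} on the optimal decomposition $f_0 = f_{0,+} + f_{0,-}$ realizing $\XM{f_0}{\ell}$: apply Hardy $4\ell+1$ times to $f_{0,+}$, producing the weight $x^{\ell+1}$ in front of $\partial_x^{4\ell+1} f_{0,+}$; apply Hardy $3\ell+1$ times to $f_{0,-}$, producing the weight $x$ in front of $\partial_x^{3\ell+1} f_{0,-}$, where the missing factor $\eps^{-\ell}$ is harmless because $\eps\leq 1$ forces $\eps^\ell \leq 1$. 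Matching with \eqref{norm-integer} yields
\begin{align*}
  \int_0^1 x^{-6\ell} f_0^2 \, dx \ \leq \ C\, \XM{f_0}{\ell}^2,
\end{align*}
completing the estimate for integer $\ell$.

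For real $\ell\in[0,k]$, I would run essentially the same argument purely in Mellin variables. On the line $\Re\lambda = 3\ell-\tfrac12$, in the regime $|\lambda|\geq 1$ the weight $|\lambda|^{3\ell+1}\mu^\ell$ is bounded below by a positive constant, so the second term is pointwise majorized by the first. In the remaining bounded range of $|\lambda|$, one shifts the contour using the analyticity of $\hat f_0$ in the strip $-\tfrac12<\Re\lambda<3k-\tfrac12$ (available since $f_0\in\XSoo{k}$) to reduce to the endpoint $\Re\lambda=-\tfrac12$, whose $L^2$ norm is exactly $\nlt{f_0}$ by Plancherel. Alternatively, a three-lines-lemma argument interpolating between the endpoints $\ell=0$ and $\ell=k$ gives the fractional result directly.

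The main technical nuisance is the set of exceptional values of $\ell$ at which one step of iterated Hardy would hit the forbidden exponent $\beta = -\tfrac12$ (i.e.\ $\ell\in\{\tfrac16,\tfrac16+\tfrac13,\dots\}$). I would handle these either by density in $\ell$ (passing to the limit from nearby good values and using continuity of the norms under contour shifts) or by performing the above Mellin-space bound, which is insensitive to these exceptional points because the strip of analyticity is open and the loss $|\lambda|^{-(3\ell+1)}\mu^{-\ell}$ is locally integrable on the line away from $\lambda=0$. This issue together with the correct bookkeeping of $\eps^\ell$ for the $f_{0,-}$ piece is the only delicate point in the argument.
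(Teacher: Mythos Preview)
The paper does not prove Lemma~\ref{lem-real} here; immediately after stating Lemma~\ref{lem-X-laplace} it writes ``The proof of Proposition~\ref{prp-X}, Lemma~\ref{lem-X-laplace} and Lemma~\ref{lem-real} is given in \cite{KnuepferMasmoudi-2012a}.''  So there is no in-paper argument to compare against, and I simply assess your sketch.

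Your identification of the first term with $\XM{f_0}{\ell}$ is correct and is indeed the whole content for that piece.  For the second term, your outline works, but it is more elaborate than needed.  On the line $\Re\lambda=3\ell-\tfrac12$ one has $|\lambda|\ge |3\ell-\tfrac12|$, and since $\eps\in[0,1)$ also $\mu\ge\min(|\lambda|,1)$.  Hence for every $\ell\neq\tfrac16$ the weight $|\lambda|^{3\ell+1}\mu^\ell$ is bounded below by a positive constant $c_\ell$ on the entire line, and
\[
\nltL{\widehat{f_0}}{\Re\lambda=3\ell-\tfrac12}\ \le\ c_\ell^{-1}\,\XM{f_0}{\ell}
\]
follows pointwise, without any real-space Hardy iteration.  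Your integer-$\ell$ Hardy argument is valid (the forbidden exponent $-\tfrac12$ is never hit since $3\ell-\tfrac12\notin\Z$ for integer $\ell$), but it is a detour; the Mellin-side bound above subsumes it.  Correspondingly, the family of ``exceptional'' values $\{\tfrac16+\tfrac n3\}$ you flag is an artifact of the Hardy route: in the Mellin argument the only genuinely exceptional value is $\ell=\tfrac16$, where the line passes through the origin.

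At $\ell=\tfrac16$ your idea is right but the phrasing ``shift the contour \dots to reduce to the endpoint $\Re\lambda=-\tfrac12$'' is loose: you are not computing an integral along a contour, you are bounding $\hat f_0$ itself on part of a line.  The clean version is: split the line $\Re\lambda=0$ into $|\lambda|\ge 1$ (handled by the first term as above) and $|\lambda|\le 1$; on the latter compact segment use that $\hat f_0$ is holomorphic in the open strip $-\tfrac12<\Re\lambda<3k-\tfrac12$ and invoke the maximum principle (or the mean-value estimate) on a small rectangle to bound $\sup_{|\lambda|\le 1,\,\Re\lambda=0}|\hat f_0|$ by the $L^2$-data on the edge $\Re\lambda=-\tfrac12$, which is exactly $\nlt{f_0}$ by Plancherel, together with the already controlled large-$|\lambda|$ data.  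Alternatively one may run a three-lines interpolation between $\Re\lambda=-\tfrac12$ and $\Re\lambda=3\ell-\tfrac12$ for nearby $\ell$, as you suggest.  Either way the $\nlt{f_0}$ contribution in $\XN{f_0}{\ell}$ is what absorbs the low-frequency piece, and this is the only place it is needed.
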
 
We have the following characterization of the homogeneous norms:
\begin{lemma} \label{lem-X-laplace} %
  Let $k \in \N_0$, $\eps \in [0, \frac{\pi}{3(2k+1)})$. Then for $f_0 \in
  \XSoo{k}$ we have
  \begin{align} \label{norm-eq2} %
    c \XM{f_0}{k} \leq \ \inf_{f_0 = f_+ + f_-}
    (\nltL{\lambda^{4k+1} \widehat {f_+}}{\Re \lambda = \bet_k} %
    + \nltL{  (\feps)^{k}  \lambda^{3k+1}
      \widehat {f_-}
    }{\Re \lambda = \bet_k} \big) \ %
    \leq C \XM{f_0}{k},
  \end{align}
  where $\beta_k = 3k - \frac 12$. Up to multiplication by a constant that
  only depends on $k$, for any even integer $M$ with $M \geq k$ the
  minimum in the right hand side of \eqref{norm-eq2} is achieved by
  \begin{align} \label{def-fpm} %
    \widehat{f_+}(\lambda) \ = \ (1 -(i\tan)^M(\eps
    \lambda)) \hat f_0(\lambda), && %
    \text{and} && %
    \widehat{f_-}(\lambda) \ = \ (i\tan)^M(\eps \lambda) \hat f_0(\lambda).
  \end{align}
  Furthermore,
    \begin{align} \label{est-fpm} %
      |\lambda|^k |\widehat{f_+}(\lambda)| \ \leq \
      C \mu^k |\hat f(\lambda)|, && ({\textstyle \frac 1{\eps}})^k
      |\widehat{f_-}(\lambda)| \ \leq \ C \mu^k |\hat f(\lambda)|.
    \end{align}
\end{lemma}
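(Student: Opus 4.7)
\medskip

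\textbf{Proof strategy.} The equivalence \eqref{norm-eq2} has two directions. The lower bound --- that every decomposition controls $\XM{f_0}{k}$ --- is a pointwise Fourier-side estimate. The upper bound requires exhibiting an explicit nearly-optimal splitter; the multiplier $(i\tan)^M(\eps\lambda)$ is designed to act as a smooth frequency cut-off at $|\lambda| \sim 1/\eps$, which is precisely the transition scale built into $\mu = \min\{|\lambda|,\tfrac 1\eps\}$. The condition $\eps < \pi/(3(2k+1))$ guarantees $\eps\beta_k < \pi/2$, keeping the line $\Re \lambda = \beta_k$ strictly inside the strip of analyticity of $\tan(\eps\,\cdot)$, so the formulas in \eqref{def-fpm} define tempered Mellin multipliers on this line.

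\medskip

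\textbf{Lower bound.} For any decomposition $f_0 = f_+ + f_-$, the two elementary inequalities $\mu^k \leq |\lambda|^k$ and $\mu^k \leq (1/\eps)^k$ applied separately to each piece give the pointwise bound
\begin{equation*}
|\lambda|^{3k+1}\mu^k |\hat f_0(\lambda)| \ \leq \ |\lambda|^{4k+1}|\hat f_+(\lambda)| + (\tfrac 1\eps)^k |\lambda|^{3k+1}|\hat f_-(\lambda)|
\end{equation*}
on $\Re \lambda = \beta_k$. Taking $L^2$-norms and using the triangle inequality, then the infimum over decompositions, yields the left inequality in \eqref{norm-eq2}.

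\medskip

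\textbf{Upper bound via \eqref{def-fpm}.} The heart of the proof is the pair of pointwise multiplier bounds
\begin{equation}\label{plan-tan}
|\lambda|^{k}\,|1 - (i\tan)^M(\eps\lambda)| + (\tfrac 1\eps)^k |(i\tan)^M(\eps\lambda)| \ \leq \ C\,\mu^k \qquad \text{on } \Re \lambda = \beta_k.
\end{equation}
Writing $\lambda = \beta_k + i\xi$, the explicit identity $|\tan(x + iy)|^2 = (\sin^2 x + \sinh^2 y)/(\cos^2 x + \sinh^2 y)$ with $x = \eps\beta_k$ bounded away from $\pi/2$ yields $|\tan(\eps\lambda)| \leq C\min(\eps|\lambda|, 1)$; hence $|(i\tan)^M(\eps\lambda)| \leq C\min((\eps|\lambda|)^M, 1)$. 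In the regime $\eps|\lambda| \leq 1$ (so $\mu = |\lambda|$), this gives $(\tfrac 1\eps)^k |(i\tan)^M| \leq C\eps^{M-k}|\lambda|^M \leq C|\lambda|^k$ using $M \geq k$; in the regime $\eps|\lambda| > 1$ (so $\mu = \tfrac 1\eps$), it gives $(\tfrac 1\eps)^k |(i\tan)^M| \leq C(\tfrac 1\eps)^k$. For the first term in \eqref{plan-tan}, a direct bound $|1 - (i\tan)^M(\eps\lambda)| \leq C$ suffices when $\eps|\lambda| \leq 1$. For $\eps|\lambda| > 1$, expanding $i\tan(z) = -1 + 2e^{2iz}/(1+e^{2iz})$ as $\Im z \to \pm\infty$ shows that $(i\tan)^M \to 1$ for even $M$ at exponential rate $e^{-2\eps|\xi|}$; since $|\lambda|^k e^{-c\eps|\xi|} \leq C/\eps^k$ (the exponential dominates any polynomial), this closes \eqref{plan-tan}.

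\medskip

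Once \eqref{plan-tan} is established, multiplying by $|\lambda|^{3k+1}|\hat f_0|$ and taking $L^2$-norms on $\Re \lambda = \beta_k$ gives both summands on the right of \eqref{norm-eq2} for the specific decomposition \eqref{def-fpm} bounded by $C\,\XM{f_0}{k}$, proving the right inequality and also the pointwise bounds \eqref{est-fpm}. The main technical point is the transition analysis of $(i\tan)^M(\eps\lambda)$ across $\eps|\lambda| \sim 1$ together with the exponential decay of $1 - (i\tan)^M$ for large $|\Im \lambda|$; the constraint on $\eps$ enters exactly to ensure $\tan(\eps\beta_k)$ is finite and the multiplier is bounded uniformly on the line $\Re \lambda = \beta_k$.
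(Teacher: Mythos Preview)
Your argument is correct. The paper itself does not give a proof of this lemma but refers to \cite{KnuepferMasmoudi-2012a}, so a line-by-line comparison is not possible here; however, your approach is the natural one dictated by the explicit splitter \eqref{def-fpm} already stated in the lemma, and the pointwise multiplier analysis you carry out --- bounding $|\tan(\eps\lambda)|$ via the identity $|\tan(x+iy)|^2=(\sin^2x+\sinh^2y)/(\cos^2x+\sinh^2y)$ in the low-frequency regime and using $(i\tan)^M\to 1$ with exponential rate $e^{-2\eps|\xi|}$ for even $M$ in the high-frequency regime --- is exactly what the structure of $\mu=\min\{|\lambda|,\tfrac1\eps\}$ calls for. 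One small remark: in the transition zone $\eps|\lambda|>1$ but $\eps|\xi|$ bounded, the exponential decay is not yet active, but there $|\lambda|\le C/\eps$ so the crude bound $|\lambda|^k|1-(i\tan)^M|\le C|\lambda|^k\le C(\tfrac1\eps)^k=C\mu^k$ already suffices; you implicitly use this and it is worth making explicit.
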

The proof of Proposition \ref{prp-X}, Lemma \ref{lem-X-laplace} and Lemma
\ref{lem-real} is given in \cite{KnuepferMasmoudi-2012a}.

\medskip

The space $\XS{k}$ is an algebra as is proved below. Note that there is a proof
for the algebra property in \cite{KnuepferMasmoudi-2012a}). The advantage of the
 result below  is that it also applies to the case $k = 1$, also the proof is
simpler than in \cite{KnuepferMasmoudi-2012a}:
\begin{lemma} \label{lem-algebra} %
  For $k \in \N$ with $k \geq 1$ and $f,g \in \XS{k}$, we have $fg \in \XS{k}$
  and
  \begin{align} \label{X-alg} %
    \XN{fg}{k} \ \leq \ C \XN{f}{k} \XN{g}{k}.
  \end{align}
\end{lemma}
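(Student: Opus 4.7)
The plan is to decompose each function into a polynomial boundary piece and an interior remainder, then control the product through Leibniz expansion, using Proposition~\ref{prp-X}, Lemma~\ref{lem-X-laplace}, and Hardy's inequality \eqref{hardy}.

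Following the construction just before \eqref{norm}, write $f=\zeta\PP_f+f_0$ and $g=\zeta\PP_g+g_0$, and expand
\[
fg=\zeta^2\PP_f\PP_g+\zeta\PP_fg_0+\zeta\PP_gf_0+f_0g_0.
\]
The Taylor polynomial $\PP_{fg}$ of $fg$ at $x=0$ of order $n_k$ is the truncation of $\PP_f\PP_g$ at degree $n_k$, so the polynomial part of $\XN{fg}{k}$ is trivially bounded by $C\NNN{\PP_f}{}\NNN{\PP_g}{}$ through finite-dimensional norm equivalence. The leftover $\zeta^2\PP_f\PP_g-\zeta\PP_{fg}$ is a fixed polynomial times a compactly supported cutoff, directly controlled in $\XSoo{k}$. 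The mixed terms $\zeta\PP_fg_0$ and $\zeta\PP_gf_0$ are smooth compactly supported coefficients times an $\XSoo{k}$-function; Leibniz, the supremum bounds of Proposition~\ref{prp-X}(2), and the intermediate weighted-$L^2$ bounds of Lemma~\ref{lem-real} control them.

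The main contribution is $f_0g_0$. I would use the optimal high/low splitting of Lemma~\ref{lem-X-laplace}: writing $f_0=f_++f_-$ and $g_0=g_++g_-$, take $u_+:=f_+g_+$ and $u_-:=f_+g_-+f_-g_++f_-g_-$ as the decomposition inside the infimum defining $\XM{f_0g_0}{k}$ in \eqref{norm-integer}. It then suffices to prove
\[
\nlt{x^{k+1}\partial_x^{4k+1}u_+}+\tfrac{1}{\eps^k}\nlt{x\,\partial_x^{3k+1}u_-}\ \leq\ C\XN{f}{k}\XN{g}{k}.
\]
The factor $\eps^k$ that by \eqref{est-fpm} appears with each occurrence of $f_-$ or $g_-$ cancels the prefactor $(1/\eps)^k$ exactly, so no $\eps$-loss arises in the $u_-$ estimate.

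Both bounds are proved by Leibniz. For $\partial_x^{4k+1}u_+$, each term $\partial_x^jf_+\,\partial_x^{4k+1-j}g_+$ admits an $L^\infty$--$L^2$ split: the integer interval where both $j>3k-1$ and $4k+1-j>3k-1$ is empty whenever $k\geq 1$, since this would force $3k-1<k+2$, i.e. $k<3/2$. Hence one factor always has derivative order at most $3k-1<3k-\tfrac 12$ and is controlled in $L^\infty$ by Proposition~\ref{prp-X}(2); the other factor is bounded in weighted $L^2$ via the defining bound \eqref{norm-integer}, possibly after redistributing weight with Hardy \eqref{hardy}. An identical dichotomy handles $\partial_x^{3k+1}u_-$, where the $\eps^k$ from \eqref{est-fpm} on whichever of $f_-,g_-$ appears is kept adjacent to the $(1/\eps)^k$ prefactor. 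The emptiness of the bad index set is precisely the technical simplification that makes the integer case shorter than the fractional proof of \cite{KnuepferMasmoudi-2012a}.

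The principal obstacle is distributing the weight $x^{k+1}$ (respectively $x$) between the two Hölder factors so that both resulting norms lie in the scale controlled by $\XN{\cdot}{k}$. For Leibniz terms with $j>0$ on the $L^\infty$-factor, one must move $j$ powers of $x$ from the weighted-$L^2$ factor onto the $L^\infty$-factor and appeal to the one-dimensional Sobolev embedding applied to the intermediate bounds $\nlt{x^\ell\partial_x^{3\ell+1}f_0}\leq C\XN{f}{k}$ from Lemma~\ref{lem-real} (combined with Hardy \eqref{hardy}) to control $\nco{x^j\partial_x^jf_+}$. Once each Leibniz term is bounded by $C\XN{f}{k}\XN{g}{k}$, summing over $j$ and combining with the polynomial and mixed estimates yields \eqref{X-alg}.
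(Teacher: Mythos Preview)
Your decomposition and handling of the polynomial and mixed pieces mirror the paper. The substantive difference is in the $f_0g_0$ term: the paper stays in Mellin space throughout, writing the product as a convolution, expanding $\lambda^{3k+1}$ binomially, and closing with the $L^2$--$L^1$ Young bound \eqref{often-1} together with the pointwise estimates \eqref{est-fpm} and Lemma~\ref{lem-real}; the freedom to shift the inner contour $\Re\eta=\alpha$ is what plays the role of your weight redistribution. Your real-space Leibniz route on the pieces $f_\pm,g_\pm$ is a genuine alternative and would be more elementary if it went through.

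There is, however, a gap. You invoke Proposition~\ref{prp-X}(2) and the intermediate weighted-$L^2$ bounds of Lemma~\ref{lem-real} on $f_+$ and $g_+$, but those results are stated for $f\in\XS{k}$ (or $f_0\in\XSoo{k}$), not for the Mellin-defined pieces $f_\pm$ from \eqref{def-fpm}. To transfer them you must first check that the multipliers $1-(i\tan)^M(\eps\lambda)$ and $(i\tan)^M(\eps\lambda)$ are bounded on every line $\Re\lambda=3\ell-\tfrac12$, $0\le\ell\le k$ (this is precisely where the hypothesis $\eps<\tfrac{\pi}{3(2k+1)}$ enters, keeping $\eps\Re\lambda$ away from the poles of $\tan$); only then do $f_\pm$ inherit the full $\XSoo{k}$ scale you need. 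A second, related gap is the weight redistribution: for each Leibniz term you must exhibit a split $x^{k+1}=x^a x^b$ with both $\nco{x^a\partial_x^jf_+}$ and $\nlt{x^b\partial_x^{4k+1-j}g_+}$ controlled, and your appeal to Hardy alone does not pin this down. The paper's Mellin argument sidesteps both issues: \eqref{est-fpm} gives pointwise control of $\widehat{f_\pm}$ directly, and analyticity of the convolution integrand in $\eta$ replaces the weight bookkeeping.
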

\begin{proof}
  We decompose $f = f_1 + f_0$ and $g = g_1 + g_0$ where $f_1 = \PP_f \zeta$,
  $g_1 = \PP_g \zeta$ and where $\PP_f$, $\PP_g$ are the Taylor polynomials of
  $f$, $g$ of order $3k-1$. The cut-off function $\zeta$ is defined as for the
  definition of \eqref{norm}. In particular, $f_0, g_0 \in \XS{k}$. We need to
  estimate the products $f_1 g_1$, $f_0 g_1$, $f_1 g_0$ and $f_0 g_0$. Clearly
  $\XN{f_1 g_1}{k} \leq C \XN{f_1}{k} \XN{g_1}{k}$.  As in \eqref{def-fpm}, we
  decompose $f_0 = f_+ + f_-$ and $g_0 = g_+ + g_-$. The mixed term $f_1 g_0$
  is estimated as follows
   \begin{align*}
     \hspace{3ex} & \hspace{-3ex} %
     \XM{f_1 g_0}{k} \ %
     \leq \ C \nltL{x^{k+1}\partial^{4k} (f_1 g_+)}{(0,\infty)} +
     \nltL{{\textstyle (\frac 1\eps)}^k x \partial^{3k}(f_1
       g_-)}{(0,\infty)} \\
     &\leq \ C \XN{f}{k} \big( \nlt{x^{k+1}\partial^{4k} g_+} +
     \nlt{x \partial^{3k} g_-} \big) \ \leq \ C \XN{f}{k} \XN{g}{k},
   \end{align*}
   The second estimate follows since all derivatives of $f_1$ are supported in
   $(\frac 18,\frac 14)$ and since $x$ is of order $1$ in this interval. The
   estimate of $f_0 g_1$ proceeds analogously.

   \medskip

   It remains to show the estimate for the product $f_0 g_0 = f_+ g_+ + f_+ g_-
   + f_- g_+ + f_-g_-$. In particular, since $f_0, g_0 \in \XSoo{k}$, we have
   $|f_0|, |g_0| \leq Cx^{3k-1}$ for small $x$. It follows that $|f_0 g_0| \leq
   Cx^{6k-2} \leq Cx^{3k-2}$ (since $k \geq 1$), in particular $f_0g_0 \in
   \XSoo{k}$.  We show the estimate for the term high-high-frequency product
   $f_- g_-$.  Recall that the Mellin transform for the product of functions can
   be expressed as a convolution for the Mellin transformed functions. Hence,
   \begin{align} \label{ident} %
     \XM{f_- g_-}{k} \ &\lupref{norm-lambda}= \ \nltL{\lambda^{3k+1} \mu^k
       \int_{\Re \eta = \alpha} \hat f_-(\lambda-\eta) \hat g_-(\eta)
       \DI{\eta}}{\Re \lambda = 3k-\frac 12},
  \end{align} 
  where $\mu = \min \{ {\textstyle \frac 1\eps}, |\lambda| \}$ and where $\alpha
  \in \R$ is chosen such that the product $\hat f_-(\lam-\cdot) \hat g_-(\cdot)$
  is absolutely integrable on the line $\Re \eta = \alpha$.  Since $\mu \leq
  \feps$, the right-hand side of \eqref{ident} can be estimated  by
  replacing $\mu^k$ by $(\feps)^k$. Using the binomial formula $\lambda^{3k+1} =
  \sum_i c_{ki} \eta^i (\lambda-\eta)^{3k+1-i}$, \eqref{ident} is bounded by
  above by the sum of the terms
  \begin{gather*} %
    I_{k,i}(f_-,g_-) = \int_{\Re \lambda = 3k-\frac 12} \left| \int_{\Re \eta =
        \alpha} (\feps)^k (\lambda-\eta)^{3k+1-i} \widehat{f_-}(\lambda-\eta)
      \eta^{i} \widehat{g_-}(\eta) \DI{\eta} \right|^2
    \hspace{-1ex}\DI{\lambda},
   \end{gather*}
   where $0 \leq i \leq 3k+1$.  By symmetry, it is enough to estimate the terms
   with $i \leq \frac {3}2 k+\frac 12$. Note that the integrand of the inner
   integral above is analytic as a function of $\eta$. In particular, the value
   of the integral does not depend on $\alpha$.  This argument works since we
   have avoided to replace $\lam$ by $|\lam|$ in our proof.  By Young's
   inequality for convolutions and by the Cauchy-Schwarz inequality, we have
   \begin{align} \label{often-1} %
     \nltL{\hat F * \hat G}{\Re \lam = \bet} %
     \ &\leq \ C_\delta \nltL{(1+|\lam|^{\delta})\hat F}{\Re \lam = \bet_1}
     \nltL{\hat G}{\Re \lam = \bet_2},
     \end{align}
     which holds for all $\delta > \frac 12$.  With the choice $\alpha = i$ and
     $\delta = 1$, we get
   \begin{align}
     I_{k,i}(f_-,g_-) \ %
     &\upref{often-1}\leq \ C \nltL{(\feps)^{k-\frac i3} |\lambda|^{3k+1-i}
       \widehat{f_-}}{\Re \lambda = 3k-i-\frac 12} \nltL{(\feps)^{\frac i3}
       (1+|\lambda|^{i+1})
       \widehat{g_-}}{\Re \lambda = i} \notag \\
     &\upref{est-fpm}\leq \ C \nltL{\mu^{k-\frac i3} |\lambda|^{3k-i+1}
       \widehat{f_0}}{\Re \lambda = 3k-i-\frac 12} \nltL{\mu^{\frac i3} (1 +
       |\lam|^{i+1}) \widehat{g_0}}{\Re \lambda = i} \notag \\ %
     &\upref{est-real}\leq \ C \XN{f}{k} \XN{g}{k}. \notag
   \end{align}
   The last estimate follows from \eqref{est-real} using $0 \leq i \leq \frac 32
   k+\frac 12$ (and since $k \geq 1$). The estimate of the terms $f_+ g_-$, $f_-
   g_+$ and $f_+ g_+$ proceeds analogously (see also the related proof in
   \cite{KnuepferMasmoudi-2012a}). This concludes the proof of the Lemma.
 \end{proof}
 Also the space $\YS{k}$ is embedded into classical Sobolev spaces: For any
 multi-index $\alp \in \R_{\geq 0} \times \N_0$, we set $|\alp| = \alp_1 +
 \alp_2$. For $\alpha \in \N^2$ we also use the notation $D_\eps^\alp
 = \partial_x^{\alp_1} (\frac 1\eps \partial_y)^{\alp_2}$.
 \begin{lemma} \label{lem-Y} %
   Let $k \in \R$ with $k \geq 1$, $\eps \in (0, 1)$ and let $q \in \YS{k}$. Then
   for all $\ell_1$, $\ell_2 \in \N_0$ with $\ell_1 < 3k-1$ 
  and $\ell_2 \leq
   3k-\frac 32$ (or equivalently $\ell_2 \leq n_k - 1$), we have
   \begin{gather} \label{Y-sob} %
     \sum_{0 \leq |\alp| \leq \ell_1} \NNN{D_\eps^\alp q}{L_y^\infty L_x^2(K)} %
     + \sum_{0 \leq |\alp| \leq \ell_2} \sup_{y \in (0,x)} \NNN{D_\eps^\alp
       q}{L^\infty(K)} \ %
     \leq \ C \YN{q}{k},
  \end{gather}
  where the sums are taken over multiindices $\alp \in \Z^2$.
\end{lemma}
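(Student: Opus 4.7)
The plan is as follows. I first decompose $q = \zeta\PP_q + q_0$ with $q_0 \in \YSoo{k}$ as in the definition of the norm $\YN{\cdot}{k}$ (see \eqref{Y-norm}). Since the cut-off $\zeta$ is supported in $\{r \leq 1/4\}$, the first summand is smooth and compactly supported, so all its $\eps$-scaled derivatives of any order are uniformly bounded by $C\NNN{\PP_q}{\PP}$ both in $L^\infty(K)$ and in $L^\infty_y L^2_x(K)$. It therefore remains to estimate $q_0$.

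Passing to the strip coordinates \eqref{def-ts} and writing $Q_0(u,v) = q_0(x,y)$, the commutation relations \eqref{eq-ts} yield, by induction on $|\alpha|$,
\begin{equation*}
  D_\eps^\alpha q_0(x,y) \;=\; e^{-|\alpha|u}\sum_{j_1+j_2\leq|\alpha|} c_{\alpha,j_1,j_2}(\eps v)\, \partial_u^{j_1}\bigl(\tfrac 1\eps\partial_v\bigr)^{j_2} Q_0(u,v),
\end{equation*}
where the coefficients $c_{\alpha,j_1,j_2}$ are polynomials in $\cos(\eps v), \sin(\eps v)$ (the $\frac 1\eps$ factor in $\frac 1\eps\partial_v$ exactly compensates the $\eps$ that arises when $\partial_v$ falls on such trigonometric factors), and are therefore uniformly bounded. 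For any admissible height $\beta_\ell = 3\ell - \frac 32$ with $\ell \in [\frac 23, k]$, Mellin inversion in $u$ gives
\begin{equation*}
  \partial_u^{j_1}\bigl(\tfrac 1\eps\partial_v\bigr)^{j_2} Q_0(u,v) \;=\; \int_{\Re\lambda=\beta_\ell} e^{\lambda u}\, \Lambda^{(j_1,j_2)}\widehat{q_0}(\lambda,v) \DI{\lambda}.
\end{equation*}
For the pointwise bound I apply Cauchy--Schwarz in $\Im\lambda$ with the weight $\mu^\ell$:
\begin{equation*}
  \int_{\Re\lambda=\beta_\ell} |\Lambda^{(j_1,j_2)}\widehat{q_0}|\DI{\lambda} \;\leq\; \Bigl(\int \mu^{-2\ell}\DI{\lambda}\Bigr)^{\!1/2} \Bigl(\int \mu^{2\ell}|\Lambda^{(j_1,j_2)}\widehat{q_0}|^2\DI{\lambda}\Bigr)^{\!1/2}.
\end{equation*}
Since $\ell \geq \frac 23 > \frac 12$, the first factor is finite on the low-frequency set $\{|\lambda|\leq\tfrac 1\eps\}$ (where $\mu=|\lambda|$), while on the high-frequency set the exponential weight $e^{-\frac 12\eps|\lambda|(1-v)}$ absorbed from $\YM{q_0}{\ell}$ provides the needed decay in the second factor. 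The supremum over $v\in(0,1)$ commutes through the outer $L^2_\lambda$-norm exactly as in \eqref{Y-hom}. Choosing $\ell \in [\frac 23, k]$ so that $\beta_\ell - |\alpha|$ lies in a bounded range keeps $e^{(\beta_\ell-|\alpha|)u}$ uniformly bounded on $K$; such an $\ell$ exists precisely when $|\alpha| \leq 3k-\frac 32 = \ell_2$.

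For the $L^\infty_y L^2_x$-bound I combine the same expansion with Plancherel's identity \eqref{plancherel} in the Mellin variable $u$: at height $\beta_\ell = |\alpha|-1$ the prefactor $e^{(2-2|\alpha|)u}$ arising from $dxdy = e^{2u}\,du\,dv$ matches exactly the Plancherel weight, and the resulting $L^2(\Re\lambda=\beta_\ell)$-norm with sup-in-$v$ is controlled by $\YM{q_0}{\ell}$; a one-dimensional Sobolev embedding $L^\infty_y \hookrightarrow H^{1/2}_y$ applied to $y \mapsto \NNN{D_\eps^\alpha q_0(\cdot,y)}{L^2_x}$ then handles the passage from $L^2(K)$ to $L^\infty_y L^2_x(K)$, at the cost of one extra $\frac 1\eps\partial_y$-derivative that can be afforded because the condition $\ell_1 < 3k-1$ leaves room in the scale of homogeneous norms. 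The principal obstacle in executing this plan is the presence of the supremum over $v$ inside the $L^2_\lambda$-norm in $\YM{q_0}{\ell}$: as the authors emphasize after the definition of $\YN{\cdot}{k}$, one cannot recover intermediate values of $\ell$ from the endpoints $\ell=\frac 23$ and $\ell=k$ by complex interpolation, so $\ell$ must be chosen depending on the physical scale $r=e^u \simeq x$, and this choice has to be tuned in tandem with the low/high-frequency split $\mu=\min(|\lambda|,\tfrac 1\eps)$ in order to produce bounds that are uniform in $\eps\in(0,1)$.
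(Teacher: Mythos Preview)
Your overall architecture coincides with the paper's: split $q=\zeta\PP_q+q_0$, handle the polynomial piece trivially, pass to strip coordinates \eqref{def-ts} for $q_0$, and exploit Plancherel in the $u$-variable together with the sup-in-$v$ built into $\YM{\cdot}{\ell}$. The paper's own proof is in fact much terser than yours: it obtains the $L^\infty_y L^2_x$ bound directly by Plancherel (matching the height $\Re\lambda=3\ell-\tfrac32$ to the Jacobian weight $e^{(1-2|\alpha|)u}$) and then simply says the $L^\infty$ bound ``follows by standard interpolation'' between the $L^2$ estimates at consecutive orders. So your route through Cauchy--Schwarz on the inverse Mellin integral is a legitimate, more explicit alternative.

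There are, however, two points where your write-up needs correction. First, the Cauchy--Schwarz step with weight $\mu^\ell$ alone does not close on the high-frequency set $\{|\lambda|>\tfrac1\eps\}$: there $\mu=\tfrac1\eps$ is constant, so $\int\mu^{-2\ell}$ diverges, and the exponential $e^{-\frac12\eps|\lambda|(1-v)}$ you invoke degenerates to $1$ at $v=1$, hence cannot rescue the bound uniformly in $v$. The fix is straightforward: since the norm $\YM{q_0}{\ell}$ carries $3\ell$ derivatives while you only need $|j|\le|\alpha|=3\ell-\tfrac32$, you should weight Cauchy--Schwarz by $\mu^\ell|\lambda|^{3\ell-|j|}$; the surplus factor $|\lambda|^{-3/2}$ then makes the first integral converge on high frequencies without any appeal to the exponential.

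Second, your closing paragraph misreads the role of the intermediate $\ell$. The authors include \emph{all} $\ell\in[\tfrac23,k]$ in the definition of $\YN{\cdot}{k}$ precisely because interpolation between the endpoints is unavailable; the consequence is not that one must vary $\ell$ with the physical scale $r=e^u$, but rather that for each fixed $|\alpha|$ one simply picks the single value $\ell=\tfrac{|\alpha|+3/2}{3}$ (for $L^\infty$) or $\ell=\tfrac{|\alpha|+1}{3}$ (for $L^2$) that lands Plancherel on the correct line. No scale-dependent tuning is required, and this is why the paper's argument is so short.
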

\begin{proof}[Proof of Lemma \ref{lem-Y}]
  Let $q=q_0 + q_1$ with $q_0 \in \YSoo{k}$ be the decomposition as in
  \eqref{Y-hom}. By definition, $q_1$ solves \eqref{Y-sob}. It   suffices to
  show the estimate \eqref{Y-sob} for $q_0 \in \YSoo{k}$. By application of
  Plancherel's identity, we have
  \begin{align*}
    \sum_{|\alp| = 3\ell-1} \sup_{y \in(0,x)} \nltL{D_\eps^\alp q_0}{(0,\infty)}
    \ %
    &\leq \ C \sum_{|\alp| = 3\ell-1} \sup_{v \in (0,1)}
    \nltL{\partial_u^{\alp_1} {(\textstyle \frac 1\eps \partial_v)}^{\alp_2}
      q_0}{(0,\infty)} \\ %
    &\upref{Y-hom}\leq \ C \sum_{|\alp| = 3\ell-1} \sup_{v \in (0,1)}
    \nltL{\Lam^\alp \widehat{q_0}}{\Re \lambda = 3\ell-\frac 32} \ %
    \leq \ C \YM{q_0}{k}. %
  \end{align*}
  This yields the $L^2$ estimate, the supremum estimate follows by standard
  interpolation.
\end{proof}

For any $f \in \PXS{k+1}$, its trace at $t=0$ is well-defined in $\XS{k+1/2}$:
 \begin{lemma} \label{lem-time-trace} %
   Let $k,\gamma \in \N_0$. For $f \in \PXS{k+1}$ we have
   \begin{gather} \label{est-time-trace} %
     \|\partial_t^i f\|_{C^0(\XS{k+1/2})} \ %
     \leq \ C \|\partial_t^{i+1} f\|_{L^2(\XS{k})} + C \|\partial_t^i
     f\|_{L^2(\XS{k+1})}.
   \end{gather}
\end{lemma}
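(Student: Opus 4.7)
The plan is to deduce \eqref{est-time-trace} from a one-dimensional trace inequality in $t$ applied pointwise in the Mellin variable, combined with an elementary bookkeeping step for the finite-dimensional Taylor part. By a density argument I may assume $f$ is smooth and set $g:=\partial_t^i f$; the hypotheses then provide $g\in L^2(\R_+;\XS{k+1})$ and $\partial_t g\in L^2(\R_+;\XS{k})$, and the goal becomes
$$\sup_{t\ge 0}\XN{g(\cdot,t)}{k+1/2}\ \leq\ C\big(\NNN{g}{L^2(\XS{k+1})}+\NNN{\partial_t g}{L^2(\XS{k})}\big).$$
First I would split $g(\cdot,t)=\PP_{g(t)}+g_0(\cdot,t)$ as in \eqref{norm}. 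The polynomial part has finitely many time-dependent coefficients, and the hypotheses supply $L^2_t$ control on each coefficient and its $t$-derivative, so the scalar embedding $H^1(\R_+)\hookrightarrow C^0(\R_+)$ handles that part directly.

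For $g_0\in\XSoo{k+1}$ I would pass to Mellin variables and apply the high/low frequency decomposition of Lemma \ref{lem-X-laplace}: by \eqref{def-fpm}--\eqref{est-fpm} the splitting $g_0=g_++g_-$ commutes with $\partial_t$ and converts the non-holomorphic weight $\mu^\ell$ into a pure power on each piece. Writing $\beta_\ell:=3\ell-\tfrac12$, the homogeneous norm $\XM{g_0}{\ell}$ becomes equivalent to
$$\nltL{\lambda^{4\ell+1}\widehat{g_+}}{\Re\lambda=\beta_\ell}+\nltL{(\tfrac1\eps)^{\ell}\lambda^{3\ell+1}\widehat{g_-}}{\Re\lambda=\beta_\ell}$$
for $\ell\in\{k,k+\tfrac12,k+1\}$, with exponents linear in $\ell$.

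The key one-dimensional ingredient is, for any complex-valued $\phi\in H^1(\R_+)$ (proved by the fundamental theorem of calculus, averaging the starting point, and letting the averaging interval tend to infinity),
$$\sup_{t\ge 0}|\phi(t)|^2\ \leq\ 2\NNN{\phi}{L^2(\R_+)}\NNN{\phi_t}{L^2(\R_+)}.$$
Applied with $\phi(t)=\widehat{g_\pm}(\lambda,t)$ for each $\lambda$ on the intermediate line $\Re\lambda=\beta_{k+1/2}$, multiplied by the appropriate power of $|\lambda|$ (respectively of $\tfrac1\eps$), integrated in $\lambda$, and combined with Cauchy--Schwarz, this distributes the target exponent $4(k+\tfrac12)+1$ (respectively $3(k+\tfrac12)+1$) as the midpoint between $4k+1$ and $4(k+1)+1$ (respectively $3k+1$ and $3(k+1)+1$), exactly matching the norms of $\partial_t g_\pm$ and $g_\pm$ on the lines $\beta_k$ and $\beta_{k+1}$.

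The main technical hurdle is that the natural $L^2_t$-bounds on $\widehat{\partial_t g_\pm}$ and on $\widehat{g_\pm}$ live on different Mellin lines ($\beta_k$ and $\beta_{k+1}$), whereas the target supremum sits on the intermediate line $\beta_{k+1/2}$. I would bridge this by a contour shift in the strip $\beta_k\le\Re\lambda\le\beta_{k+1}$, using the analyticity of $\widehat{g_\pm}(\cdot,t)$ in this strip together with sufficient decay as $|\Im\lambda|\to\infty$; both properties are furnished by Lemma \ref{lem-real}. The decomposition \eqref{def-fpm} is essential precisely here: the min-weight $\mu^\ell$ is not holomorphic in $\lambda$, but after splitting, each of $\widehat{g_\pm}$ carries only a pure power weight, for which the contour shift is standard and produces constants uniform in $\eps$.
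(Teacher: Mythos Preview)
Your treatment of the polynomial part has a genuine gap. You split $g(\cdot,t)=\PP_{g(t)}+g_0(\cdot,t)$ with $\PP_{g(t)}$ the Taylor polynomial of degree $n_{k+1/2}=3k$, and then claim that the hypotheses give $L^2_t$ control on each coefficient \emph{and its $t$-derivative}. This is false for the top coefficient $a_{3k}(t)=\tfrac{1}{(3k)!}\partial_x^{3k}g(0,t)$. Indeed, $\partial_t g\in L^2(\XS{k})$, but the $\XS{k}$-norm controls only the Taylor polynomial of order $n_k=3k-1$; by Proposition~\ref{prp-X}(2), $\XN{\cdot}{k}$ bounds $\partial_x^{3k}(\cdot)$ in $L^2_x$ but \emph{not} in $L^\infty_x$, so the pointwise trace $\partial_x^{3k}(\partial_t g)(0,t)$ is not a bounded functional. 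Thus $\partial_t a_{3k}\notin L^2_t$ from the data, and the scalar embedding $H^1(\R_+)\hookrightarrow C^0(\R_+)$ does not apply to $a_{3k}$.

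The paper resolves this by an anisotropic space--time embedding rather than a pure time embedding: setting $F=\partial_x^{3k}g$ (evenly extended in $x$), it proves
\[
\sup_{t}|F(t,0)|\ \leq\ C\big(\nltL{F_t}{\R^2}+\nltL{F_{xxx}}{\R^2}+\nltL{F}{\R^2}\big)
\]
via Fourier transform with the weight $(1+|\eta|^2+|\xi|^6)^{-1}\in L^1(\R^2)$. The point is that $\nltL{F_t}{\R^2}=\nltL{\partial_x^{3k}\partial_t g}{L^2_{t,x}}$ \emph{is} controlled by $\NNN{\partial_t g}{L^2(\XS{k})}$ (Proposition~\ref{prp-X}(2) with $\ell_1=3k$), while the missing time regularity at $x=0$ is compensated by three extra spatial derivatives, $\nltL{F_{xxx}}{\R^2}=\nltL{\partial_x^{3k+3}g}{L^2_{t,x}}\leq C\NNN{g}{L^2(\XS{k+1})}$. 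Your argument does not contain this trade-off, and it is precisely the new idea needed for the top coefficient.

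For the homogeneous part your route is genuinely different from the paper's: the paper uses the equivalence $\XM{f_0}{k+1/2}^2\sim\XP{A_\eps f_0}{f_0}{k}$ and differentiates this bilinear form in $t$, whereas you work in Mellin variables with the high/low split and a pointwise trace inequality. Your approach is in principle viable, but note two points that need care. First, the split $g=\PP_g+g_0$ at degree $3k$ puts $g_0\in\XSoo{k+1/2}$ but \emph{not} in $\XSoo{k+1}$ (which requires vanishing to order $3k+2$), so the characterisation of $\XM{g_0}{k+1}$ via Lemma~\ref{lem-X-laplace} on the line $\beta_{k+1}$ is not directly available; you must either subtract the larger polynomial or argue via Lemma~\ref{lem-real}. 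Second, the ``contour shift'' you invoke relates $L^2$-norms of $\widehat{g_\pm}$ on three different vertical lines, and this is an interpolation statement rather than a literal contour deformation; making it precise requires the three-lines argument behind Lemma~\ref{lem-real}. The paper's $A_\eps$-inner-product route sidesteps both issues at once.
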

\begin{proof}
  It suffices to give the proof for $i=0$ (for $i > 0$ consider $F
  = \partial_t^i f$ instead). By an approximation argument it is enough to
  consider $f \in \cciL{[0,\infty)^2}$ .  We decompose $f = f_0 + f_1$ where
  $f_1 = \PP_f \zeta$ and where $\PP_f$ is the Taylor polynomial  of order
  $n_{k+1/2} = 3k$ of $f$ at $x=0$; in particular, $f_0 \in \XSoo{k+1/2}$. In
  order to avoid fractional derivatives which appear in the definition of the
  norm for $\XS{k+1/2}$, we use the equivalence $c\XM{f_0}{k+1/2} \leq
  \XP{Af_0}{f_0}{k} \leq C\XM{f_0}{k+1/2}$ which holds for all $f_0 \in
  \XSoo{k+1/2}$. A proof of this equivalence is given for $k=0$ in \cite[Lemma
  4.6]{KnuepferMasmoudi-2012a}, the argument used there also applies for general
  $k \in \N$. The estimate of the homogeneous part is then easy: For $f, g \in
  \XSoo{k+1/2} \cap \XS{k+1}$, we have $\XP{A_\eps f_0}{g_0}{k} =
  \XP{f_0}{A_\eps g_0}{k}$. Hence, integrating in time from infinity (where $f =
  0$), we obtain
  \begin{align*}
    \sup_t \XM{f_0}{k+1/2}^2 \ %
    &\leq \ C \sup_t \XP{A_\eps f_0}{f_0}{k} \ %
    \leq \ C \int_0^\infty \Big| \XP{A_\eps f_0}{f_{0t}}{k} \Big| \ dt \\ %
    &\leq \ C \TXM{A_\eps f_0}{k} \TXM{f_{0t}}{k} \
    \leq \ C \TXM{f_0}{k+1} \TXM{f_{0t}}{k}. %
  \end{align*}
  It remains to give the corresponding estimate for $f_1$: That is, we
  need to estimate the coefficients of the Taylor polynomial
  $\PP_f$. We show the estimate for the highest order coefficient of
  $\PP_f$: Up to a constant it is given by $F(0)$ where $F
  := \partial^{3k} f$. We extend $F$ symmetrically as an even  function
  defined for all $t,x \in \R$ by setting $F(t,-x) := F(t,x)$.  We
  claim that
    \begin{align} \label{tc} %
      \sup_{t \in \R} |F(0)| \ \leq \ C \big( \nltL{F_t}{\R^2} + \nltL{F_{xxx}}{\R^2} + \nltL{F}{\R^2} \big).
    \end{align}
    Indeed, \eqref{tc} can be derived by taking the Fourier transform $\hat
    F(\eta,\xi)$ of $F(t,x)$
    \begin{align*}
      \sup_{t \in \R} |F(0)| \ \leq \ \iint |\hat F| \ %
      \leq \ \left(\iint \frac 1{(1 + |\eta|^2 + |\xi|^6)} \right)^{1/2}
      \left(\iint (1 + |\eta|^2 + |\xi|^6) |\hat F|^2 \right)^{1/2}.
    \end{align*}
    The first integral on the right hand side is bounded: One the one hand we
    have
    \begin{align}
      \iint_{|\xi| \leq 1 \ \text{or} \ |\eta| \leq 1} \frac 1{1 + |\eta|^2 +
        |\xi|^6} \ d\xi d\eta \ %
      \leq \ C \ < \ \infty.
    \end{align}
    Also, with the coordinate transform $\xi^6 = \eta^2 \lam^6$ and $d\xi =
    \eta^{1/3} d\lam$
    \begin{align}
      \int_1^\infty \int_1^\infty \frac 1{|\eta|^2 + |\xi|^6} \ d\xi d\eta \ %
      \leq \ C \int_1^\infty \eta^{-5/3} \int_0^\infty \frac 1{1 + |\lam|^6} \
      d\lam d\eta \ \leq C \ < \ \infty.
    \end{align}
    This concludes the proof of \eqref{tc} and thus of the lemma.
\end{proof}

\section{Uniform estimates for the operator in the half-space} \label{sec-wedge} %

\subsection{Linear pressure estimates}

We derive estimates for the pressure $p \in \YS{k}$
\begin{proposition} \label{prp-p-linear} %
  Let $k \in \R$ with $k \geq 0$, $\eps \in (0, \frac \pi{3(2k+1)})$. Then for any
  $f \in \XS{k}$ and any $g \in \ZS{k}$ there is a unique solution $p \in
  \YS{k}$ of
  \begin{gather} \label{p-wedge}
     \left \{
    \begin{array}{ll}
      \Delta_\eps p \ = \ g  \qquad&\text{ in } K, \\
      p \ = \ f_x  \qquad&\text{ on } \partial_1 K, \\
      p_y \ = \ 0  \qquad&\text{ on } \partial_0 K.
    \end{array}
    \right.
  \end{gather}
  Furthermore, we have
  \begin{align} \label{est-p} %
    \YN{p}{k} \ \leq \ C \big(\ZN{g}{k} + \XN{f}{k} \big).
  \end{align}
\end{proposition}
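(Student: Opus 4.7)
\medskip

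\textbf{Proof plan for Proposition \ref{prp-p-linear}.} The plan is to solve \eqref{p-wedge} explicitly via the coordinate change $(x,y) \to (u,v)$ from \eqref{def-ts} followed by a Mellin transform in $u$, then to read off the estimate \eqref{est-p} directly from the representation formula. First I would decompose $f = f_0 + f_1$ and $g = g_0 + g_1$ into their ``polynomial plus remainder'' parts as in the definitions of $\XS{k}$ and $\ZS{k}$, and reduce to showing the estimates for the two homogeneous problems: (a) $\Delta_\eps p^{(1)} = 0$ in $K$ with $p^{(1)} = f_{0,x}$ on $\partial_1 K$ and $p^{(1)}_y=0$ on $\partial_0 K$; (b) $\Delta_\eps p^{(2)} = g_0$ in $K$ with $p^{(2)} = 0$ on $\partial_1 K$ and $p^{(2)}_y=0$ on $\partial_0 K$. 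The contribution of the polynomial parts is handled by explicit construction: for each Taylor monomial one writes down a polynomial pressure satisfying the equation modulo lower-order remainders that can be absorbed.

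\medskip

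In the strip coordinates, $\Delta_\eps$ becomes $e^{-2u}(\partial_u^2 + (\tfrac{1}{\eps})^2\partial_v^2)$, so after taking the Laplace transform in $u$ the problems (a) and (b) become the ODE in $v$
\begin{equation*}
  \hat{q}_{vv}(\lambda,v) + (\eps\lambda)^2 \hat{q}(\lambda,v) \ = \ \eps^2 \widehat{e^{2u} g_0}(\lambda,v),
  \qquad \hat{q}(\lambda,1) = (\lambda+1)\widehat{f_0}(\lambda+1), \qquad \hat{q}_v(\lambda,0) = 0,
\end{equation*}
on $(0,1)$, which is solved explicitly by
\begin{equation*}
  \hat{q}(\lambda,v) \ = \ \frac{\cos(\eps\lambda v)}{\cos(\eps\lambda)}\,(\lambda+1)\widehat{f_0}(\lambda+1) \ + \ \int_0^1 G_\eps(\lambda;v,w)\,\eps^2\, \widehat{e^{2u}g_0}(\lambda,w)\,dw,
\end{equation*}
where $G_\eps$ is the explicit Green function built from $\cos(\eps\lambda v)$ and $\cos(\eps\lambda(1-v))$. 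The assumption $\eps < \pi/(3(2k+1))$ is precisely what is needed to keep $\Re(\eps\lambda) \in (-\pi/2,\pi/2)$ and away from $\pm\pi/2$ on the relevant contour $\Re\lambda = 3\ell - \tfrac{3}{2}$ for all $\ell \in [\tfrac{2}{3},k]$, so that $\cos(\eps\lambda)$ is bounded below.

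\medskip

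The heart of the proof is then the pointwise symbol bound
\begin{equation*}
  \sup_{v\in(0,1)} \Bigl|e^{\frac{1}{2}\eps|\lambda|(1-v)}\,\Lambda^{\alpha}\mu^{\ell}\,\tfrac{\cos(\eps\lambda v)}{\cos(\eps\lambda)}\Bigr| \ \leq \ C\,|\lambda+1|^{|\alpha|}\mu^{\ell},
\end{equation*}
obtained by splitting into the regimes $|\Im\lambda|\geq \tfrac{1}{2}|\lambda|$ (where $|\cos(\eps\lambda)|\gtrsim e^{\eps|\Im\lambda|}$ compensates the numerator, leaving a factor that the weight absorbs), and $|\Im\lambda|<\tfrac{1}{2}|\lambda|$ (where all quantities are comparable and one simply applies the product rule in $v$). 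Combining this with a shift-of-contour from $\Re\lambda = 3\ell-\tfrac{3}{2}$ to $\Re(\lambda+1) = 3\ell - \tfrac{1}{2}$ (justified by analyticity in the strip) gives $\YM{p^{(1)}}{\ell} \leq C\,\XM{f}{\ell}$ for every $\ell\in[\tfrac{2}{3},k]$, and summing on $\ell$ yields the $f$-part of \eqref{est-p}.

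\medskip

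For $p^{(2)}$ I would use the analogous pointwise bound on the Green function $G_\eps$, noting that the relation $|\alpha|_{\YS{k}} = 3\ell$ versus $|\alpha|_{\ZS{k}} = 3\ell-2$ matches exactly the two orders of regularity gained by inverting the second-order operator $\partial_v^2 + (\eps\lambda)^2$ in $v$ (the $\eps^2$ up front combines with $(\tfrac{1}{\eps}\partial_v)^2$ to balance dimensions). Uniqueness is obtained by the standard Lax-Milgram argument applied to the bilinear form associated with $-\Delta_\eps$ in $H^1_\eps(K)$ with Dirichlet data on $\partial_1 K$, and uniqueness within $\YS{k}$ then follows from the fact that $\YS{k}\subset H^1_{\mathrm{loc}}$ by Lemma \ref{lem-Y}. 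The main obstacle I anticipate is the uniform-in-$\eps$ treatment of the supremum in $v$: because the weight $e^{\frac{1}{2}\eps|\lambda|(1-v)}$ is present one cannot simply apply Plancherel in $v$, and one must carry the supremum throughout the convolution/Green's-function estimate, which is exactly why intermediate values of $\ell\in[\tfrac{2}{3},k]$ are kept in the definition \eqref{Y-norm} rather than being recovered by interpolation.
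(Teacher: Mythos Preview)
Your proposal is correct and follows essentially the same route as the paper: decompose $f,g$ into polynomial part plus homogeneous remainder, transform to strip coordinates, take the Mellin transform in $u$, solve the resulting second-order ODE in $v$ explicitly (the formula $\hat q = \frac{\cos(\eps\lambda v)}{\cos(\eps\lambda)}(\lambda+1)\hat f(\lambda+1)$ for the boundary-data piece, and a Green's-function representation for the source piece), and read off the $\YS{k}$ bound from pointwise symbol estimates on $\cos(\eps\lambda v)/\cos(\eps\lambda)$ and on the Green kernel. Two small remarks: the Green function for the mixed Neumann--Dirichlet problem is built from $\cos(\eps\lambda v)$ and $\sin(\eps\lambda(v-1))$, not $\cos(\eps\lambda(1-v))$ (the latter does not vanish at $v=1$); and the paper obtains uniqueness directly from the explicit representation and the symbol bound rather than via a separate Lax--Milgram argument, though your route works just as well.
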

We first address the situation of homogeneous data $f \in \XSoo{k}$ and $g \in
\ZSoo{k}$: The coordinate transform \eqref{def-ts} leads to the following model
for $p$ understood as a function of $(u,v)$:
\begin{gather} \label{p-strip} \left \{
    \begin{array}{ll}
      \partial_u^2 p + (\feps \partial_v)^2 p \ = \ e^{2u} g
      &\text{ in } \R \times (0,1), \\
      p(u, \cdot) \ = \ e^{-u} f_u(u)  \qquad&\text{ for } v=1, \\
      p_v(u, \cdot)  \ = \ 0  \qquad&\text{ for } v=0.
    \end{array}
  \right.
\end{gather}
Application of the Laplace transform \eqref{mellin} in terms of $u$ yields
\begin{gather} \label{p-mellin} %
  \left \{
    \begin{array}{ll}
      \lam^2 \hat p(\lam,v) + (\feps \partial_v)^2 \hat p(\lam,v) \ = \ \hat g(\lam-2,v)
      &\text{ in } \R \times (0,1), \\
      \hat p(\lam, \cdot) \ = \ (\lam+1) \hat f(\lam+1)  \qquad&\text{ for } v=1, \\
      \hat p_v(\lam, \cdot)  \ = \ 0  \qquad&\text{ for } v=0.
    \end{array}
  \right.
\end{gather}
Explicit solution of \eqref{p-mellin} for $g = 0$ yields:
\begin{lemma}
  Suppose that the assumptions of Proposition \ref{prp-p-linear} hold. Suppose
  that $f \in \XSoo{k}$ and $g=0$. Then there is a unique solution $q:=p
  \in\YSoo{k}$ of \eqref{p-wedge}. It is given by
  \begin{align} \label{q-formula} %
    \hat q(\lambda,v) \ = \ \frac{\cos(\eps \lambda v)}{\cos(\eps \lambda)}
    (\lambda+1) \hat f(\lambda+1).
  \end{align}
  Moreover, $\YN{q}{k} \ \leq \ C \XN{f}{k}$.
\end{lemma}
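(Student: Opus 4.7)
The plan is to derive the formula \eqref{q-formula} explicitly by separation of variables in the Laplace-transformed problem, and then to bound $\YN{q}{k}$ term-by-term using a pointwise estimate on the ratio $\cos(\eps\lambda v)/\cos(\eps\lambda)$ against the exponential weight $e^{\frac12\eps|\lambda|(1-v)}$.

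First, since $g=0$, the equation \eqref{p-mellin} for each fixed $\lambda$ on the contour $\Re\lambda = 3\ell-\frac32$ is the ODE $\tfrac1{\eps^2}\hat p_{vv}(\lambda,v)+\lambda^2\hat p(\lambda,v)=0$ with $\hat p_v(\lambda,0)=0$ and $\hat p(\lambda,1)=(\lambda+1)\hat f(\lambda+1)$. The general solution $A\cos(\eps\lambda v)+B\sin(\eps\lambda v)$, combined with the Neumann condition at $v=0$, forces $B=0$; the Dirichlet condition at $v=1$ then determines $A$ and yields \eqref{q-formula}. Uniqueness in the class $\YSoo{k}$ follows since $\cos(\eps\lambda)\neq 0$ on the relevant contours (because $\eps<\pi/(3(2k+1))$ guarantees $\eps(3\ell-\tfrac32)<\pi/2$ for all $\ell\in[\tfrac23,k]$), so any two $\YSoo{k}$-solutions have Mellin transforms agreeing at every $\lambda$ in the strip of convergence.

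The core quantitative step is the pointwise bound, for $\lambda=a+ib$ with $a=3\ell-\frac32$:
\begin{equation*}
\SP e^{\frac12 \eps|\lambda|(1-v)}\frac{|\cos(\eps\lambda v)|}{|\cos(\eps\lambda)|}\ \leq\ C_k,
\end{equation*}
which I would prove by splitting into $|\eps\lambda|\lesssim 1$ and $|\eps\lambda|\gg 1$. In the first regime, both numerator and denominator are bounded and $\cos(\eps\lambda)$ is uniformly bounded below by the condition on $\eps$, while the weight itself is $\OO(1)$. In the second regime, write $|\cos(\eps\lambda v)|\le\cosh(\eps|b|v)\le e^{\eps|b|v}$ and $|\cos(\eps\lambda)|\ge c\cosh(\eps|b|)\ge c e^{\eps|b|}$ (since $\cos(\eps a)$ stays away from $0$), so the ratio decays like $e^{-\eps|b|(1-v)}$; using $|\lambda|\le|b|+|a|\le|b|+C_k$ the weight $e^{\frac12\eps|\lambda|(1-v)}$ is absorbed with exponential margin. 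Since $(\tfrac1\eps\partial_v)^{\alp_2}\cos(\eps\lambda v)$ equals $\pm\lambda^{\alp_2}\cos(\eps\lambda v)$ or $\pm\lambda^{\alp_2}\sin(\eps\lambda v)$, differentiation in $v$ merely multiplies by $\lambda^{\alp_2}$, and both ``$\cos$'' and ``$\sin$'' versions satisfy the same bound. Thus
\begin{equation*}
\SP e^{\frac12\eps|\lambda|(1-v)}\bigl|\Lambda^\alp\hat q(\lambda,v)\bigr|\ \leq\ C_k\,|\lambda|^{|\alp|+1}\,|\hat f(\lambda+1)|.
\end{equation*}

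With this estimate, integrating the square over $\Re\lambda=3\ell-\frac32$ and applying the shift of contour $\lambda\mapsto\lambda-1$ (valid by analyticity of $\hat f$ in the strip and standard decay at $|\Im\lambda|\to\infty$) gives
\begin{equation*}
\YM{q}{\ell}^2\ \leq\ C\nltL{\mu^\ell|\lambda-1|^{3\ell+1}\hat f(\lambda)}{\Re\lambda=3\ell-\frac12}^2.
\end{equation*}
Using $|\lambda-1|^{3\ell+1}\leq C(1+|\lambda|^{3\ell+1})$ together with Lemma \ref{lem-real} controls this by $C\XN{f}{\ell}^2$, and the relation $\mu(\lambda-1)\simeq\mu(\lambda)$ for $|\lambda|\ge 2$ (with the low-frequency piece $|\lambda|\le 2$ handled by the $\nltL{\hat f_0}{\cdot}$ term from \eqref{est-real}) closes the estimate. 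Taking $\sup_{\frac23\leq\ell\leq k}$ yields $\YN{q}{k}\leq C\XN{f}{k}$. The main obstacle is the delicate weight-versus-cosine computation; once that ratio bound is in hand, the rest is a contour shift plus Lemma \ref{lem-real}.
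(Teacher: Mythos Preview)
Your approach is correct and essentially identical to the paper's: derive \eqref{q-formula} from the ODE \eqref{p-mellin}, then establish the pointwise multiplier bound $|\cos(\eps\lambda v)/\cos(\eps\lambda)|\le C e^{\eps|\lambda|(v-1)}$ by splitting into $|\eps\lambda|\le 1$ and $|\eps\lambda|\ge 1$, and read off the $\YS{k}$ estimate. The paper's proof is considerably terser (it records only the multiplier inequality and the two-case justification, deferring details to \cite{KnuepferMasmoudi-2012a}), whereas you spell out the contour shift and the $\mu(\lambda-1)\simeq\mu(\lambda)$ comparison; note that on the shifted line $\Re\lambda=3\ell-\tfrac12$ one has $|\lambda|\ge\tfrac32$, so $|\lambda-1|\le 2|\lambda|$ and $\mu(\lambda-1)\le 2\mu(\lambda)$ directly, which lets you bypass the separate ``low-frequency'' handling you mention.
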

\begin{proof}
  Clearly \eqref{q-formula} solves \eqref{p-mellin}.  The estimate $\YN{p}{k}
  \leq C\XNoo{f}{k}$ follows from
  \begin{align} \label{using} %
    \left|\frac{\sin(\eps \lambda v)}{\cos(\eps \lambda)} \right| \ %
    \leq \ \left|\frac{\cos(\eps \lambda v)}{\cos(\eps \lambda)}
    \right| \ %
    \leq \ C e^{\eps |\lambda| (v-1)} \ \leq \ C e^{\frac 12 \eps
      |\lambda| (v-1)}
  \end{align}
  for $v \in (0,1)$ which also implies uniqueness. Indeed, for $|\eps \lam| \leq
  1$, we have $\mu = |\lam|$, $|\frac {\cos (\eps \lam \nu)}{\cos(\eps \lam)}|
  \leq C$ and $|e^{|\eps \lam| (v-1)}| \geq e^{-1}$. For $|\eps \lam| \geq 1$,
  we have $|\cos(\eps \lam v)| \leq 4 e^{|\eps\lam|v}$ and $|\sin(\eps \lam)|
  \geq \frac 14 e^{|\eps\lam|}$. This proves \eqref{using}. See also
  \cite{KnuepferMasmoudi-2012a}.
\end{proof}
Explicitly solving \eqref{p-strip} for $f = 0$ yields:
\begin{lemma} \label{lem-w-hom} Suppose that the assumptions of Proposition
  \ref{prp-p-linear} hold. Suppose that $f = 0$ and $g \in \ZSoo{k}$. Then there
  is a unique solution $w:=p \in\YSoo{k}$ of \eqref{p-wedge}. It is given by
  \begin{align} \label{w-formula}
    \begin{aligned}
      \hat w(\l,v) \ &= \ \frac\eps{ \lambda  \cos(\e \l) } \cos(\eps \lambda v ) \int_v^1 \sin(\eps \lambda (z-1)  )   \hat g(\lambda-2,z) dz \\
      &\quad + \frac\eps{ \lambda \cos(\e \l) } \sin(\eps \lambda (v-1) ) \int_0^v \cos(\eps
      \lambda z ) \hat g(\lambda-2,z) dz.
    \end{aligned}
  \end{align}
  Moreover, $\YN{w}{k} \ \leq \ C \ZN{g}{k}$.
\end{lemma}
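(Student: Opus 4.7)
The strategy is to verify the formula \eqref{w-formula} directly by variation of parameters in the Mellin variable, and then to deduce the norm estimate by combining a reduction via the ODE itself with pointwise kernel bounds of the type \eqref{using}.

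After Mellin transform in $u$, the problem \eqref{p-wedge} with $f = 0$ becomes, for each fixed $\lambda$, the two-point boundary value ODE
\[
\bigl(\tfrac{1}{\eps}\partial_v\bigr)^2 \hat w(\lambda, v) + \lambda^2 \hat w(\lambda, v) \ = \ \hat g(\lambda-2, v), \quad \partial_v\hat w(\lambda,0) = 0, \quad \hat w(\lambda,1) = 0.
\]
The homogeneous solutions $K(v) := \cos(\eps\lambda v)$ and $S(v) := \sin(\eps\lambda(v-1))$ are adapted to the boundary conditions at $v=0$ and $v=1$ respectively, with Wronskian $\eps\lambda\cos(\eps\lambda)$. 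The standard variation of parameters formula then reproduces exactly \eqref{w-formula}. The hypothesis $\eps < \pi/(3(2k+1))$ ensures that $\cos(\eps\lambda)$ stays bounded away from zero on each integration line $\Re\lambda = 3\ell - 3/2$ with $\ell \in [2/3, k]$, so the formula is well defined and uniqueness follows from the fact that the corresponding homogeneous problem has only the trivial solution.

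For the norm bound I fix $\ell \in [2/3, k]$ and a multi-index $\alpha = (\alpha_1, \alpha_2)$ with $|\alpha| = 3\ell$ and $\alpha_2 \in \mathbb{N}_0$. Writing $\alpha_2 = 2m + r$ with $r \in \{0, 1\}$ and iterating the ODE in the form $(\tfrac{1}{\eps}\partial_v)^2 \hat w = \hat g(\lambda-2, \cdot) - \lambda^2 \hat w$ gives
\[
\Lambda^{\alpha}\hat w \ = \ (-1)^m \lambda^{\alpha_1 + 2m}(\tfrac{1}{\eps}\partial_v)^r \hat w \ + \ \sum_{j=0}^{m-1} (-1)^j \lambda^{\alpha_1 + 2j}(\tfrac{1}{\eps}\partial_v)^{\alpha_2 - 2j - 2} \hat g(\lambda - 2, v).
\]
Each $\hat g$-term has total order $|\alpha| - 2 = 3\ell - 2$ in $(\lambda, \tfrac{1}{\eps}\partial_v)$ and, after the change of variable $\lambda \mapsto \lambda - 2$, sits on the line $\Re\lambda = 3\ell - 7/2$ appearing in \eqref{Z-hom}; these terms are therefore bounded directly by $\ZM{g_0}{\ell}$ after Plancherel. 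For the remaining $\hat w$-terms with $r \in \{0, 1\}$, I differentiate \eqref{w-formula} and exploit the pointwise estimate
\[
\left|\frac{\cos(\eps\lambda v)}{\cos(\eps\lambda)}\right| + \left|\frac{\sin(\eps\lambda(v-1))}{\cos(\eps\lambda)}\right| \ \leq \ C \, e^{\eps|\Im\lambda|(v-1)}, \qquad v \in (0,1),
\]
obtained exactly as in \eqref{using}. Each $v$-derivative either hits one of these kernels, producing an extra factor of $\eps\lambda$, or hits the $z$-integral via Leibniz and vanishes as an endpoint contribution. Multiplying by the $\YM{}{\ell}$-weight $e^{(1/2)\eps|\lambda|(1-v)}$, the kernel absorbs half of the weight and leaves $e^{-(1/2)\eps|\lambda|(1-v)} \leq 1$; factoring out $\sup_z e^{(1/2)\eps|\lambda|(1-z)}|\hat g(\lambda-2, z)|$ from the $z$-integral reduces everything to the trivial bound $\int_0^1 e^{-(1/2)\eps|\lambda|(1-z)} dz \leq C\min\{1, (\eps|\lambda|)^{-1}\}$. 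Combined with the prefactor $\eps/\lambda$ in \eqref{w-formula} and the extra $(\eps\lambda)^r$ from differentiation, this yields $\YM{w_0}{\ell} \leq C\ZM{g_0}{\ell}$; taking the supremum over $\ell$ then gives \eqref{est-p} for $f = 0$.

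The main obstacle is the precise bookkeeping of the powers of $\mu = \min\{\tfrac{1}{\eps}, |\lambda|\}$ in the two regimes $\eps|\lambda| \lesssim 1$ and $\eps|\lambda| \gtrsim 1$. In the low-frequency regime one has $\mu = |\lambda|$, all exponentials are $O(1)$, the factor $(\eps\lambda)^r$ and the $z$-integral are $O(1)$, and the prefactor $\eps/\lambda$ alone accounts for the two missing powers of $|\lambda|$; in the high-frequency regime $\mu = 1/\eps$, the $z$-integral contributes $(\eps|\lambda|)^{-1}$ and $(\eps\lambda)^r$ contributes $\eps|\lambda|$, so the net gain is again $\eps/|\lambda|$, and one recovers the correct matching between $\YM{}{\ell}$ and $\ZM{}{\ell}$ after the shift $\lambda \mapsto \lambda - 2$, using $\mu_\lambda \simeq \mu_{\lambda-2}$ for $|\lambda| \gtrsim 1$. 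Once this two-regime analysis is carried out the rest of the argument is routine.
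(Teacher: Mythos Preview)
Your overall strategy---variation of parameters for the formula, reduction via the ODE to at most one $v$-derivative on $\hat w$, and pointwise kernel bounds for the remaining terms---is exactly the paper's approach (their Green's function derivation and \eqref{side-1}--\eqref{nochwas}). The argument is correct in outline, but the exponential bookkeeping in your kernel/integral step is wrong as written.

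Your claimed bound $\bigl|\sin(\eps\lambda(v-1))/\cos(\eps\lambda)\bigr| \le C\,e^{\eps|\Im\lambda|(v-1)}$ is false: at $v=0$ the left side is $|\tan(\eps\lambda)|\to 1$ for $\Im\lambda\to\infty$ while the right side decays to $0$. The correct bound for this second kernel is $C\,e^{-\eps|\lambda|v}$, which is genuinely different from the bound for $\cos(\eps\lambda v)/\cos(\eps\lambda)$. Relatedly, after you factor out the weighted supremum in $z$, the trigonometric factors $\sin(\eps\lambda(z-1))$ and $\cos(\eps\lambda z)$ are still in the integrand and grow like $e^{\eps|\lambda|(1-z)}$ and $e^{\eps|\lambda|z}$; the residual integrand is therefore $e^{+(1/2)\eps|\lambda|(1-z)}$ for the first term (not $e^{-(1/2)\eps|\lambda|(1-z)}$) and $e^{\eps|\lambda|(3z-1)/2}$ for the second. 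These integrals pick up $v$-dependent growth factors that must be cancelled against the kernel decay you prematurely discarded as $\le 1$. If you keep the two terms separate and track the exponents as the paper does in \eqref{sodala-1}--\eqref{sodala-2}, everything closes and the rest of your argument (the ODE reduction and the two-regime $\mu$-analysis) goes through unchanged.
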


\begin{proof}
  The solution $w$ can be expressed in terms of the Green function $G(v,z)$ by
  \begin{align*} %\label{w3} %
    \hat w (v) \ = \ \int G(v,z) \eps^2 \hat g(\lambda-2, z) dz, \ \text{where} \ 
    \left \{
      \begin{array}{ll}
        \eps^2 \lambda^2 G(v,z)  +  \partial_v^2 G(v,z)  = \delta_{v=z} 
        &\text{in } \Omega, \\
        G = 0&\text{for } v=1, \\ 
        G_v = 0&\text{for } v=0.
      \end{array}
    \right.
  \end{align*}
  Since $G$ is harmonic away from $v=z$ and continuous at $v=z$, it must be of the  form
  % Hence,
  % \begin{gather*}
  %   G(s,z) \ = \ 
  %   \left \{
  %     \begin{array}{ll}
  %       \alpha \cos(\eps \lambda v )
  %       \quad&\text{if } \  v < z , \\
  %       \beta  \sin(\eps \lambda (v-1)  )
  %       \quad&\text{if } \  z < v ,
  %     \end{array}
  %   \right.
  % \end{gather*}
  % $G$ has to be continuous and hence, $\alpha = C \sin(\eps \lambda (z-1) )
  % $ and $\beta = C \cos(\eps \lambda z ) $
  \begin{gather*}
    G(v,z) \ = \  \left \{
      \begin{array}{ll}
        C \sin(\eps \lambda (z-1)  )    \cos(\eps \lambda v )
        \quad&\text{if } \  v < z , \\
        C  \cos(\eps \lambda z )     \sin(\eps \lambda (v-1)  )
        \quad&\text{if } \  z < v,
      \end{array}
    \right.
  \end{gather*}
  for some constant $C$ to be determined. Taking the derivative of the above
  equation in $v$, we get
  \begin{gather*}
    \partial_v G(v,z) \ = \ \left \{
      \begin{array}{ll}
        - C  \eps \lambda   \sin(\eps \lambda (z-1)  )        \sin(\eps \lambda s )
        \quad&\text{if } \  v < z , \\
        C\eps \lambda  \cos(\eps \lambda (v-1)  )\cos(\eps \lambda z ) 
        \quad&\text{if } \  z < v.
      \end{array}
    \right.
  \end{gather*}
  Since the jump of $\partial_v G(v,z) $ at $v=z$ is 1, namely $[\partial_v
  G(v,z) ] = 1 $, we deduce that
  \begin{align*}
    \frac1C \ = \ \cos(\eps \lambda (z-1) )\cos(\eps \lambda z ) +
    \sin(\eps \lambda (z-1) ) \sin(\eps \lambda z ) \ = \ \eps \lambda
    \cos(\e \l)
  \end{align*}
  which implies \eqref{w-formula}.  We next give the proof of the estimate: We
  will use that
  \begin{align*}
    \left| \sin(\eps \lambda (v-1) ) \right| \leq C e^{|\e \l| (1-v)}, \ \  %
    \left| \cos(\eps \lambda v ) \right| \leq C e^{|\e \l| v} \ \ \text{and}
    \ \ %
    \left| \frac{ \cos(\eps \lambda v ) }{ \cos(\e \l) } \right| \leq C e^{|\e
      \l| (v-1)}.
  \end{align*}
  Hence, using the notation $\hat g(\sigma) = \hat g(\lambda-2,\sigma)$, we
  infer that
  \begin{align} \label{sodala-1} %
    \Big| \int_v^1 \sin(\eps \lambda (\sigma-1) )
    \hat g(\sigma) d\sigma \Big| \ & \leq \ C \int_v^1 e^{|\e \l| (1-\sigma)}
    e^{-\frac{|\e \l|}2 (1-\sigma)} \sup_{\sigma \in (0,1)} \big| e^{\frac{|\e \l|}2
      (1-\sigma)} \hat g(\sigma)  \big|  d\sigma   \notag  \\
    &\leq \ \frac{C}{\e \l} e^{ \frac{|\e \l|}2 (1-v)}
    \sup_{\sigma \in (0,1)} \big|e^{\frac{|\e \l|}2 (1-\sigma)} \hat g(\sigma)
    \big|
  \end{align}
  and
  \begin{align} \label{sodala-2} %
    \Big| \int_0^v \cos(\eps \lambda \sigma ) \hat g(\sigma) d\sigma \Big| \
    &\leq \ C \int_0^v e^{|\e \l| \sigma} e^{-\frac{|\e \l|}2 (1-\sigma)}
    \sup_{\sigma \in (0,1)} \big| e^{\frac{|\e \l|}2
      (1-\sigma)} \hat g(\sigma) \big|  d\sigma   \notag  \\
    &\leq \ \frac{C}{\e \l} e^{ \frac{|\e \l|}2 ( 3 v - 1)} \sup_{\sigma \in
      (0,1)} \big| e^{\frac{|\e \l|}2 (1-\sigma)} \hat g(\sigma) \big|.
  \end{align}
  By \eqref{sodala-1}-\eqref{sodala-2} and in view of \eqref{w-formula}, we
  deduce that
  \begin{equation} \label{side-1} %
    |\hat w(\l,v) | \ \leq \ \frac{C}{|\lam|^2} e^{ \frac{|\e \l|}2 (v - 1 ) }
    \sup_{v \in (0,1)} \big| e^{\frac{|\e \l|}2 (1-v)} \hat g(\l-2, v) \big|.
  \end{equation} 
  We calculate the first derivative,
  \begin{align} \label{first-der} %
    \partial_v \hat w(\l,v) \ & = \ - \frac{\eps^2}{ \cos(\e \l) } \sin(\eps
    \lambda v )
    \int_v^1 \sin(\eps \lambda (\tilde v-1)  )   \hat g(\lambda-2,\tilde v) d\tilde v \notag \\
    &\quad \ - \frac\eps{ \lambda  \cos(\e \l) } \cos(\eps \lambda v )  \sin(\eps \lambda (v-1)  )   \hat g(\lambda-2,v) \notag  \\
    &\quad \ + \frac{\eps^2}{ \cos(\e \l) } \cos(\eps \lambda (v-1) )
    \int_0^v
    \cos(\eps \lambda \tilde v )   \hat  g(\lambda-2,\tilde v) d\tilde v \notag \\
    &\quad \ + \frac\eps{ \lambda \cos(\e \l) } \sin(\eps \lambda (v-1) )
    \cos(\eps \lambda v ) \hat g(\lambda-2,v).
  \end{align}
  A similar calculation as before shows that
  \begin{equation} \label{side-2} %
    |\feps \hat w_v(\l,v)| \ \leq \ \frac C{|\lam|} e^{ \frac{|\e \l|}2 (v - 1 )  } 
    \sup_{v \in (0,1)} \big| e^{\frac{|\e \l|}2  
      (1-v)} \hat g(\l-2, v) \big|. 
  \end{equation}
  Multiplication of both sides of \eqref{side-1}-\eqref{side-2} by $|\lam|^k$
  yields higher regularity in the radial variables. Higher regularity in $v$
  follows by \eqref{side-1}, \eqref{side-2} and repeated application of
  \begin{align} \label{nochwas} %
    |\lam^{k} {\textstyle (\frac 1\eps)^2} \hat w_{vv}(\lam,v)| \ %
    \lupref{p-mellin}\leq \ |\lam^{k+2} \hat w(\lam,v)| + |\lam^k \hat
    g(\lam-2,v)|.
  \end{align}
 Estimates \eqref{side-1}, \eqref{side-2} and \eqref{nochwas} imply
  \begin{equation*}
    \sum_{|\alp| = 3k+2}  \sup_{v \in (0,1)} | e^{ \frac{|\e \l|}2 (1-v)} \Lambda^{3k+2} \mu^{k} \hat w(\l,v)| \ % 
    \leq \ C \sum_{|\alp| = 3k}  \sup_{v \in (0,1)}  \big|e^{\frac{|\e \l|}2(1-v)} \Lam^{3k} \mu^{k}\hat g(\l-2, v) \big|.
  \end{equation*}
  Estimate \eqref{est-p} follows by taking the $L^2$-norm on the line $\Re
  \lambda = 3k-\frac 32$ on both sides.
\end{proof}

  \begin{proof}[Proof of Proposition \ref{prp-p-linear}]
    Let $\PP_f$ be the Taylor polynomial of $f$ at $x= 0$ of order
    $n_k=3k-1$; let $\PP_g$ be the Taylor polynomial of $g$ at
    $(x,y) = 0$ of order $n_k - 3$. Let $\PP_p$ be the polynomial
    solving \eqref{p-wedge} with $g$ and $f$ replaced by $\PP_g$ and
    $\PP_f$.  Existence and uniqueness of this polynomial solution
    follows from a straightforward calculation and furthermore
    \begin{align}
      \NNN{\PP_p}{\PP} \ \leq \ C \left(\NNN{\PP_f}{\PP} + \NNN{\PP_g}{\PP} \right),
    \end{align}
    see also the proof of Lemma 4.2 in \cite{KnuepferMasmoudi-2012a} as well  
 as the proof of Lemma \ref{lem-pressure-trace} below.  Let $\zeta :
    K \to \R$ be a cut-off such that $\zeta = \zeta(r)$ with $\zeta = 1$ in
    $[0,\frac 18]$, $\zeta = 0$ outside $[0,\frac 14]$ and such that $0 \leq
    \zeta \leq 1$ and let $\tilde \zeta ; (0,\infty) \to \R$ be given by $\tilde
    \zeta(x) = \zeta(|(x,x)|)$. We define
    \begin{align}
      p_1 \ := \ \PP_p \zeta, && %
      g_1 \ := \ \Delta_\eps p_1, && %
      f_1 \ := \ \tilde \zeta \int_0^x \PP_p \ d\tilde x,
    \end{align}
    and $g_0 := g - g_1 \in \ZSoo{k}$ and $f_0 := f - f_1 \in \XSoo{k}$.
    Furthermore, let $p_0$ be the solution of $\Delta p_0 = g_0$ in $K$ with
    $p_0 = \eta_0$ on $\partial_1 K$ and $\partial_y p_{0} = 0$ on $\partial_0
    K$.  By the previous two lemmas, there exists such a solution satisfying
    \begin{align}
      \YN{p_0}{k} \ \leq \ C \left(\ZNoo{g_0}{k} + \XNoo{f_0}{k} \right).   
    \end{align}
    Hence, $p := p_0 + p_1$ is a solution of \eqref{p-wedge} and satisfies the
    desired estimate.
\end{proof}
Since $\eps$ has the scaling of vertical length, one could expect that there is
only uniform control on the norm $\feps (
p_y)_{|\Gamma}$. But it turns out that even $({\textstyle \frac
  1\eps})^2(p_y)_{|\Gamma}$ is bounded uniformly for $\eps > 0$. The proof of
this statement is given in the next lemma:
\begin{lemma} \label{lem-pressure-trace} %
  Let $k \geq 1$, $\eps \in (0, \frac {\pi}{3(2k+1)})$. Then the solution $p \in
  \YS{k}$ of \eqref{p-wedge} satisfies
  \begin{align} \label{est-p-trace} %
    \XN{\partial_x(p_x)_{|\Gamma}}{k} + (\feps)^2
    \XN{\partial_x(p_y)_{|\Gamma}}{k} \ \leq \ C \big(\XN{f}{k+1} + \ZN{g}{k+1} \big).
  \end{align}
\end{lemma}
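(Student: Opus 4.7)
\medskip

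\noindent\textbf{Proof strategy.} The plan is to lift the regularity of $p$ by one using Proposition \ref{prp-p-linear}, then to convert the trace information on the line $v=1$ into $\XS{k}$ estimates for the one-dimensional boundary traces. Concretely, I would first apply Proposition \ref{prp-p-linear} with $k$ replaced by $k+1$ to obtain
\begin{align*}
  \YN{p}{k+1} \ \leq \ C\bigl(\ZN{g}{k+1} + \XN{f}{k+1}\bigr).
\end{align*}
By the definition \eqref{Y-hom} of the homogeneous $\YSoo{\cdot}$-seminorms, this gives, for every $\ell \in [\tfrac 23, k+1]$ and every multi-index $\alpha$ with $|\alpha|=3\ell$,
\begin{align*}
  \nltL{\sup_{v\in(0,1)} |e^{\frac 12\eps|\lambda|(1-v)}\Lambda^\alpha \mu^\ell \widehat{p_0}|}{\Re\lambda = 3\ell - \tfrac 32} \ \leq \ C\YM{p_0}{\ell}.
\end{align*}
At $v=1$ the exponential factor becomes $1$, so this supremum bound collapses to a pointwise estimate at the trace: for each $\alpha$ with $|\alpha|=3\ell$, the function $\lambda \mapsto \Lambda^\alpha \mu^\ell \widehat{p_0}(\lambda,1)$ lies in $L^2(\Re\lambda=3\ell-3/2)$, which is exactly the type of quantity estimated in \eqref{norm-lambda}. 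This is the key mechanism: the factor $e^{\frac 12\eps|\lambda|(1-v)}$ in the definition of $\YSoo{\cdot}$ was inserted precisely to absorb the exponential growth of the Poisson kernel in the Mellin variable, so that at $v=1$ all derivatives of $p$ are controlled without any loss.

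\medskip

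Next, I would use \eqref{eq-ts} to write, at $v=1$,
\begin{align*}
  p_x|_\Gamma &\ =\ e^{-u}\cos\eps\cdot \partial_u p|_{v=1} \ -\ e^{-u}\sin\eps\cdot (\tfrac 1\eps\partial_v p)|_{v=1}, \\
  \tfrac 1\eps p_y|_\Gamma &\ =\ e^{-u}\sin\eps\cdot \partial_u p|_{v=1} \ -\ e^{-u}\cos\eps\cdot (\tfrac 1\eps\partial_v p)|_{v=1}.
\end{align*}
The Dirichlet condition $p(u,1)=f_x(e^u\cos\eps)$ identifies $\partial_u p|_{v=1}$ with $x f_{xx}$ (up to the harmless factor $\cos\eps$), so the $\partial_u p$-contributions to $p_x|_\Gamma$ and $\tfrac 1\eps p_y|_\Gamma$ reduce to $\cos\eps\cdot f_{xx}$ and $\tfrac{\sin\eps}{\eps}\cdot \eps f_{xx}$ respectively; both are controlled in $\XS{k+1/2}$ by $\XN{f}{k+1}$ thanks to Proposition \ref{prp-X}(3). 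The $(\tfrac 1\eps\partial_v p)|_{v=1}$-contributions are estimated directly from the $\YS{k+1}$ bound above, taking $\alpha=(\alpha_1,1)$: the $L^2$-on-lines characterization of $\XS{k+1/2}$ furnished by Lemma \ref{lem-real} and Lemma \ref{lem-X-laplace} then yields
\begin{align*}
  \XN{p_x|_\Gamma}{k+1/2} + \tfrac 1\eps \XN{p_y|_\Gamma}{k+1/2} \ \leq \ C\bigl(\XN{f}{k+1}+\ZN{g}{k+1}\bigr).
\end{align*}
Applying one more $\partial_x$ via Proposition \ref{prp-X}(3) produces the bound with $\XS{k}$-norms on the left-hand side.

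\medskip

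The only subtle point is the extra factor $\tfrac 1\eps$ in front of $\XN{\partial_x(p_y)_{|\Gamma}}{k}$: what I get immediately is a bound with a single factor of $\tfrac 1\eps$, coming from the reasoning above (which is just the scaling that makes $\tfrac 1\eps p_y$ equivalent to $p_x$). The second factor of $\tfrac 1\eps$ is gained by observing that, in the identities for $p_x|_\Gamma$ and $\tfrac 1\eps p_y|_\Gamma$, the $\partial_u p$-term in $\tfrac 1\eps p_y|_\Gamma$ carries a prefactor $\sin\eps=\eps+O(\eps^3)$ (whereas in $p_x|_\Gamma$ the prefactor is $\cos\eps=1+O(\eps^2)$). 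Thus $\tfrac 1\eps p_y|_\Gamma$ minus $\cos\eps\cdot p_x|_\Gamma/\sin\eps$ eliminates the $(\tfrac 1\eps\partial_v p)$-term (up to reshuffling); equivalently, the boundary identity $p_x|_\Gamma + m\, p_y|_\Gamma = (f_x)_x = f_{xx}$ with $m=\tfrac 1\eps\tan\eps$ allows to express $p_y|_\Gamma$ as $(f_{xx}-p_x|_\Gamma)/m$ with $m=1+O(\eps^2)$, yielding the improvement from one to two factors of $\tfrac 1\eps$. I expect the main technical obstacle to be precisely this bookkeeping of the $\eps$-powers and the Mellin-line shifts induced by the operators $\partial_x$ and by restriction to $\Gamma$; once these match up, the estimate is an immediate consequence of the $\YS{k+1}$-bound from Proposition \ref{prp-p-linear} combined with the elementary trace-type properties of the spaces $\XS{\cdot}$ recorded in Section \ref{sec-estimates}.
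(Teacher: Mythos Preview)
Your first step is sound: Proposition \ref{prp-p-linear} applied at level $k+1$ gives $\YN{p}{k+1} \le C(\XN{f}{k+1}+\ZN{g}{k+1})$, and evaluating the $\YSoo{\cdot}$-seminorms at $v=1$ does yield $L^2$-on-lines control of $\Lambda^\alpha\mu^\ell\hat p(\cdot,1)$. As you yourself note, this produces a bound on $\tfrac1\eps\XN{\partial_x(p_y)_{|\Gamma}}{k}$, i.e.\ one power of $\tfrac1\eps$. The problem is your mechanism for the \emph{second} power.

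The boundary identity you invoke, $p_x|_\Gamma + m\,p_y|_\Gamma = f_{xx}$ with $m=1+O(\eps^2)$, does not produce any $\eps$-gain: it merely expresses $p_y|_\Gamma$ as $(f_{xx}-p_x|_\Gamma)/m$, a difference of two $O(1)$ quantities divided by an $O(1)$ constant. There is no cancellation here unless you already know $p_x|_\Gamma = f_{xx}+O(\eps^2)$, which is equivalent to what you are trying to prove. Likewise, linear combinations of your two displayed formulas for $p_x|_\Gamma$ and $\tfrac1\eps p_y|_\Gamma$ only reshuffle $\partial_u p|_{v=1}$ and $\tfrac1\eps\partial_v p|_{v=1}$; eliminating one leaves the other with an $O(1)$ coefficient. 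The gain is \emph{not} algebraic at the trace.

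The paper obtains the extra $\eps$ from the explicit Mellin representations, and separately for each of the three pieces $p=q_0+w_0+p_1$. For $q_0$ (boundary data $f_0$, $g=0$), formula \eqref{q-formula} gives $\partial_v\hat q_0(\lambda,1)=-\eps\lambda\tan(\eps\lambda)(\lambda+1)\hat f_0(\lambda+1)$, and the factor $|\tan(\eps\lambda)|\le C\eps\mu$ supplies the additional $\eps$. For $w_0$ (data $g_0$, $f=0$), the four-term expression \eqref{first-der} for $\partial_v\hat w$ collapses at $v=1$ to the single term \eqref{2-der}, and the analysis of that integral (splitting $|\eps\lambda|\lessgtr 1$) yields the bound $|\hat\phi(\lambda)|\le C\mu\sup_v|e^{\frac12\eps|\lambda|(1-v)}\hat g_0(\lambda,v)|$, again an extra factor of $\mu$ rather than $|\lambda|$. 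For the polynomial part $p_1$, the Neumann condition $p_y|_{y=0}=0$ forces $a_{i,1}=0$, and the equation $\partial_y^2 p=\eps^2(g-\partial_x^2 p)$ then propagates the $\eps^2$ smallness to all $a_{i,2j+1}$ and to the even-$j$ coefficients with $j\ge1$. None of these gains is visible from the $\YS{k+1}$-norm alone, because that norm only controls $\tfrac1\eps\partial_v p$ on the same footing as $\lambda p$; it does not record the extra smallness of $\partial_v p$ at $v=1$ that comes from the specific boundary-value structure.
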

\begin{proof}
  Analogously as in the proof of Proposition \ref{prp-p-linear}, we decompose $p
  = p_1 + p_0$ where $p_1$ encodes the expansion at the boundary and where $p_0
  \in \YSoo{k+1}$ is the solution of \eqref{p-wedge} with corresponding
  homogeneous data $f_0 \in \XSoo{k+1}$ and $g_0 \in \XS{k+1}$. Furthermore, let
  $p_0 = q_0 + w_0$ where $q_0$ is the solution of \eqref{p-wedge} with boundary
  data $f_0$ and with right hand side $g=0$. Correspondingly, $w_0$ is the
  solution with right hand side $g_0$ and with boundary data $f=0$. In the
  sequel, we present give the corresponding estimates to \eqref{est-p-trace} for
  $q_0$, $w_0$ and $p_1$; together these estimates imply \eqref{est-p-trace}.

  \medskip

  {\it Estimate for $q_0$: } With the transformation \eqref{eq-ts}, we need to
  show
\begin{align} \label{q0-1} %
  \XN{e^{-2u} q_{0,uu|\Gamma}}{k} + (\feps)^2 \XN{e^{-2u} q_{0,vu|\Gamma}}{k} \
  \leq \ C \XN{f_0}{k+1}.
  \end{align}
  Here, and in the following, by a slight abuse of notation we understand $q_0$
  as a function of $(u,v)$.  By \eqref{q-formula}, since $|\sin(\eps
  \lam)/\cos(\eps \lam)| \leq \ C \mu$ and for $|\Re \lam| \in (\frac 12,
  4k-\frac 12)$ we obtain
  \begin{align*}
    |\widehat{e^{-2u} q_{0,vu}}(\lambda, 1)| \ %
    &= \ \big|\eps \lambda^2 \frac{\sin(\eps (\lambda+2))}{\cos (\eps
      (\lambda+2))} (\lambda+3) \hat f_0(\lambda + 3)\big| \ %
    \leq \ C |\eps^2 \lambda^3 \mu \hat f_0(\lambda + 3)|.
  \end{align*}
  Multiplying   this identity by  $|\lam|^{3k}\mu^{k+1}$ and  taking
  the $L^2$-norm on the line $\Re \lam = 3k - \frac 12$, we obtain the estimate
  for the second term on the left hand side of \eqref{q0-1}. The estimate for
  the first term proceeds analogously.

  \medskip

  {\it Estimate for $w_0$:} With the transform we need to show
  \begin{align} \label{fsfs} %
    \XN{e^{-2u} w_{0,uu|\Gamma}}{k} + (\feps)^2 \XN{e^{-2u} w_{0,vu|\Gamma}}{k}
    \ \leq \ C \ZN{g_0}{k+1}.
  \end{align}
  Evaluating \eqref{first-der} at $\nu=1$ we
  note that only the third term does not vanish, i.e.
  \begin{align} \label{2-der} %
    \partial_v \hat w(\l,1) \ \lupref{first-der}= \ \frac{\eps^2}{ \cos(\e \l) }
    \int_0^1 \cos(\eps \lambda \tilde v ) \hat g(\lambda-2,\tilde v) d\tilde v.
  \end{align}
  With the notation $\phi = (\feps)^2 e^{-2u} w_{0,vu|\Gamma}$ and since $\hat
  \phi(\lam) = \feps (\lam+2) \widehat{w_{0,v}}(\lam+2, 1)$, we get
  \begin{align} \label{todotodo} %
    \hat \phi(\lam) \ %
    \lupref{2-der}= \ \frac{\lambda+2}{\cos(\eps (\lambda+2))} \int_0^v
    \cos(\eps (\lambda+2) \tilde v ) \hat g_0(\lambda,\tilde v) d\tilde v.
  \end{align}
%  Using $|\cos(\e (\l+2))|^{-1} \leq C \eps \mu$ and 
  For $|\eps \lam| \leq 1$, we have $|e^{\frac{|\e \l|}2 (1-v)}| \leq 1$ and
  $|\cos (\eps \l)| \geq \frac 12$, $|\l| = \mu$ and therefore
  \begin{align}
    \big| \hat \phi(\lam) \big| \ %
    \leq \ C |\lambda| \sup_{v \in (0,1)} |\hat g_0(\lambda,v)| \ %
    \leq \ C \mu \sup_{v \in (0,1)} |e^{\frac{|\e \l|}2 (1-v)} \hat
    g_0(\lambda,v)| \ %
  \end{align}
  For $|\eps \lam| \geq 1$, application of \eqref{sodala-2} yields
  \begin{align} \label{sodala-3} %
    \big| \hat \phi(\lam) \big| \ %
    \leq \ \tfrac{C}{\e} \sup_{v \in (0,1)} \big| e^{\frac{|\e \l|}2
      (1-v)} \hat g(\lam, v) \big| %
    \leq \ C \mu \sup_{v \in (0,1)} \big| e^{\frac{|\e \l|}2 (1-v)}
    \hat g(\lam, v) \big|, %
  \end{align}
  where we also used that $\mu = \feps$ in this case. The above two inequalities
  together imply
  \begin{align*}
    |\lambda^{3\ell+1} \mu^\ell \hat \phi(\lam)| \ %
    \leq \ C \sup_{v \in (0,1)} \big|e^{\frac{|\e \l|}2 (1-v)} \lambda^{3\ell+1}
    \mu^{\ell+1} \hat g_0(\lambda,v) \big|
  \end{align*}
  for all $\ell \geq 0$.  Integrating the square of the above estimate on the
  line $\Re \lam = 3k - \frac 12$ yields
  \begin{align*}
    \XM{\phi}{\ell} \ %
    &= \ \nltL{\lambda^{3\ell+1} \mu^\ell \hat \phi}{\Re \lambda = 3\ell-\frac
      12} % \\%
    \leq \ C \nltL{\sup_{v \in (0,1)} \big|e^{\frac{|\e \l|}2 (1-v)}
      \lambda^{3\ell+1} \mu^{\ell+1} \hat g_0 \big|}{\Re \lambda = 3\ell-\frac
      12} \\ %
    &\leq \ C \ZN{g_0}{k+1},
  \end{align*}
  which concludes the estimate for the second term on the left hand side of
  \eqref{fsfs}. The estimate of the first term on the left hand side of
  \eqref{fsfs} proceeds similarly.

  \medskip

  {\it Estimate for $p_1$:} Let $\PP_p = \sum_{i+j \leq \ell} a_{ij}
  x^i y^j$ be the polynomial which solves \eqref{p-wedge} with
  boundary data $\PP_f$ and right hand side $\PP_g$, where $\PP_f =
  \sum_i b_i x^i$ is the Taylor polynomial of $f$ of order $3k-1$ and
  where $\PP_g = \sum_{ij} g_{ij} x^i y^j$ is the Taylor polynomial of
  $g$ of order $3k-4$. Analogously as in the proof of Lemma
  \ref{lem-w-hom}, we need to show that the coefficients of the
  polynomial $(\feps)^2 \PP_{p,y|\Gamma}$ are bounded by the
  coefficients of $f$: Indeed, since by the condition we have
  $\partial_x^i \PP_{p,y|y=0} = 0$, it follows that $a_{i,1}=0$ for
  all $i \geq 0$. With the equation, i.e.
  \begin{align} \label{theq} %
    \partial_y^2 \partial_x^i \PP_{p,y} \ = \ \eps^2  \left(\partial_x^i \PP_{g,y} -
      \partial_x^2 \PP_{p,y} \right),
  \end{align}
  we first get $|a_{i,3}| \leq \eps^2 \NNN{\PP_g}{}$ and then
  iteratively $|a_{i,2j+1}| \leq \eps^2 \NNN{\PP_g}{}$ for all $i,j
  \geq 0$. Furthermore since $\PP_{p|\Gamma} = f_x$ and again using
  \eqref{theq}, one can easily deduce that $|a_{i,2j}| \leq C \eps^2
  (|b_{i+j}| + \NNN{\PP_g}{})$ for all $i,j \geq 0$ (we e.g. have
  $a_{02} = \eps^2 (a_{20} + g_{00})$ and $a_{02} + a_{02} = b_2$). In
  particular, $|a_{ij}| \leq \eps^2 (\NNN{\PP_f}{\PP} +
  \NNN{\PP_g}{\PP})$ for all $i \geq 0, j \geq 1$ which yields the
  desired estimate.
\end{proof}

\subsection{Pull-back onto wedge} 

We need to measure the difference $p_1 - p_2$, where $p_1$ is the solution for
the pressure on the domain $\tilde K^{\phi_1}$ and $p_2$ is the corresponding
solution on $\tilde K^{\phi_2}$, see Proposition \ref{prp-p-nonlinear}.  For
this, for given profile function $\phi$, we introduce a pull-back from the
perturbed wedge $K^\phi$ to the unperturbed wedge $K$, see Fig.
\ref{fig-psi}a). The estimates are nonlinear due to the geometry of the domain.

\begin{figure}
  \centering
  \begin{minipage}{0.08\linewidth}
    \hfill
  \end{minipage}
  \begin{minipage}{0.22\linewidth}
    \centering
    \includegraphics[width=1.7cm]{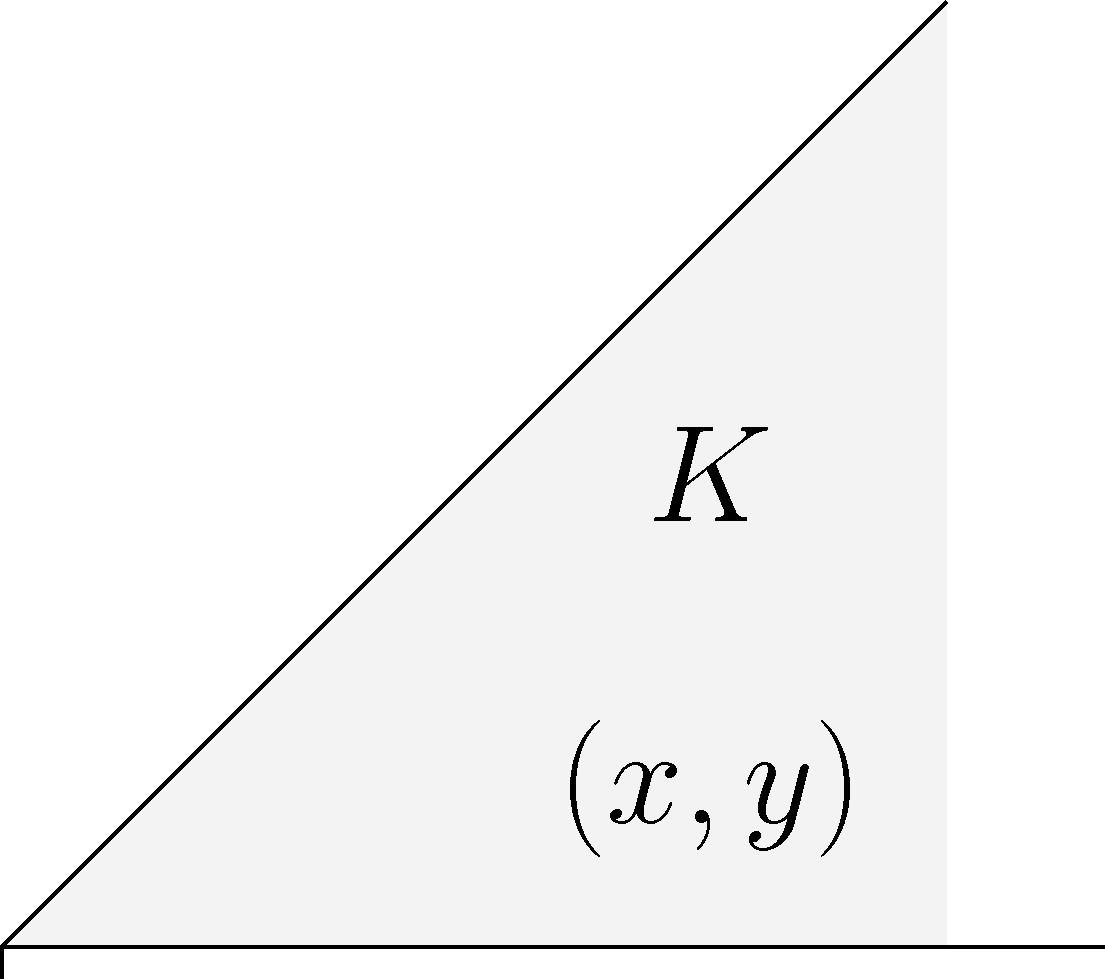}
  \end{minipage}
  \begin{minipage}{0.1\linewidth}
    \centering
    \includegraphics[width=1.7cm]{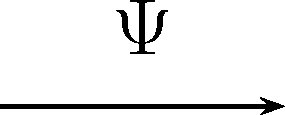}
  \end{minipage}
  \begin{minipage}{0.22\linewidth}
    \centering
    \includegraphics[width=1.7cm]{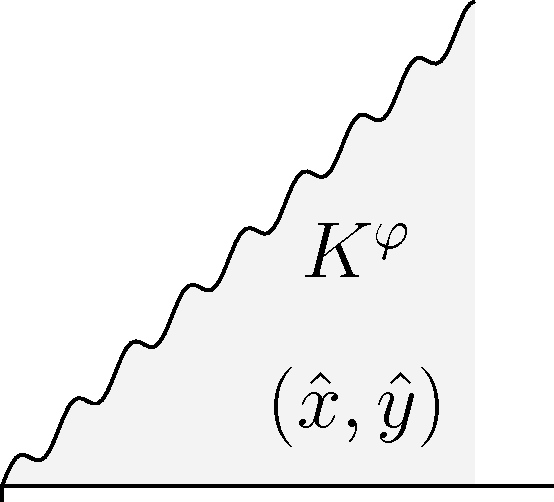}
  \end{minipage}
  \caption{Coordinate transform onto moving domain}
    \label{fig-psi}
\end{figure}

\begin{lemma} \label{lem-psi} %
  Let $k \in \N$ with $k \geq 1$, $\eps \in (0, \frac {\pi}{3(2k+1)})$. Then for any
  $\phi \in \XS{k}$ with $\XN{\phi}{k}$ sufficiently small, there is a
  diffeomorphism $\Psi: K \to K^\phi$ of the form
  \begin{align} \label{psi-form}  %
    \Psi(x,y) \ = \ (x,y) + (0, \psi(x,y)) \ = \ (\hat x, \hat y).
  \end{align}
  Furthermore, analogously as in the definition \eqref{Y-hom}, there is a
  decomposition $\psi = \psi_0 + \psi_1$ such that $\psi_1 = \zeta \PP$ and
  where $\PP$ is a polynomial of order $3k$ such that
  \begin{align} \label{est-psi} %
    \sup_{0 \leq \ell \leq k} \sum_{|\alp| = 3\ell+2} %
    \nltL{\sup_{0 \leq v \leq 1} \big| e^{\frac 12 \eps |\lambda| (1-v)}
      \mu^{\ell+1} \Lambda^{\alp} \widehat{\eps \psi_0} \big|}{\Re \lambda =
      3\ell+\frac 12} + \NNN{\PP}{\PP} \ %
    \leq \ C \XN{\phi}{k}
  \end{align}
  where we recall that $\mu = \inf \{ |\lam|, \frac 1\eps \}$. Furthermore, we
  have
  \begin{align} \label{psi-2} %
    \XN{\eps \psi_{x|\Gamma}}{k} + \XN{\psi_{y|\Gamma}}{k} \ \leq \ C
    \XN{\phi}{k}.
  \end{align}
\end{lemma}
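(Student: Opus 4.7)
The plan is to construct $\psi$ as the harmonic extension (with respect to the anisotropic operator $\Delta_\eps$) of the antiderivative $\Phi(x) := \int_0^x \phi(\tilde x)\,d\tilde x$ from the upper face $\Gamma$. Precisely, $\psi$ is defined as the unique solution of
\begin{equation*}
  \Delta_\eps \psi = 0 \text{ in } K, \quad \psi(x, x) = \Phi(x) \text{ on } \Gamma, \quad \psi(x, 0) = 0 \text{ on } \Gamma_0.
\end{equation*}
By construction $\Psi(x,0) = (x,0)$ and $\Psi(x,x) = (x, x + \Phi(x)) = (x, h^\phi(x))$, so $\Psi$ fixes $\Gamma_0$ and sends $\Gamma$ onto $\Gamma^\phi$. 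Once the boundary bound \eqref{psi-2} is in hand, smallness of $\XN{\phi}{k}$ together with the Sobolev-type embedding \eqref{X-sob} yields $\|\psi_y\|_{L^\infty} \leq \tfrac{1}{2}$, so that the Jacobian $1 + \psi_y$ remains positive and $\Psi: K \to K^\phi$ is a diffeomorphism.

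Passing to the coordinates $(u,v)$ of \eqref{def-ts}, the equation for $\psi$ becomes $\psi_{uu} + \eps^{-2} \psi_{vv} = 0$ on the strip $\R \times (0,1)$ with boundary data $\psi(u,0) = 0$ and $\psi(u,1) = \Phi(e^u \cos\eps)$. Laplace transform in $u$ yields the explicit formula
\begin{equation*}
  \hat\psi(\lambda, v) \ = \ \frac{\sin(\eps \lambda v)}{\sin(\eps \lambda)} \, (\cos\eps)^\lambda \, \widehat\Phi(\lambda), \qquad \widehat\Phi(\lambda) \ = \ \frac{\hat\phi(\lambda - 1)}{\lambda},
\end{equation*}
the second identity being the Mellin rule for antiderivatives recalled in Section~\ref{sec-setting}. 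The crucial multiplier bound
\begin{equation*}
  \Bigl| \frac{\sin(\eps \lambda v)}{\sin(\eps \lambda)} \Bigr| \ \leq \ C\, e^{-\eps |\Im \lambda|(1-v)}
\end{equation*}
holds uniformly for $v \in [0,1]$ and $\Re\lambda$ bounded away from the zeros $\eps\lambda \in \pi\Z$, and is precisely the source of the exponential weight $e^{\frac 12 \eps |\lambda|(1-v)}$ in the target norm \eqref{est-psi}. The $\XS{k}$-decomposition $\phi = \phi_0 + \phi_1$ induces $\psi = \psi_0 + \psi_1$; the polynomial piece $\phi_1 = \zeta \PP_\phi$ produces an explicit polynomial harmonic extension $\psi_1 = \zeta \PP$ of degree $3k$, computed as in the proof of Lemma~\ref{lem-pressure-trace}.

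For the homogeneous part, applying $\Lambda^\alp$ with $|\alp| = 3\ell + 2$ brings down a factor $\lambda^{|\alp|}$ times bounded sine/cosine expressions in $v$; combining this with the multiplier bound, the identity $|\widehat\Phi_0(\lambda)| = |\hat\phi_0(\lambda-1)|/|\lambda|$, and $\eps\mu \leq 1$, the pointwise estimate
\begin{equation*}
  \sup_{v \in (0,1)} \bigl| e^{\frac 12 \eps |\lambda|(1-v)} \mu^{\ell+1} \Lambda^\alp \widehat{\eps\psi_0}(\lambda, v) \bigr| \ \leq \ C\, \mu^\ell |\lambda|^{3\ell + 1} |\hat\phi_0(\lambda - 1)|
\end{equation*}
follows. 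A shift $\lambda \mapsto \lambda + 1$ together with Lemma~\ref{lem-real} then bounds the $L^2$-norm on $\Re\lambda = 3\ell + \tfrac{1}{2}$ by $\XN{\phi_0}{\ell} \leq \XN{\phi}{k}$, proving \eqref{est-psi}. The trace bound \eqref{psi-2} follows from the same Mellin representation evaluated at $v = 1$: the chain rule formulas \eqref{eq-ts} express $\psi_{y|\Gamma}$ and $\eps \psi_{x|\Gamma}$ in terms of $\hat\psi_u(\lambda, 1) = \lambda(\cos\eps)^\lambda \widehat\Phi$ and $\hat\psi_v(\lambda, 1) = \eps\lambda \cot(\eps\lambda)(\cos\eps)^\lambda \widehat\Phi$, producing multipliers of schematic form $\hat\phi(\lambda)/(\lambda+1)$ that are controlled by $\XN{\phi}{k}$ since $k \geq 1$ keeps the integration line $\Re\lambda = 3k - \tfrac 12$ safely away from the pole at $\lambda = -1$. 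The main obstacle is the delicate splitting of the Mellin estimates into the two regimes $|\eps\lambda| \lesssim 1$ and $|\eps\lambda| \gg 1$, in which the multiplier $\sin(\eps\lambda v)/\sin(\eps\lambda)$ and the weight $\mu = \min\{1/\eps, |\lambda|\}$ behave very differently and must be carefully matched to obtain bounds uniform in the small parameter $\eps \in (0, \tfrac{\pi}{3(2k+1)})$.
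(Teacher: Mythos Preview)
Your proposal is correct and follows the same approach as the paper: define $\psi$ as the $\Delta_\eps$-harmonic extension of $\int_0^x\phi$ with zero Dirichlet data on $\partial_0 K$, solve explicitly in Mellin variables to obtain $\hat\psi(\lambda,v) = \frac{\sin(\eps\lambda v)}{\sin(\eps\lambda)}\frac{\hat\phi(\lambda-1)}{\lambda}$ (the paper simply drops your harmless bounded factor $(\cos\eps)^\lambda$), and read off \eqref{est-psi}--\eqref{psi-2} from the multiplier bounds on the sine/cosine ratios. Two small points of polish: the $\cos(\eps\lambda v)/\sin(\eps\lambda)$ ratio that arises from odd $\alpha_2$ carries an extra factor $\tfrac{1}{\eps\mu}$ (this is what the paper records in \eqref{using-2} and is exactly why the prefactor $\eps$ in $\widehat{\eps\psi_0}$ is needed), and the diffeomorphism claim requires an interior $L^\infty$ bound on $\psi_y$, which comes from \eqref{est-psi} rather than from the trace bound \eqref{psi-2}.
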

\begin{proof}
  We construct $\psi$ to be the solution of
  \begin{gather} \label{def-psi} 
    \left \{
      \begin{array}{ll}
        \Delta_\eps \psi \ = \ 0 \qquad\qquad&\text{in } K, \\
        \psi \ = \ \int_0^x \phi dx, &\text{on } \partial_1 K, \\
        \psi  \ = \ 0, &\text{on } \partial_0 K.
      \end{array}
    \right.
  \end{gather}
  As in the previous section, the argument is based on a decomposition the right
  hand side into a polynomial part and a homogeneous remainder. Since this
  decomposition proceeds analogously as in the proof of Proposition
  \ref{prp-p-linear} we only consider the case of homogeneous data. That is we
  assume that $\phi \in \XSoo{k}$. With the transformation \eqref{def-ts},
  \eqref{def-psi} takes the form $\partial_u^2 \psi + ({\textstyle \frac
    1\eps \partial_v})^2 \psi = 0$ for $(u,v) \in \R \times (0,1)$. Furthermore,
  $\psi(u,1) = \int^{u} \phi e^{\tilde u} d\tilde u =: H(u)$ and $\psi(u,0) =
  0$. Application of the Laplace transform in $u$ and since $\lambda \hat
  H(\lambda) = \hat \phi(\lambda-1)$, the solution can be explicitly calculated
  as (cf. \eqref{p-strip}-\eqref{q-formula})
  \begin{align} \label{dertak} %
    \hat \psi(\lambda, v) \ %
    = \ \frac{\sin(\eps (\lambda) v)}{\sin(\eps (\lambda))} \hat H(\lam) \ %
    = \ \frac 1{\lam} \frac{\sin(\eps (\lambda) v)}{\sin(\eps (\lambda))} \hat
    \phi(\lam-1).
  \end{align}
  When taking derivatives of \eqref{dertak} in $v$, additional factors of $\eps
  \lam$ are created; furthermore the multiplier has either the cosinus or sinus
  in the denominator. We have for $v \in (0,1)$,
  \begin{align} \label{using-2} %
    \left|\frac{\sin(\eps \lambda v)}{\sin(\eps \lambda)} \right| \ %
    \leq \ \left|\frac{\cos(\eps \lambda v)}{\sin(\eps \lambda)} \right| \ %
    \leq \ \frac C{\eps \mu} e^{\eps |\lambda| (v-1)} \ %
    \leq \ \frac C{\eps \mu} e^{\frac 12 \eps |\lambda| (v-1)}. %
  \end{align}
    Indeed, for $|\eps \lam| \leq 1$, we have $\mu = |\lam|$, $|\frac {\cos
      (\eps \lam)}{\sin(\eps \lam)}| \leq \frac C{|\eps \lam|} = \frac C{\eps
      \mu}$ and $|e^{|\eps \lam| (v-1)}| \geq e^{-1}$. For $|\eps \lam| \geq 1$,
    we have $\mu = \frac 1\eps$ (and hence $\frac 1{\eps\mu} = 1$) $|\cos(\eps
    \lam v)| \leq 4 e^{|\eps\lam|v}$ and $|\sin(\eps \lam)| \geq \frac 14
    e^{|\eps\lam|}$. This proves \eqref{using-2}. Now, we multiply
    \eqref{dertak} by $|\lam|^{3k+1} \mu^k$, apply the $L^2$ norm on the line
    $\Re \lam = 3\ell - \frac 12$ and use \eqref{using-2}. This yields the
    estimate and in particular uniqueness.  This yields \eqref{est-psi} for
    (note that $1 \leq |\lam|$ and $1 \leq \mu$ for the considered values of
    $\lam$). This concludes the proof of \eqref{est-psi}. The estimate of
    \eqref{psi-2} follows directly by multiplication of \eqref{dertak} with
    $|\lam|$ and evaluation at $\Re (\lam+1)$ (corresponding to one derivative
    in $x$).
\end{proof}

\begin{lemma} \label{lem-remainder}  %
  Let $\Psi$ be the coordinate transform from Lemma \ref{lem-psi}. Then
  $P : K^\phi \to \R$ is a solution of \eqref{p-K} if and only if $p = P
  \circ \Psi : K \to \R$ satisfies
  \begin{align} \label{pRpsi} %
    \left \{
      \begin{array}{ll}
        \Delta_\eps p \ = \ R(p,\phi)  \qquad&\text{ in } K, \\
        p \ = \ f_x  \qquad&\text{ on } \partial_1 K, \\
        p_y \ = \ 0  \qquad&\text{ on } \partial_0 K,
      \end{array} 
    \right.
  \end{align}
  where the operator $R(p,\phi)$ is given by (using the notation $\qq := (1 +
  \psi_y)^{-1}$)
  \begin{align} \label{def-R}
    \begin{aligned}
      R(p,\phi) \ = \ &- \qq_x \psi_x p_y - \qq \psi_{xx} p_y - 2 \qq \
      \psi_x p_{xy} + \qq \qq_y \psi_x^2 p_y \\
      &+ \qq^2 \psi_x \psi_{xy} p_y + \qq^2 \psi_x^2 p_{yy} + ({\textstyle
        \frac 1\eps})^{2} \qq \qq_y p_y + (\feps)^{2}
      (\qq^2 - 1) p_{yy}.
    \end{aligned}
  \end{align}
\end{lemma}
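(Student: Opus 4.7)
The proof is a bookkeeping exercise built on the chain rule plus a direct verification of boundary conditions. The plan is to write $\Psi(x,y) = (x, y+\psi(x,y)) =: (X,Y)$, set $\qq := 1/(1+\psi_y)$ (well-defined and smooth since $\psi_y$ is small by Lemma \ref{lem-psi}), and relate derivatives of $p$ and $P$ via the standard chain rule. Differentiating $p(x,y) = P(\Psi(x,y))$ once yields the first-order relations $p_y = (1+\psi_y) P_Y$ and $p_x = P_X + \psi_x P_Y$, which I would invert to $P_Y = \qq p_y$ and $P_X = p_x - \qq \psi_x p_y$. Differentiating once more produces
\begin{align*}
  p_{yy} \ &= \ (1+\psi_y)^2 P_{YY} + \psi_{yy} P_Y, \\
  p_{xy} \ &= \ (1+\psi_y) P_{XY} + (1+\psi_y)\psi_x P_{YY} + \psi_{xy} P_Y, \\
  p_{xx} \ &= \ P_{XX} + 2\psi_x P_{XY} + \psi_x^2 P_{YY} + \psi_{xx} P_Y,
\end{align*}
and I would solve successively for $P_{YY}$, $P_{XY}$, $P_{XX}$ in terms of derivatives of $p$ and $\psi$. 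The identities $\qq_x = -\qq^2 \psi_{xy}$ and $\qq_y = -\qq^2 \psi_{yy}$, together with $(1+\psi_y)^2 - 1 = (1-\qq^2)/\qq^2$, allow me to package every coefficient in the form appearing in \eqref{def-R}.

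Next, I would form the combination $P_{XX} + (\feps)^2 P_{YY}$, which vanishes since $P$ is $\Delta_\eps$-harmonic on $K^\phi$, and rearrange to isolate $\Delta_\eps p = p_{xx} + (\feps)^2 p_{yy}$ on one side. All remaining terms, once expressed in $p$ and $\psi$, assemble into precisely the operator $R(p,\phi)$ stated in \eqref{def-R}. For the boundary conditions, I use the construction of $\psi$ in Lemma \ref{lem-psi}: on $\partial_0 K = \{y=0\}$, $\psi = 0$ implies $\Psi(\partial_0 K) = \partial_0 K^\phi$, and since $P_Y$ vanishes there, $p_y = (1+\psi_y) P_Y = 0$ on $\partial_0 K$. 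On $\partial_1 K = \{y=x\}$, the boundary datum $\psi(x,x) = \int_0^x \phi$ enforces $\Psi(\partial_1 K) = \partial_1 K^\phi$ (the graph of $h^\phi = x + \int_0^x \phi$), so $p = P \circ \Psi = f_x$ on $\partial_1 K$.

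The reverse implication — if $p$ solves \eqref{pRpsi} on $K$ then $P := p \circ \Psi^{-1}$ solves the pressure problem on $K^\phi$ — follows by reading the same chain-rule identities backwards; this is legitimate because $\Psi$ is a $C^\infty$ diffeomorphism of $K$ onto $K^\phi$ by Lemma \ref{lem-psi}, with the boundary correspondence above. The main obstacle is purely algebraic: there are eight terms in $R(p,\phi)$ and several natural ways to group the intermediate coefficients (in $\qq$ versus $1+\psi_y$, before versus after substitution of $P_{YY}$ into $P_{XY}$ and then into $P_{XX}$), and care is needed to land on exactly the form \eqref{def-R} with the correct sign conventions. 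No estimates or analytical arguments are involved — all the mathematical content is contained in the chain rule and in the harmonicity of $P$.
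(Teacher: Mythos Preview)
Your proposal is correct and is essentially the same chain-rule computation as the paper's proof. The only organizational difference is that the paper first writes the inverse transform $\Psi^{-1}(\hat x,\hat y)=(\hat x,\hat y+\eta(\hat x,\hat y))$, derives $\partial_{\hat x}=\partial_x-\qq\psi_x\partial_y$ and $\partial_{\hat y}=\qq\partial_y$ directly from the inverse Jacobian, and then applies these operators twice to obtain $P_{\hat x\hat x}$ and $P_{\hat y\hat y}$ in terms of $p$; you instead differentiate $p=P\circ\Psi$ forward and invert the resulting linear system, which amounts to the same identities read in the opposite order.
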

\begin{proof}
  We can write the inverse coordinate transform $\Psi^{-1} : K^\phi \to K$ as
  $\Psi^{-1}(\hat x, \hat y) = (\hat x, \hat y + \eta(\hat x, \hat y))$ for some
  $\eta : K^\phi \to K$. In particular, $\hat y + \eta(\hat x, \hat y) +
  \psi(\hat x, \hat y + \eta(\hat x, \hat y)) = \hat y$.  By differentiating
  this equality in $\hat x$ and $\hat y$, we get $\psi_x + (1 + \psi_y)
  \eta_{\hat x} = 0$ and $(1 + \psi_y) (1 + \eta_{\hat y}) = 1$.  This implies
  $\frac {\partial x}{\partial \hat x} = 1$, $\frac{\partial x}{\partial \hat y}
  = 0$, $\frac{\partial y}{\partial x} = - \qq \psi_x$ and $\frac{\partial
    y}{\partial \hat y} = \qq$, in particular, $P_{\hat y} = 0 \Leftrightarrow
  P_y = 0$, justifying the boundary condition in \eqref{p-K}. Equation
  \eqref{def-R} follows from \eqref{p-K} together with
  \begin{align*}
    P_{\hat x \hat x} \ &= \ (\partial_x - \qq
    \psi_t \partial_y)(p_x - \qq \psi_x p_y) \ \\
    &= \ p_{xx} - \qq_x \psi_x p_y - \qq \psi_{xx} p_y - 2 \qq \ \psi_x p_{xy} +
    \qq \qq_y \ \psi_x^2 p_y + \qq^2 \psi_x \psi_{xy} p_y
    + \qq^2 \psi_x^2 p_{yy}, \\
    P_{\hat y \hat y} \ &= \ \qq (\qq \tilde p_y)_y \ = \ \qq^2 p_{yy} + \qq
    \qq_y p_y.
  \end{align*}
\end{proof}

\subsection{Estimates on the pressure} The main result of this section is:
\begin{proposition}[Shape dependence of $p$] \label{prp-p-nonlinear} %
  Let $k \in \N$ with $k \geq 1$, $\eps \in (0, \frac
  {\pi}{3(2k+1)})$ and $\phi \in \XS{k-1/2}$, $f \in \XS{k}$. Then
  there is a constant $\alp < 1$ such that if $\XN{\phi}{k-1/2} \leq
  \alp$, then there is a unique solution $P : K \to \R$ of
  \begin{align} \label{p-K} \left \{
      \begin{array}{ll}
        \Delta_\eps P = 0 \qquad &\text{ in } K^{\phi}, \\
        P = f_x \qquad &\text{ on } \partial_1 K^{\phi}, \\
        P_y = 0 \qquad &\text{ on } \partial_0 K^{\phi}.
      \end{array} 
    \right.
  \end{align}
  Furthermore, $p = P \circ \Psi \in \YS{k}$ satisfies
  \begin{align} \label{ppf} %
    \YN{p}{k} \ \leq \ C\XN{f}{k}.
  \end{align}
  Let $q$ be the solution of \eqref{p-wedge} with data $f$ and with right hand
  side $g = 0$. Furthermore, let $\tilde P$, $\tilde p$ and $\tilde q$ be the
  corresponding solutions with boundary condition $\tilde f$ instead of $f$ and
  let $w = p-q$ and $\tilde w = \tilde p - \tilde q$. Then
  \begin{align} \label{ppf-2} %
    \YN{w}{k} \ &\leq \ C \XN{\phi}{k-1/2} \XN{f}{k},  \notag \\
    \YN{p - \tilde p}{k} \ &\leq \ %
    C \XN{\phi - \tilde \phi}{k-1/2} (\XN{f}{k} + \XN{\tilde f}{k}) %
    + C \XN{f - \tilde f}{k},  \\
    \YN{w - \tilde w}{k} \ &\leq \ C \XN{\phi - \tilde\phi}{k-1/2} ( \XN{f}{k} +
    \XN{\tilde f}{k}) + C \XN{f - \tilde f}{k} (\XN{\phi}{k-1/2} +
    \XN{\tilde\phi}{k-1/2}). \notag
  \end{align}
\end{proposition}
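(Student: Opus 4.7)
The strategy is to combine the pull-back Lemma \ref{lem-remainder} with a Banach fixed-point argument on the wedge $K$, using the linear pressure estimate Proposition \ref{prp-p-linear} as the maximal regularity input. By Lemma \ref{lem-remainder}, $P$ solves \eqref{p-K} on $K^\phi$ if and only if $p:=P\circ\Psi$ solves \eqref{pRpsi} on $K$ with right-hand side $R(p,\phi)$ from \eqref{def-R}. I would define the map $T:\YS{k}\to\YS{k}$, $\tilde p \mapsto p$, where $p$ is the unique solution (from Proposition \ref{prp-p-linear}) of
\[
\Delta_\eps p = R(\tilde p,\phi)\ \text{in}\ K, \quad p = f_x\ \text{on}\ \partial_1 K, \quad p_y = 0\ \text{on}\ \partial_0 K.
\]
Proposition \ref{prp-p-linear} yields $\YN{T\tilde p}{k} \leq C(\ZN{R(\tilde p,\phi)}{k} + \XN{f}{k})$, so everything reduces to the bilinear estimate
\begin{equation}\tag{$\star$}
\ZN{R(\tilde p,\phi)}{k} \ \leq \ C\,\XN{\phi}{k-1/2}\,\YN{\tilde p}{k}
\end{equation}
together with its Lipschitz counterpart in both arguments.

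Granted $(\star)$, for $\alpha$ small enough $T$ becomes a contraction on the ball $\{\YN{p}{k}\leq 2C\XN{f}{k}\}$ in $\YS{k}$, and Banach's theorem produces a unique fixed point $p$ with $\YN{p}{k}\leq C\XN{f}{k}$, proving existence, uniqueness and \eqref{ppf}. For the first estimate in \eqref{ppf-2}, decompose $p = q + w$ where $q$ is the linear solution of \eqref{p-wedge} with $g=0$, so that $w$ satisfies $\Delta_\eps w = R(p,\phi)$ with homogeneous data; Proposition \ref{prp-p-linear} combined with $(\star)$ gives $\YN{w}{k} \leq C\ZN{R(p,\phi)}{k}\leq C\XN{\phi}{k-1/2}\XN{f}{k}$. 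The second and third estimates of \eqref{ppf-2} follow by the same splitting applied to $p-\tilde p$ and $w-\tilde w$: the right-hand side splits as $[R(p,\phi)-R(\tilde p,\phi)] + [R(\tilde p,\phi)-R(\tilde p,\tilde\phi)]$, and one applies Proposition \ref{prp-p-linear} together with the Lipschitz version of $(\star)$, absorbing the contribution $C\alpha\YN{p-\tilde p}{k}$ on the left-hand side. Lipschitz dependence of $\psi$ on $\phi$ itself follows from the linearity of \eqref{def-psi} combined with \eqref{est-psi}.

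\textbf{Main obstacle.} The core technical step is $(\star)$. Each summand of $R(\tilde p,\phi)$ in \eqref{def-R} has the schematic form $\mathfrak{a}(\psi)\,\mathfrak{b}(\tilde p)$, where $\mathfrak{a}(\psi)$ is a rational function of $\psi_x,\psi_y,\psi_{xx},\psi_{xy}$ and $\mathfrak{b}(\tilde p)$ is $p_y$, $p_{xy}$, or $p_{yy}$. The Mellin transform converts the product into a convolution in $\lambda$ (in the $u$ variable), and the Young-type convolution estimate already used in the proof of Lemma \ref{lem-algebra} applies after splitting high/low frequency and distributing the $\lambda^{3\ell-2}$ derivative factor via the binomial formula. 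For the ``harmless'' terms (those without $(\feps)^2$ prefactor) the estimate \eqref{est-psi}, which carries an extra factor $\mu^{\ell+1}$ (one power of $\mu$ more than the target norm $\ZS{k}$ requires), absorbs the prefactor schematically $\eps$ and leaves $\XN{\phi}{k-1/2}$ of margin (half an order is gained because each $\psi$-factor in $R$ carries at least one derivative). The dangerous terms $(\feps)^2\qq\qq_y p_y$ and $(\feps)^2(\qq^2-1)p_{yy}$ require the full strength of \eqref{est-psi}: each $\partial_y$ landing on $\psi$ converts in the transformed variables to a factor $\feps\partial_v$, hence brings out a $\feps$; this together with the extra factor $\mu$ in \eqref{est-psi} (and the bound $\mu\leq\feps$) cancels the prefactor $(\feps)^2$ uniformly in $\eps$, because $(\feps)^2 \cdot \mu^{-1}\cdot\eps \leq 1$ on the relevant frequency region.

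An additional subtlety is that the $\YS{k}$ and $\ZS{k}$ norms involve a supremum over $v\in(0,1)$ which does not commute with convolution in $\lambda$; this is handled by pushing $\sup_v$ inside pointwise (using that the exponential weight $e^{\frac12\eps|\lambda|(1-v)}$ appearing in both factors is multiplicative under product) before applying the convolution inequality, exactly as in the algebra proof of Lemma \ref{lem-algebra}. Finally, one must verify $(\star)$ for every intermediate homogeneous order $\ell\in[\tfrac23,k]$ rather than only the endpoints, since the remark after \eqref{Y-norm} warns that complex interpolation is unavailable; but the convolution argument produces each order directly from the corresponding orders on $\tilde p$ and $\phi$, so this merely means carrying the parameter $\ell$ through the bookkeeping.
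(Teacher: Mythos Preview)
Your approach is correct and essentially identical to the paper's: the paper also pulls back via Lemma~\ref{lem-remainder} and runs an iteration/fixed-point on $K$ using Proposition~\ref{prp-p-linear}, with your estimate $(\star)$ isolated as a separate proposition (Proposition~\ref{prp-RR}, equation~\eqref{RR-est}) and proved by exactly the Mellin-convolution, high/low frequency split, and binomial distribution you describe. The only organizational difference is that the paper absorbs the dangerous $(\feps)^2$ prefactors on the $p$-side rather than the $\psi$-side---observing first (equation~\eqref{nochnest}) that $\ZN{(\feps)^2 p_{yy}}{k}+\ZN{\feps p_{xy}}{k}+\ZN{\feps p_y}{k}\leq C\YN{p}{k}$ and then reducing all eight terms of $R$ to two model product estimates \eqref{the-est-1}--\eqref{the-est-2} with $\rho\in\{\eps\psi_x,\psi_y\}$---which is a bit cleaner than your dimensional count but amounts to the same thing.
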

Before we address the proof of Proposition \ref{prp-p-nonlinear}, we give an
estimate of the nonlinear term $R(p,\phi)$ defined in \eqref{def-R}:
\begin{proposition} \label{prp-RR} %
  Let $k \in \N$, $k \geq 1$, $\eps \in (0, \frac
  {\pi}{3(2k+1)})$. Let $\phi, \tilde \phi \in \XS{k-1/2}$ with
  $\XN{\phi}{k-1/2} \leq c_k$, $\XN{\tilde \phi}{k-1/2} \leq c_k$ for some
  sufficiently small universal constant $c_k$. Let $p \in \YS{k}$ be the
  solution of \eqref{p-K} on $K^\phi$ with boundary data $f$, let $\tilde p \in
  \YS{k}$ be the corresponding solution on $K^{\tilde \phi}$ with boundary data
  $\tilde f$. Then
  \begin{align} \label{einfach} %
    \begin{aligned}
      \ZN{R(p, \phi) - R(\tilde p, \tilde \phi)}{k} \ %
      &\leq \ C \YN{p - \tilde p}{k}(\XN{\phi}{k-1/2} +
      \XN{\tilde \phi}{k-1/2})  \\
      &\qquad\qquad+ C \XN{\phi - \tilde \phi}{k-1/2} (\YN{p}{k} + \YN{\tilde
        p}{k}),
    \end{aligned}
  \end{align}
  where $R(p,\phi)$ is defined in \eqref{def-R}.
\end{proposition}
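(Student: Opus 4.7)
\textbf{Proof plan for Proposition \ref{prp-RR}.} The plan is to view $R(p,\phi)$ as a finite sum of multilinear expressions in $\psi$ and its derivatives (coming from $\phi$ via Lemma \ref{lem-psi}), in $p$ and its derivatives, and in the scalar factor $\qq = (1+\psi_y)^{-1}$. First, using smallness of $\XN{\phi}{k-1/2}$ (hence of $\XN{\psi_{y|\Gamma}}{k-1/2}$ by \eqref{psi-2}) together with the embedding $\XS{k-1/2}\hookrightarrow L^\infty$ from \eqref{X-sob}, I would expand
\[
\qq \;=\; \sum_{m\ge 0}(-\psi_y)^m, \qquad \qq^2-1 \;=\; -\bigl(2+\psi_y\bigr)\psi_y\,\qq^2, \qquad \qq_y = -\psi_{yy}\,\qq^2,
\]
and similarly for $\qq_x$, reducing $R(p,\phi)$ to an absolutely convergent sum of truly polynomial multilinear terms of schematic form $\mathcal T_m = \psi_y^m \cdot D^a\psi \cdot D^a\psi \cdot D^bp$ with $|a|\le 2$ and $|b|\le 2$. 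The difference $R(p,\phi)-R(\tilde p,\tilde\phi)$ is then handled by the usual telescoping identity which expresses it, term by term, as a sum where exactly one factor is replaced by a difference ($p-\tilde p$, $\psi-\tilde\psi$, or $\psi_y^j-\tilde\psi_y^j$), producing the symmetric structure on the right-hand side of \eqref{einfach}.

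Next I would reduce everything to a single product estimate: for $a\in\XS{k-1/2}$ and $b\in\YS{k}$ appearing with the right numbers of derivatives, products $D^\alpha a\cdot D^\beta b$ lie in $\ZS{k}$ with the bound
\[
\ZN{D^\alpha a\cdot D^\beta b}{k} \;\le\; C\,\XN{a}{k-1/2}\,\YN{b}{k},
\]
provided the total order matches $\ZS{k}$, which by \eqref{Z-hom} asks for $|\alpha|+|\beta|=3\ell-2$ type bounds. The proof mirrors the convolution-in-Mellin argument of Lemma \ref{lem-algebra}: decompose each factor into its Taylor-polynomial part (cut off) and its homogeneous remainder, take the Laplace transform in the radial variable $u$ (keeping $v$ as a parameter), apply the binomial identity to split the $\lambda$-weights and use Young/Cauchy--Schwarz as in \eqref{often-1}; the $L^\infty_v$-structure of $\YS{k}$ and $\ZS{k}$ is preserved because only one factor at a time carries the supremum in $v$, while the other is a function of $x$ alone and can be bounded pointwise in $v$ via \eqref{X-sob}. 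The polynomial$\times$polynomial and polynomial$\times$remainder cross terms are immediate because the cut-offs localize them to $r\in[\tfrac18,\tfrac14]$.

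For the two terms carrying an explicit $(\feps)^2$, I would exploit the cancellations $(\qq^2-1)\sim\psi_y$ and $\qq_y\sim\psi_{yy}$: after absorbing the $(\feps)^2$ into one $\eps\psi_y$ and one $\eps\psi_{yy}$ (equivalently one $\eps\partial_y$ acting on $\eps\psi$), Lemma \ref{lem-psi} provides the $\eps$-uniform control via the bound \eqref{est-psi} in which $\eps\psi_0$ is the natural object. The same Mellin-convolution lemma then yields the desired bound without any loss in $\eps$. The combinatorics of combining different derivative distributions across the terms is bookkeeping that follows the pattern already used in Lemma \ref{lem-algebra}.

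\textbf{Main obstacle.} The hard part is the product estimate landing in $\ZS{k}$ uniformly in $\eps$. Two features make it delicate: the supremum in the angular variable $v$ destroys analyticity, so the Paley--Wiener style contour shifts used for $\XS{k}$ cannot be invoked directly; and the weighted factor $e^{\frac12\eps|\lambda|(1-v)}$ must be split between the two factors of a convolution so that both halves can be reconstituted as $\YS{k}$- or $\ZS{k}$-norms after the integration. Both issues are resolved by handling the angular dependence pointwise (taking the supremum in $v$ before the Mellin convolution) and by distributing the exponential weight symmetrically, exactly as in the linear estimates of Lemmas \ref{lem-w-hom} and \ref{lem-pressure-trace}. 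Once this product estimate is in hand, Proposition \ref{prp-RR} follows by applying it to each term in the expanded and telescoped expression for $R(p,\phi)-R(\tilde p,\tilde\phi)$.
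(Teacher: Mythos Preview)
Your plan matches the paper's proof: multilinear decomposition of $R$, telescoping for the difference, and a $\ZS{k}$-product estimate proved by Mellin convolution with the high/low frequency split of Lemma~\ref{lem-X-laplace} and the exponential weight distributed via $e^{\frac12\eps|\lambda|(1-v)}\le e^{\frac12\eps|\lambda-\eta|(1-v)}e^{\frac12\eps|\eta|(1-v)}$. One correction to your middle paragraph: the factor coming from $\phi$ is $\psi$ and its derivatives, which depend on $(x,y)$, not on $x$ alone, so \emph{both} factors in the convolution must carry the $\sup_v$ and the exponential weight (the $\psi$-side being controlled via \eqref{est-psi}); this is exactly the resolution you describe in your ``Main obstacle'' paragraph, and the paper implements it as the two-sided $\sup_v$ Young inequality \eqref{often-2}.
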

\begin{proof}
  The proof uses some ideas of the proof of the algebra property in Lemma
  \ref{lem-algebra}.
%\cite[Proposition 1.2(4)]{KnuepferMasmoudi-2012a}
  There are two differences: We also need to take a supremum in the angular
  variable $v$. The factors in $R$ are controlled by different norms.
  Furthermore, the norm $g$ controls a discrete set of homogeneous norms
  (cf. \eqref{Z-norm}). We will show that
  \begin{align} \label{RR-est} %
    \ZN{R (p,\phi) }{k} \ \leq \ C \XN{\phi}{k-1/2} \YN{p}{k}
% \ \leq \  C \XN{\phi}{k-1/2} \XN{f}{k},
  \end{align}
  the proof of \eqref{einfach} follows by an analogous argument, using the
  multilinear structure of $R$.  We use the representation
  \eqref{def-R}. Recall the definition \eqref{def-psi} of the coordinate
  transform $\psi$.  We claim that \eqref{RR-est} follows by iterative
  application of
  \begin{align}\label{the-est-1} %
    \ZN{\rho g}{k} + \ZN{(1-\gamma) g}{k} %
    &\leq \ C \XN{\phi}{k-1/2} \ZN{g}{k}, \\
    \ZN{\rho_x w}{k} + \ZN{(\feps \rho_y) w}{k} \ %
    &\leq \ C \XN{\phi}{k-1/2} \big( \ZN{w_x}{k} + \ZN{\feps w_y}{k} \big), \label{the-est-2}
  \end{align}
  where $\rho$ is either one of the terms $\eps \psi_x$ or $\psi_y$ and where we
  recall that $\gamma = (1 + \psi_y)^{-1}$.

  \medskip

  Assuming that \eqref{the-est-1}--\eqref{the-est-2} hold, the proof of
  \eqref{RR-est} is easy:  Recall that with the definitions of the norms
  \eqref{Y-hom}, \eqref{Z-norm} and by \eqref{ppf} we have
  \begin{align} \label{nochnest} %
    \ZN{p_x}{k} + \ZN{\feps p_y}{k} + \ZN{p_{xx}}{k} + \ZN{\feps p_{xy}}{k} +
    \ZN{(\feps)^2 p_{yy}}{k} \ \leq \ C \YN{p}{k}.
  \end{align}
  We show how the estimate of \eqref{RR-est} proceeds for the term $\gam_x
  \psi_x p_y$ (the first term on the right hand side of \eqref{def-R}): In view
  of $\gam_{x} = - \gam^2 \psi_{xy}$, we have
  \begin{align*}
    \ZN{\gam_x \psi_x p_y }{k} \ %
    = \ \ZN{\gam^2 (\eps \psi_x) \psi_{xy} (\feps p_y)}{k} \ %
    \lupref{the-est-1}\leq \ C \ZN{\psi_{xy} (\feps p_y) }{k} %
    \lupupref{the-est-2}{nochnest}\leq \ C \XN{\phi}{k-1/2} \YN{p}. %
  \end{align*}
  Indeed, it can be easily checked that the estimate of the other terms in
  \eqref{def-R} proceeds analogously.  In order to conclude the proof of the
  Proposition, it hence remains to give the argument for
  \eqref{the-est-1}--\eqref{the-est-2}.

  \medskip

  {\it Proof of \eqref{the-est-1}:} In view of the Taylor expansion $\gamma - 1
  = \psi_y - \psi_y^2 + \ldots$, the estimate for the term $(1-\gamma) g$ in
  \eqref{the-est-1} hence follows by the corresponding estimate for the term
  $\psi_y g$ together with the estimate \eqref{est-psi}, $\XN{\phi}{k-1/2} \leq
  c_k$ and for $c_k$ sufficiently small. In order to see \eqref{the-est-1}, it
  hence remains to show
  \begin{align} \label{essence} %
    \ZN{\rho g}{k} \ \leq \ C \XN{\phi}{k-1/2} \ZN{g}{k}.
  \end{align}
  Let $\PP_\psi$ and $\PP_g$ be the Taylor polynomials at $(x,y)=(0,0)$ of
  $\psi$ and $g$ (of order $3k-1$ and $3k-4$, respectively).  We decompose $\psi
  = \psi_1 + \psi_0$ and $g = g_1 + g_0$, where $\psi_1 := \PP_\psi \zeta$ and
  $g_1 := \PP_g \zeta$ and where the radial cut-off function $\zeta =
  \zeta(|x,y)|) \in \cciL{[0,\infty)}$ satisfies $\zeta = 1$ in $[0,\frac 18]$
  and $\zeta = 0$ in $[\frac 14,\infty]$.  The terms $\rho_0$ and $\rho_1$ are
  defined correspondingly. Clearly, it is enough to show the corresponding
  estimate to \eqref{essence} for the products $\rho_1 g_1$, $\rho_1 g_0$,
  $\rho_0 g_1$ and $\rho_0 g_0$. In the following, we will give the estimate for
  $\rho_0 g_0$; the estimate of the other terms (related to finite dimensional
  Taylor expansion) proceeds analogously as in the proof of the algebra property
  of $\XS{k}$ in Lemma \ref{lem-algebra}. %\cite{KnuepferMasmoudi-2012a}.
  Furthermore, for simplicity of notation, we give the proof with $\Lam$
  replaced by $|\lam|$ in \eqref{Z-hom}, i.e. when only radial derivatives
  appear . The argument in the case of angular derivatives ${\frac
    1\eps} \partial_v$ proceeds by distributing the derivatives on the two
  factors using Leibniz' rule.  We will use the notation $\EEP{|\lam|} =
  \EP{\lam}$, in particular by the triangle inequality we have $\EEP{|\lam|}
  \leq \EEP{|\lam-\eta|}\EEP{|\eta|}$.

    \medskip

    Analogously to \eqref{def-fpm}, we decompose the functions $\psi_0$ and
    $g_0$ into their low and high frequency part, i.e. $\psi_0 = \psi_+ +
    \psi_-$ (with $\rho_\pm$ defined correspondingly) and $g_0 = g_+ + g_-$. In
    particular, as in \eqref{est-fpm}, we have
    \begin{align} \label{rp-freq} %
      |\lambda|^k |\widehat{g_+}| + \left(\feps \right)^k
      |\widehat{g_-}| \ \leq \ C \mu^k |\widehat {g_0}|, && %
      |\lambda|^k |\widehat{\rho_+}| + \left(\feps \right)^k
      |\widehat{\rho_-}| \ \leq \ C \mu^k |\widehat{\rho_0}|.
    \end{align}
    We need to estimate the terms $g_+ \rho_+$, $g_+ \rho_-$, $g_- \rho_+$ and
    $g_- \rho_-$. We show the estimate for the high frequency/high frequency
    product $g_- \rho_-$. Indeed, the estimate for the other two terms proceeds
    similarly (see also the algebra proof in \cite{KnuepferMasmoudi-2012a}).

    \medskip

    In view of \eqref{Z-hom} and \eqref{rp-freq}, we need to show for all $\ell
    \in [\frac 23,k]$,
    \begin{align}
      \ZMMoo{\rho_- g_-}{\ell} \ %
      &= \ C \nltL{ \sup_{v \in (0,1)} \big| \int_{\Re \eta = \gamma}
        \EEP{|\lam|} (\feps)^\ell \lambda^{3\ell - 2}
        \widehat{\rho_-}(\lambda-\eta) \widehat{g_-}(\eta) \DI{\eta}\big|}{\Re
        \lambda = 3\ell- \frac 72} \notag \\ %
      &\leq \ C \XN{\phi}{k-1/2} \ZN{g}{k}, \label{lhs-of}
  \end{align}
  for any $\gam \in \R$ of our choice such the above integral is defined. Let
  $\kappa \in [0,1)$ be the smallest number such that $N := 3\ell-2 + \kappa \in
  \N_0$. We show the corresponding stronger estimate to \eqref{lhs-of} where the
  $\lam^{3\ell - 2}$ is replaced by $\lam^N$. The estimate is stronger since
  $|\lam| \geq \frac 12$ in the line of integration in \eqref{lhs-of}. The
  advantage is that the binomial formula $\lambda^N = \sum_{j=0}^N \ C_{Nj}
  (\lambda-\eta)^j \eta^{N-j}$ with $N = 3\ell - 2 \in \N_0$ can be applied. We
  have
  \begin{align*} %\label{da-int-pq} %
    &\int_{\Re \lambda = 3\ell- \frac 72} \SP \left| \int_{\Re \eta = \gamma}
      \EEP{|\lam|} \lambda^{3\ell - 2+\kappa}
      \widehat{\rho_-}(\lambda-\eta)
      \widehat{g_-}(\eta) \DI{\eta}  \right|^2 \DI{\lambda}\\
    &= \int_{\Re \lambda = 3\ell- \frac 72} \SP \left| \sum_{i = 0}^{3\ell - 2}
      C_{Nj} \int_{\Re \eta = \gamma} \EEP{|\lam|} (\lambda-\eta)^{3\ell-2+\kappa-i}
      \hat \rho_-(\lambda-\eta) \eta^{i} \widehat{g_-}(\eta) \DI{\eta} \right|^2
    \DI{\lambda}.
    \end{align*}
    Note that the above inner integrand is analytic in $\eta$ and hence does not
    depend on the value of $\gamma$ as long as the integral is well-defined;
    hence we may choose $\gam$ freely as a function of $i$.  In turn, since
    furthermore $\EEP{|\lam|} \leq \EEP{|\lam-\eta|}\EEP{|\eta|}$, it is enough
    to estimate terms of the form
    \begin{align} \label{k-weg} %
      \hspace{6ex} & \hspace{-6ex} %
      \int_{\Re \lambda = 3\ell- \frac 72} \SP \left| \int_{\Re \eta = \gamma}
        \EEP{|\lam-\eta|} |\lambda-\eta|^{3\ell+\kappa-2-i}
        |\widehat{\rho_-}(\lambda-\eta)| \EEP{|\eta|} |\eta|^{i}
        |\widehat{g_-}(\eta)| \DI{\eta} \right|^2 \DI{\lambda}
    \end{align}
    for $\ell \in [\frac 23, k]$, all integers $i \in [0, 3\ell - 2 +\kappa]$
    and with our choice of $\gam \in \R$. We next apply the following variant of
    \eqref{often-1} which says that for all $\tilde \delta > \frac 12$, we have
    \begin{align} \label{often-2} %
      \begin{aligned}
        \nltL{\sup_{v \in (0,1)} |\hat F * \hat G|}{\Re \lam = \bet}\ %
        &\leq \ C_\delta \nltL{\sup_{v \in (0,1)} (1+|\lam|^{\tilde
            \delta})|\hat F|}{\Re \lam = \bet_1} \nltL{\sup_{v \in (0,1)} |\hat
          G|}{\Re \lam = \bet_2} \hspace{-1.9ex}
      \end{aligned}
    \end{align}
    if $\bet_1 + \bet_2 = \bet$ and as long as all integrals are
    well-defined. We introduce the short notation $\cnltL{\hat \phi}{\bet} =
    \nltL{\sup_{v \in (0,1)} \EEP{|\lam|} |\hat \phi|}{\Re \lam = \bet}$.  In
    view of \eqref{k-weg} and \eqref{often-2}, for the proof of \eqref{lhs-of}
    it suffices to show for all $\ell \in [\frac 23,k]$ and for all integers $i
    \in [0,3\ell-2+\kappa]$,
    \begin{align} \label{diesda-old} %
      % \min \Big \{ 
      \cnltL{(1+|\lam|^{3\ell+ \kappa-2-i +\tilde \delta})
        |\widehat{\rho_-}|}{\bet_1} \cnltL{ |\lam|^{i} \widehat{g_-}}{\bet_2} \
      \leq \ \XN{\phi}{k-1/2} \ZN{g}{k},
    \end{align}
    where we can arbitrarily choose $\tilde \delta > \frac 12$ and $\bet_1,
    \bet_2$ with $\bet_1 + \bet_2 = 3\ell - \frac 72$.  With the notation
    $\delta = \tilde \delta + (\kappa - 1) < \tilde \delta$, it is equivalently
    enough to show .
    \begin{align} \label{diesda} %
      % \min \Big \{ 
      \cnltL{(1+|\lam|^{3\ell-1-i +\delta}) |\widehat{\rho_-}|}{\bet_1}
      \cnltL{ |\lam|^{i} \widehat{g_-}}{\bet_2} \ \leq \ \XN{\phi}{k-1/2}
      \ZN{g}{k},
    \end{align}
    where we can arbitrarily choose $\delta \geq \frac 12$ and $\bet_1, \bet_2$
    with $\bet_1 + \bet_2 = 3\ell - \frac 72$. Both $\bet_1$, $\bet_2$ as well
    as $\delta$ are allowed to depend on $\ell$ and $i$. In fact, in the sequel
    we will always choose $\delta = \frac 12$.

    \medskip

    Recall that either $\rho_0 = \eps \psi_{0x}$ or $\rho_0 = \psi_{0y}$ and
    hence either $|\widehat{\rho_0}(\lam)| = |\lam \widehat{\psi_0}(\lam+1)|$ or
    $|\widehat{\rho_0}(\lam)| = |\partial_v \widehat{\psi_0}(\lam+1)|$. By
    \eqref{est-psi} and by the same argument as in the proof of Lemma
    \ref{lem-real}, we hence have
   \begin{align} \label{est-psi-2} %
     \cnltL{(1+ |\lam|^{3\ell+1} \mu^{\ell+1})
       |\widehat{\rho_0}|}{3\ell-\frac 12} \ \leq \ C
     \XN{\phi}{k-1/2}, && %
     \text{$\forall \ell \in [0,k-\tfrac 12]$.}
  \end{align}
    By \eqref{est-psi-2} and \eqref{rp-freq}, we have
    \begin{align} \label{control-} %
      \cnltL{(1 + (\feps)^{\ell+1}|\lam|^{3\ell+1}) |\widehat{\rho_-}|}
      {3\ell-\frac 12} \ %
      \leq \ C \XN{\phi}{k-1/2} && %
      \text{$\forall \ell \in [0, k-\tfrac 12]$.}
     \end{align}
     Furthermore, in view of \eqref{Z-hom} and by \eqref{rp-freq}, we also have
    \begin{align} \label{g-control} %
      \cnltL{(\feps)^\ell|\lam|^{3\ell-2}
        |\widehat{g_-}|}{3\ell-\frac 72} \ %
      \leq \ C \ZNP{g}{k}, && %
      \text{$\forall \ell \in [\tfrac 23, k]$}.
    \end{align}
    We prove \eqref{diesda} for the three extreme cases, i.e. the corners of the
    triangle in $(\ell, i)$ where $\ell \in [\frac 23, k]$ and $i \in
    [0,3\ell-2] \}$, the estimate of the other terms follows by 'interpolation'
    of these estimates:

    \medskip
    
    \noindent(a1) $\ell = \frac 23$, $i=0$ (and hence $\bet = 3\ell-\frac 72 =
    -\frac 32$): With the choice $\bet_1 = 3 \cdot \frac 16 - \frac 12 = 0$,
    $\bet_2 = 3 \cdot \frac 23 - \frac 72 = -\frac 32$ and $\delta = \frac 12$,
    we need to estimate
    \begin{align*} %
      \cnltL{(1 + |\lambda|^{\frac 32})|\widehat{\rho_-}|}{0} %
      \cnltL{(\feps)^{\frac 23} |\widehat{g_-}|}{-\frac 32} \ \leq \ C
      \XN{\phi}{k-1/2} \ZN{g}{k}.
    \end{align*}
    (b1) $\ell = k$, $i = 0$: With the choice $\bet_1 = 3(k-\frac 12) - \frac 12
    = 3k-2$, $\bet_2 = 3 \cdot \frac 23 - \frac 72 = -\frac 32$ and $\delta =
    \frac 12$, we need to estimate
   \begin{align*} %
     &\cnltL{(1 + |\lambda|^{3k-\frac 12} (\feps)^{k - \frac 23})
       |\widehat{\rho_-}|}{3k-2} \ %
     \cnltL{(\feps)^{\frac 23} |\widehat{g_-}|}{-\frac 32} \ %
     \leq \ C \XN{\phi}{k-1/2} \ZN{g}{k}.
   \end{align*}
   (c1) $\ell = k$, $i = 3k-2$: With the choice $\bet_1 = 3 \cdot \frac 16 -
   \frac 12 = 0$, $\bet_2 = 3k-\frac 72$ and $\delta = \frac 12$, we need to
   estimate
   \begin{align*} %
     \cnltL{(1 + |\lam|^{\frac 32}) |\widehat{\rho_-}|}{0} %
     \cnltL{|\lam|^{3k-2} (\feps)^k |\widehat{g_-}| }{3k-\frac 72} \ %
     \leq \ C \XN{\phi}{k-1/2} \ZN{g}{k}.
   \end{align*}
   Now, using \eqref{control-} and \eqref{g-control}, it can be easily checked
   that the above estimates (a1), (b1) and (c1) hold true for $k \geq 1$. This
   concludes the proof of \eqref{essence} and hence of \eqref{the-est-1}.

   \medskip

   {\it Proof of \eqref{the-est-2}:} The proof of \eqref{the-est-2} proceeds
   similarly to the proof of \eqref{the-est-1}. As before, we show the estimate
   for the crucial high frequency/high frequency case $\rho_{-,x} w_-$.  With the
   same arguments as before, we need to show (correspondingly to \eqref{diesda}):
   \begin{align} \label{diesda-2} %
     \hspace{6ex} & \hspace{-6ex} %
     (\feps)^k \cnltL{|\lam|^{3\ell-1-i+\kappa} |\widehat{\rho_-}|}{\bet_1} %
     \cnltL{(1 + |\lam|^{i+\delta})|\widehat{w_-}|}{\bet_2} \ %
     \notag\\
     &\leq \ C \XN{\phi}{k-1/2} \big( \ZN{w_x}{k} + \ZN{\feps w_y}{k} \big) \ %
     =: \ \mathcal R,
    \end{align}
    (we have put the $\delta$ on the second factor).  In the above inequality,
    we can arbitrarily choose $\bet_1, \bet_2$, $\delta$ as long as $\bet_1 +
    \bet_2 = 3\ell - \frac 52$ and $\delta \geq \frac 12$. Both $\bet_1$,
    $\bet_2$ as well as $\delta$ may depend on $\ell$, $i$.  We note that by
    \eqref{Z-hom}, we have
    \begin{align} \label{w-control} %
      \cnltL{(\feps)^\ell|\lam|^{3\ell-1} |\widehat{w_-}|}{3\ell-\frac 52} \ %
      \leq \ C \big(\ZNP{w_x}{k} + \ZNP{\feps w_y}{k} \big), && %
      \text{$\forall \ell \in [\tfrac 12, k]$}.
    \end{align}
    We prove \eqref{diesda-2} in the case when the maximum number of derivatives
    fall onto $\rho$, i.e. $\ell = k$ and $i = 0$ (in particular, $\kappa = 0$
    where we recall that $\kappa$ is defined as the smallest nonnegative integer
    such that $3\ell - 2 \in \N$). With the choice $\bet_1 = 3(k-\frac 12) -
    \frac 12 = 3k-2$, $\bet_2 = 3 \cdot 1 - \frac 52 = \frac 12$ and $\delta =
    1$, we hence need to estimate
   \begin{align*} %
     &\cnltL{(\feps)^{k} |\lambda|^{3k-1} |\widehat{\rho_-}|}{3k-2} \ %
     \cnltL{(1+|\lam|) (\feps)^{\frac 23} |\widehat{g_-}|}{\frac 12} \ \leq \ C
     \mathcal R.
   \end{align*}
   This estimate holds true as can easily be checked using \eqref{control-} and
   \eqref{w-control}. The estimate of the terms $\rho_{-,x} w_-$, $\rho_{-,x}
   w_-$ and $\rho_{-,x} w_-$ proceeds similarly. This concludes the estimate of
   \eqref{the-est-2} and hence of the proposition.
\end{proof}
% We turn to the proof of Proposition \ref{prp-p-nonlinear}:
\begin{proof}[Proof of Proposition \ref{prp-p-nonlinear}]
  We first show the existence of a solution $P$ of \eqref{p-K} on $K^\phi$. By
  Lemma \ref{lem-remainder}, for this we need to find a solution $p$ of
  \eqref{pRpsi}.  We will solve \eqref{pRpsi} using an iterative argument.  We
  set $p_0 := 0$ and iteratively define $p_{i+1}$ to be the solution of
  \eqref{p-wedge} with right hand side $g = R(p_{i-1}, \phi)$ and boundary
  data $f_x$.  By \eqref{est-p} and \eqref{einfach}, we get
  \begin{align*}
    \YN{p_{i+1} - p_i}{k} \ %
    &\upref{est-p}\leq \ C \ZN{R(p_i, \phi) - R(p_{i-1}, \phi)}{k} \ %
    \lupref{einfach}\leq \ C \YN{\phi}{k-1/2} \YN{p_{i+1} - p_i}{k}  \\
    &\leq \ \ C \alp \YN{p_{i+1} - p_i}{k} \ %
    \leq \ \ \tfrac 12 \YN{p_{i+1} - p_i}{k}
  \end{align*}
  for $\alp$ sufficiently small, $\{ p_i \}_{i \in \N}$ is a Cauchy sequence
  and converges to a solution $p$ of \eqref{pRpsi}. By \eqref{est-p}, $p$
  satisfies \eqref{ppf}. The estimates \eqref{ppf-2} now follow from the
  representation \eqref{pRpsi} of the solution together with the estimates
  \eqref{einfach} and \eqref{RR-est}.
\end{proof}
We also have the nonlinear version of the trace estimate in Lemma
\ref{lem-pressure-trace}:
\begin{lemma} \label{lem-trace-nonlinear} %
  Suppose that the assumptions of Proposition \ref{prp-p-nonlinear} are
  satisfied (in particular, $k \geq 1$). Then with the notation of Proposition
  \ref{prp-p-nonlinear}, we have
  \begin{align} \label{nl-trace-1} %
    \XN{\partial_x(p_x)_{|\Gamma}}{k-1} + (\feps)^2
    \XN{\partial_x(p_y)_{|\Gamma}}{k-1} \ \leq \ C \XN{f}{k}.
  \end{align}
\end{lemma}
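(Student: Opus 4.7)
The plan is to reduce the nonlinear trace estimate to the linear one (Lemma \ref{lem-pressure-trace}) by decomposing the pressure into a ``homogeneous'' piece and an ``inhomogeneous'' piece generated by the remainder from Lemma \ref{lem-remainder}. Since by Lemma \ref{lem-remainder} the pulled-back pressure $p$ solves \eqref{pRpsi}, i.e.\ $\Delta_\eps p = R(p,\phi)$ in $K$ with $p_{|\partial_1 K} = f_x$ and $p_y{}_{|\partial_0 K} = 0$, I would write $p = q + w$, where $q$ solves \eqref{p-wedge} with boundary datum $f$ and right-hand side $g = 0$, and $w$ solves \eqref{p-wedge} with vanishing boundary datum and right-hand side $R(p,\phi)$. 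Then I would apply Lemma \ref{lem-pressure-trace} separately to each summand, at one index lower than in the target statement.

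For the homogeneous part, Lemma \ref{lem-pressure-trace} applied at level $k-1$ (with $g = 0$) immediately yields
\begin{equation*}
\XN{\partial_x(q_x)_{|\Gamma}}{k-1} + (\tfrac 1\eps)^2 \XN{\partial_x(q_y)_{|\Gamma}}{k-1} \ \leq \ C \XN{f}{k}.
\end{equation*}
For the remainder $w$, the same lemma applied with $f = 0$ and $g = R(p,\phi)$ gives
\begin{equation*}
\XN{\partial_x(w_x)_{|\Gamma}}{k-1} + (\tfrac 1\eps)^2 \XN{\partial_x(w_y)_{|\Gamma}}{k-1} \ \leq \ C \ZN{R(p,\phi)}{k}.
\end{equation*}
I would then invoke Proposition \ref{prp-RR}, specialized to $\tilde p = 0$ and $\tilde\phi = 0$ (equivalently, the estimate \eqref{RR-est}), to conclude
\begin{equation*}
\ZN{R(p,\phi)}{k} \ \leq \ C \XN{\phi}{k-1/2} \YN{p}{k},
\end{equation*}
and finally bound $\YN{p}{k} \leq C \XN{f}{k}$ by \eqref{ppf}. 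Since the smallness condition in Proposition \ref{prp-p-nonlinear} gives $\XN{\phi}{k-1/2} \leq \alpha \leq 1$, combining the two trace bounds yields the desired inequality \eqref{nl-trace-1}.

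The main issue I expect is one of bookkeeping rather than substance: Lemma \ref{lem-pressure-trace} is stated under the assumption $k \geq 1$, whereas for $k = 1$ here I need to invoke it at level $k-1 = 0$. The proof of Lemma \ref{lem-pressure-trace} is based on the explicit Mellin representations \eqref{q-formula}, \eqref{2-der} and on bounding a finite Taylor polynomial, and inspection shows that it extends verbatim to $k \geq 0$ provided $f \in \XS{k+1}$ and $g \in \ZS{k+1}$, which is precisely the regularity available to us here. No further work is required once this is noted, and the full proof fits into a few lines once the decomposition $p = q + w$ is in place.
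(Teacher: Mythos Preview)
Your proposal is correct and follows essentially the same route as the paper: apply the linear trace estimate \eqref{est-p-trace} at level $k-1$ to $p$, which by Lemma \ref{lem-remainder} solves \eqref{pRpsi} with right-hand side $g = R(p,\phi)$, and then control $\ZN{R(p,\phi)}{k}$ via \eqref{RR-est} together with \eqref{ppf} and the smallness of $\phi$. The only difference is cosmetic: you split $p = q + w$ and apply Lemma \ref{lem-pressure-trace} to each piece separately, whereas the paper applies it directly to $p$ (the lemma already accommodates both boundary data and right-hand side simultaneously, so no decomposition is needed). Your observation about the $k=1$ edge case is a fair point that the paper does not address explicitly.
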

\begin{proof}
  Indeed, \eqref{nl-trace-1} follows by application of \eqref{est-p-trace} and
  since $p$ satisfies \eqref{pRpsi}.
\end{proof}
Note that the corresponding 'bilinear' estimates for $w$, $p - \tilde p$ and $w
- \tilde w$ also hold (correspondingly as in Proposition \ref{prp-p-nonlinear}):
The estimates for the case of $x$ derivative are
\begin{align} \label{nl-trace-2} %
  \XN{\partial_x(w_x)_{|\Gamma}}{k-1} \ &\leq \ C \XN{\phi}{k-1/2} \XN{f}{k},  \notag \\
  \XN{\partial_x(p_x)_{|\Gamma} - \partial_x(\tilde p_x)_{|\Gamma}}{k-1} \ &\leq
  \ %
  C \XN{\phi - \tilde \phi}{k-1/2} (\XN{f}{k} + \XN{\tilde f}{k}) %
  + C \XN{f - \tilde f}{k},  \\
  \XN{\partial_x(w_x)_{|\Gamma} - \partial_x(\tilde w_x)_{|\Gamma}}{k-1} \ %
  &\leq \ C \XN{\phi - \tilde\phi}{k-1/2} ( \XN{f}{k} + \XN{\tilde f}{k}) \notag \\
  &\qquad+ C \XN{f - \tilde f}{k} (\XN{\phi}{k-1/2} +
  \XN{\tilde\phi}{k-1/2}). \notag
\end{align}
These estimates follow analogously to \eqref{nl-trace-1} but by using the
'bilinear' estimates \eqref{einfach} for $R$. The corresponding estimates to
\eqref{nl-trace-2} with $\partial_x(w_x)_{|\Gamma}$ and $\partial_x(\tilde
p_x)_{|\Gamma}$ replaced by $(\feps)^2 \partial_x(w_y)_{|\Gamma}$ and
$(\feps)^2 \partial_x(w_y)_{|\Gamma}$, respectively, also hold.

\subsection{Estimates for the profile} \label{ss-nonlinear}

In this section, we prove Proposition \ref{prp-nonlinear}. We show that
\begin{align} \label{nl-2} %
  \PXN{N_\eps(f)}{k} + \TXN{N_\eps(f)}{0} \ \leq \ C \PXN{f}{k+1}^2.
\end{align}
The proof of \eqref{nl-1} then follows by a straightforward extension
of this estimate (indeed the estimates \eqref{einfach} show that the
main part $\partial_x \BB_\eps \partial_x$ of the operator $\NN_\eps$
can be estimated like a bilinear operator).  We recall,
\begin{align*}
 N_\eps(f) &= \ \frac{f_x}{(1+\eps^2)^{\frac
      32}} \bigg(\frac{f_{xx} - 3\eps^2 f_x^2}{1+\eps^2} \bigg)_{|x=0}
  + \partial_x B_\eps^f \bigg( \frac{f_x}{(1+\eps^2(1 + f)^2)^{\frac 32}}\bigg)
  + A_\eps f,
\end{align*}
cf. \eqref{def-N}. The two main estimates in favor of \eqref{nl-2} are:
\begin{align} %
  \PXN{f_{xx|x=0} f_{x}}{k} + \TXN{f_{xx|x=0} f_{x}}{k} \ &\leq \ C \PXN{f}{k+1}^2, \label{todo-1} \\
  \PXN{\partial_x B_\eps^f \partial_x f + A_\eps f}{k} + \PXN{\partial_x
    B_\eps^f \partial_x f + A_\eps f}{k} \ &\leq \ C
  \PXN{f}{k+1}^2.\label{todo-2}
\end{align}
Two ``nonlinear'' corrections have been neglected in the estimates
\eqref{todo-1} and \eqref{todo-2}: One correction is related to the
$-3\eps^2f_x^2|_{|x=0}$ term, the other correction is related to the term $f_x
((1+\eps^2(1 + f)^2) - 1)$. Indeed, these corrections are easily controlled as
lower order terms for sufficiently small $f$; the estimate of these terms is
left to the reader. In the following, we will give the proof of \eqref{todo-1}
and \eqref{todo-2}.
 
\medskip

\noindent{\it Proof of \eqref{todo-1}: } In view of the definition
\eqref{def-PXN}, \eqref{todo-1} follows from
\begin{align} \label{with-time} %
  \TXM{\partial_t^i (f_{xx|x=0}f_{x})}{j} \ \leq \ C \PXN{f}{k+1}^2, &&
  \text{for $0 \leq i+j \leq k$.}
\end{align}
In order to see \eqref{with-time}, we note that by \eqref{X-sob}, we have
\begin{align} \label{as-1} %
  &\XN{f_{xx|x=0}f_{x}}{0} \ = \ \nlt{f_{xx|x=0}f_{x}} \ \leq \ \nlt{f_x}
  \nco{f_{xx}} \
  \lupref{X-sob}\leq \ C \XN{f}{1/2} \XN{f}{1}, \\
  &\XN{f_{xx|x=0}f_x}{j} \ \leq \ \XN{f_x}{j} \nco{f_{xx}} \
  \lupref{X-three}\leq \ C \XN{f}{j} \XN{f}{j+1} \qquad \text{for $1 \leq j \leq
    k$}.
  \label{as-2} %
\end{align}
We take the square of \eqref{as-1} and integrate the equation in time. We then
apply H\"olders inequality with $L^2$ on the $\XN{\cdot}{1}$ term and $L^\infty$
on the $\XN{\cdot}{1/2}$ term.  In view of \eqref{est-time-trace}, this yields
\eqref{with-time} for $i=j=0$. The estimate for $i > 0$ or $j > 0$ follows from
\eqref{as-2}: We take the square of \eqref{as-2} and integrate in time. We then
apply H\"older's inequality with $L^2$ on the $\XN{\cdot}{j+1}$ term and
$L^\infty$ on the $\XN{\cdot}{j}$ term. In view of \eqref{est-time-trace}, this
yields \eqref{with-time} for $i=0$ and $1 \leq j \leq k$. It remains to prove
\eqref{with-time} for $i > 0$: In this case we note that the time derivatives on
the left hand side of \eqref{with-time} are distributed according to Leibniz'
rule on the two factors. The loss of regularity due to the time derivation is
compensated by the fact that we only need to estimate the smaller norm $j = \ell
- i$. In view of the definition of $\PXN{\cdot}{k+1}$, this yields
\eqref{with-time} for all $0 \leq i \leq \ell$ and $j \geq 1$. This concludes
the proof of \eqref{todo-1}.

\medskip

\noindent{\it Proof of \eqref{todo-2}:} %
With the same argument as before, the proof of \eqref{todo-2} can be reduced to
the following estimate which does not involve time:
\begin{align} \label{no-time-proof} %
  \XM{\partial_x B_\eps^f \partial_x f + A_\eps f}{j} %
  \ \leq \ C \XN{f}{j+1/2} \XN{f}{j+1} && \text{for $0 \leq j \leq k$},
\end{align}
where we may assume that $\XN{f}{k+1/2} \leq \alp \leq 1$ (this corresponds to
the estimate $\CXN{f}{k-1/2} \leq \alp \leq 1$ for the corresponding time-space
estimate). Let $\psi$, $\gam$ be defined in Lemma \ref{lem-psi} and let $p$, $q$
be defined as in Proposition \ref{prp-p-nonlinear}. By \eqref{def-A},
\eqref{def-q} and \eqref{def-bt}, we have
\begin{align*}
  A_\eps f_x %
  \ &= \ (1 + \eps^2)^{-\frac 32} \partial_x \big[q_x - (\feps)^{2} q_y
  \big]_{|\Gamma}, \\
  - \partial_x B_\eps^f f_x \ %
  &= \ (1 + \eps^2)^{-\frac 32} \partial_x \big[(1+f) \ (p_x - \gamma \psi_x
  p_y) - (\feps)^{2}\gamma p_y \big]_{|\Gamma}.
\end{align*}
With the notation $w = p-q$ and $C_\eps = (1 +
\eps^2)^{-3/2} \leq 1$, we hence get
\begin{align} \label{dasda} %
  A_\eps f + \partial_x B_\eps^f f_x \ = \ C_\eps \partial_x \Big[ (\feps)^{2}
  w_y - w_x - f p_x + (1+f) \gamma \psi_x p_y + (\feps)^{2} (\gamma-1) p_y
  \Big]_{|\Gamma}.
\end{align}
We first note that for all $j \geq 0$, we have
\begin{align} \label{davor} %
  \XN{(w_{x|\Gamma})_x}{j} + \XN{(\feps)^2 (w_{y|\Gamma})_x}{j} \ %
  \lupref{nl-trace-2}\leq \ \ C \XN{f}{j+1/2} \XN{f}{j+1}.
\end{align} %
For the other terms on the right hand side of \eqref{dasda} and for $j \geq 1$
we additionally use the algebra property \eqref{X-alg}. The estimate of the term
$f (p_{x|\Gamma})_x$ proceeds as follows
\begin{align*}
  \XN{(f p_{x|\Gamma})_x}{j} \ %
  &\upref{X-alg}\leq \ \XN{f_x}{j} \XN{p_{x|\Gamma}}{j} + \XN{f}{j}
  \XN{(p_{x|\Gamma})_x}{j} \ %
  \lupupref{X-three}{nl-trace-1}\leq \ C \XN{f}{j+1/2} \XN{f}{j+1}, %
\end{align*} %
where we also used $\nlt{p_{x|\Gamma}} \leq C \YN{p}{1} \ \leq C
\XN{f}{1}$ which follows from \eqref{Y-sob} and \eqref{ppf}. We also
note that in view of \eqref{psi-2} we have $\XN{\eps
  \psi_{x|\Gamma}}{k+1/2} + \XN{\psi_{y|\Gamma}}{k+1/2} \leq C \alp <
1$ if the constant $\alp$ in the assumption of Theorem \ref{thm-wedge}
is chosen sufficiently small. In view of the Taylor expansion $1 -
\gamma = \psi_y - \psi_y^2 + \ldots$ and by \eqref{psi-2}, this
implies also $\XN{\gamma_{|\Gamma} - 1}{j} \leq
\XN{\psi_{y|\Gamma}}{j} \leq C$. For $j \geq 1$, the estimate of the
remaining terms in \eqref{dasda} then follows using these estimates
together with \eqref{X-alg}, \eqref{psi-2} and \eqref{nl-trace-1}.

\medskip

It remains to give the estimate for $j = 0$, where the algebra property
\eqref{X-alg} does not apply: We show the estimate for the nonlinear term $(f
p_{x|\Gamma})_x$, the estimate for the other terms proceeds analogously. Indeed,
we have
\begin{align*}
  \nlt{(f p_{x|\Gamma})_x} \ %
  \leq \ \nco{f} \nlt{(p_{x|\Gamma})_x} + \nlt{f_x} \nco{p_{x|\Gamma}} \ %
  \upref{nl-trace-1}\leq \ C \XN{f}{1/2} \XN{f}{1},
\end{align*}
where we also used $\nlt{p_{x|\Gamma}} \leq C \YN{p}{1} \ \leq C
\XN{f}{1}$ which follows from \eqref{Y-sob} and \eqref{ppf}. The
estimate of the other terms proceeds analogously. This concludes the
proof of \eqref{no-time-proof} and hence of the lemma.

\section{Localization argument} \label{sec-local}

We prove Propositions \ref{prp-local-linear}-\ref{prp-local-nonlinear} thus
concluding the proof of Theorem \ref{thm-local} in Section
\ref{sec-overview}. We will use the notation used in the proof of this
theorem. We first note that the derivative $\DLL_\eps^w$ of $\mathcal L_\eps$ at
$w$ is given by
\begin{align} \label{L-der} %
  \DLL_\eps^w f \ = \ L_\eps^w f + K_\eps^w f,
\end{align}
cf. \eqref{def-LL1}, where the operators $L_\eps$ (leading order) and $K_\eps^w$
(remainder) are given by
\begin{align}
  \displaystyle L_\eps^w f \ = \ f_t - \frac 1{(1+\eps^2)^{3/2}}\big(B_\eps^w
  f_{x} \big)_x \ %
  =: \ f_t + A^w_\eps f
\end{align}
and
\begin{align}
  K_\eps^w f \ &= \ (D^w - 1) f_t + \dot D^w f + \partial_t (\DD^w f w) \\
  &\qquad - \big[ \big((1 - x) \dot s_- + x \dot s_+ \big) f \big]_x - \big[
  \big((1 - x) \dot \sigma_{-}^w + x \dot \sigma_{+}^w \big) w
  \big]_x \\
  &\qquad - \Big[ B_\eps^w \big(\frac{3w_x (2 \eps^2 w_xf)}{2(1+\eps^2 (w +
    f^*)^2)^{\frac 52}} \big) \Big]_x %
  + \Big[ \delta B_\eps^w \big(f, \frac{w_x}{(1+\eps^2 w^2)^{\frac
        32}}\big) \Big]_x. \notag
\end{align}
Here, we have introduced the following notation: $s_\pm^{w}(t)$ is defined as in
\eqref{speed-local} with $f$ replaced by $w$; $D^w$ is defined as in
\eqref{def-MD} with $s_\pm(t)$ replaced by $s_\pm^{w}(t)$. Furthermore $\mathcal
D^w f = \delta D^w(f)$ and $\sigma_{\pm, w} f = \delta s_{\pm, w}(f)$ are the
linearizations of $D^w$, $\sigma_{\pm, w}$. Furthermore, $\delta B^w$ denotes
the shape derivative of $B^w$ (derivative in $w$ direction).  A dot on the top
of a symbol denotes the time derivative.

\medskip

The proof is based on two small parameters $\delta, \tau > 0$. The parameter
$\delta$ is used to localize the estimates near the boundary. Note that in the
interior where $C > h > c > 0$, the operator is uniformly parabolic. The
parameter $\tau$ signifies the time integral where the solution is defined. In
the course of the proof, we will choose $\delta$ and $\tau = \tau(\delta)$, in
this order, to be sufficiently small. In this section, we write $c$, $C$ for all
constants which depend only on $f_{\rm in}, k$, but neither depend on $\delta$
nor $\tau$.

\medskip

For $\delta > 0$, we define a covering of $E = (0,1)$ by setting
$\EEEE_{1\delta} := (0,2\delta), \EEEE_{2\delta} := (\delta,1-\delta),
\EEEE_{3\delta} := (1-2\delta, 1)$. Correspondingly, let $Q_{i\delta\tau} :=
(0,\tau) \times \EEEE_{i\delta}$. We choose a smooth partition of unity $\{
\psi_{i\delta} \}_{i=1,2,3}$, subordinate to this covering with $\psi_{i\delta}
\in C_c^{\infty}(\overline \EEEE_{i\delta}, [0,1])$, $\sum_i \psi_{i\delta} = 1$
on $\EEEE$ and also $\nco{\partial^j \psi_{i\delta}} \leq C \delta^{-j}$ for all
$j \geq 0$. We also define a second set of cut-off functions $\{ \tilde
\psi_{i\delta} \}_{i=1,2,3}$: Let $ \tilde \EEEE_{1\delta} := (0,4\delta),
\tilde \EEEE_{2\delta} := (3\delta,1-3\delta), \tilde \EEEE_{3\delta} :=
(1-4\delta, 1)$.  We choose $\psi_{1\delta} \in C_c^{\infty}(\overline
\EEEE_{i\delta}, [0,1])$ with $\tilde \psi_{i\delta} = 1$ in $E_{i\delta}$ and
in particular, $\tilde \psi_{i\delta} \psi_{i\delta} = \psi_{i\delta}$. Finally,
we also consider the cut-off function $\hat \psi_{1\delta} $, supported on an
even larger set such that $\hat \psi_{1\delta} \tilde \psi_{1\delta} = \tilde
\psi_{1\delta}$. The support of $\hat \psi_{1\delta} = 1$ is e.g. included in
$(0,5\delta)$, the functions $\hat \psi_{2\delta}, \hat \psi_{3\delta}$ are
defined analogously.

\medskip

Since $\XNL{f_{\rm in}}{3/2}{\EEEE} < C$ and $\PXNL{w}{2}{Q_\tau} < C$, it
follows that $f_{\rm in}$ and $w$ are H\"older continuous in time and
space. Therefore, by the boundary condition $|h_{\rm in,x}(0)| = |h_{\rm
  in,x}(1)| = 1$ and recalling that $h_{\rm in} = x(1-x) + \int_0^x f_{\rm
  in}(x') dx'$, there is $\delta_0 > 0$ such that for all $\delta < \delta_0$ we
have $|\partial_x h_0| \in (\frac 12, 2)$ in $\EEEE_{1\delta} \cap
\EEEE_{3\delta}$ and $h_0 \in (c, C)$ in $\EEEE_{2\delta}$. Similarly, there is
$\delta_0$ and $\tau_0$ such that the corresponding estimates hold for
$h^w(\tau) = x(1-x) + \int_0^x w(\tau,x') dx'$ for all $\delta < \delta_0$ and
every fixed time $\tau < \tau_0$. In the sequel, we will always assume $0 <
\delta, \tau < \min \{ \delta_0, \tau_0, 0.1 \}$.

\subsection{Proof of Proposition \ref{prp-local-linear}}

\begin{proof}[Proof of Proposition \ref{prp-local-linear}]
  In view of the extension Lemma \ref{lem-extension}, we only need to consider
  the case of zero initial data so that in the following we may assume $f_{\rm
    in} = 0$. We begin with the proof of maximal regularity estimate
  \eqref{est-linloc}. That is we assume that $f$ satisfies
  \begin{align} \label{fgeq} %
    \delta \LL_\eps^w f \ = \ g \ \ \text{in } Q_{\tau} && \text{and } && f \ =
    \ 0 \quad \text{on $(0,\tau) \times \partial E$}
  \end{align}
  and with initial data $f = 0$ and we will show
  \begin{align} \label{locest} %
    \PXNL{f}{k+1}{Q_\tau} \ \leq  \ C \PXNL{g}{k}{Q_\tau}.
  \end{align}
With the partition of unity $\psi_{i\delta}$, $i=1,2,3$ and by the triangle inequality, we have
  \begin{align} \label{split} %
    \PXNL{f}{k+1}{Q_\tau} \ \leq \ \PXNL{\psi_{1\delta} f}{k+1}{Q_\tau} +
    \PXNL{\psi_{2\delta} f}{k+1}{Q_\tau} + \PXNL{\psi_{3\delta} f}{k+1}{Q_\tau}.
  \end{align}
  We begin with the estimate for $\psi_{1\delta} f$ (related to the left
  boundary of the domain). The idea is to use that on $Q_{1\delta\tau}$ (that is
  near the left boundary), $\partial \LL$ (and also $L_\eps^w$) are approximated
  by $L_\eps$, where $L_\eps f := f_t + A_\eps f$ and where $A_\eps$ is defined
  in \eqref{def-A}. % By Proposition \ref{prp-linear} we get
  % \begin{align} \label{E-1} \PXNL{\psi_{1\delta} f}{k+1}{Q_\tau} &\leq \
  %   C \PXNL{L_\eps' (\psi_{1\delta} f)}{k}{Q_\tau}.
  % \end{align}
  We first claim that
  \begin{align} \label{byloc} %
    \PXNL{\psi_{1\delta} f}{k+1}{Q_\tau} &\leq C_\delta \PXNL{\tilde
      \psi_{1\delta} L_\eps (\psi_{1\delta} f)}{k}{Q_\tau } + C_\delta \PXNL{
      f}{k}{Q_\tau} + \frac1{10} \PXNL{ f}{k+1}{Q_\tau},
  \end{align}
  where we recall $\tilde \psi_{1\delta} \psi_{1\delta} = \psi_{1\delta}$ and
  hence $(1-\tilde \psi_{1\delta}) \psi_{1\delta} = 0$.  In order to see
  \eqref{byloc}, we first note that by \eqref{l-2}, we get $\PXNL{\psi_{1\delta}
    f}{k+1}{Q_\tau} \leq C \PXNL{L_\eps \psi_{1\delta} f}{k}{(0,\tau) \times K}$
  (the $L^2$-term is estimated by the other term since the support of the right
  hand side is bounded). Furthermore,
  \begin{align*}
    L_\eps (\psi_{1\delta} f) \ & %
    = \ \tilde \psi_{1\delta} L_\eps
    (\psi_{1\delta} f) + (1 - \tilde \psi_{1\delta}) L_\eps (\psi_{1\delta} f) \\ %
    &= \ \tilde \psi_{1\delta} A_\eps (\psi_{1\delta} f) + \psi_{1\delta} f_t +
    (1 - \tilde \psi_{1\delta}) A_\eps (\psi f).
  \end{align*}
  Furthermore, by \eqref{def-A} and since $\tilde \psi_{1\delta}$ depends only
  on $x$, by a short calculation we obtain
  \begin{align*}
    A_\eps(\psi_{1\delta} f ) &= \partial_x (-p'_x + \frac1{\eps^2} p'_y), \\
    (1 - \tilde \psi_{1\delta}) A_\eps(\psi_{1\delta} f ) &= \partial_x (-
    \tilde p'_x + \frac1{\eps^2} \tilde p'_y ) - \partial_{xx} \tilde
    \psi_{1\delta} p' - 2 \partial_x \tilde \psi_{1\delta} p'_x +
    \frac1{\eps^2}\partial_x \tilde \psi_{1\delta} p'_y,
  \end{align*}
  where $p' : K \to \R$ and $\tilde p' := (1 - \tilde \psi_{1\delta}) p'$ are
  the solutions of
  \begin{align} \label{p'-def1} %
    \left \{
      \begin{array}{ll}
        \Delta_\eps p' = 0  \ &\text{ in }  K, \\
        p' = (\psi_{1\delta} f)_x  &\text{ on } \Gamma = \partial_1 K , \\
        p'_y = 0  &\text{ on } \partial_0 K.
      \end{array}
    \right. && %
    \hspace{-2ex}
    \left\{
      \begin{array}{ll}
        \Delta_\eps \tilde p'  =  
        -2 \partial_x \tilde \psi_{1\delta} p'_x - \partial_x^2 \tilde \psi_{1\delta} p'   \  & \text{ in }  K, \\
        \tilde p' = 0  & \text{ on } \partial_1 K,     \\
        \tilde p'_y = 0  & \text{ on } \partial_0 K.
      \end{array}
    \right.
    \hspace{-2ex}
  \end{align}
  In the following, we use the following Rellich-type estimate for lower order
  terms which can be obtained by standard interpolation: For any given $\delta >
  0$, we have
  \begin{align} \label{small-large} %
    \XN{f_{xx}}{k-1}  \ \leq \ \delta \XN{f}{k}  + C_\delta  \XN{f}{k-1}.
  \end{align}
  Using this inequality, estimate \eqref{byloc} follows by application 
  of Proposition \ref{prp-p-linear}.

  \medskip

  Let $[L_\eps^w,\psi_{1\delta}] := L_\eps^w \psi_{1\delta} -
  \psi_{1\delta}L_\eps^w$ denote the commutator of $L_\eps^w$ and
  $\psi_{1\delta}$, then
  \begin{align}  \label{E-2} %
    \hspace{6ex} & \hspace{-6ex} %
    \PXNL{ \tilde \psi_{1\delta} L_\eps (\psi_{1\delta} f)}{k}{Q_{\tau}} %
    \leq \PXNL{ \tilde \psi_{1\delta} (L_\eps-\delta
      \LL_\eps^w)(f\psi_{1\delta})}{k}{Q_\tau}
    +  \PXNL{ \tilde \psi_{1\delta} \delta \LL_\eps^w (\psi_{1\delta} f)}{k}{Q_\tau}  \notag \\
    &\leq \PXNL{ \tilde \psi_{1\delta} (L_\eps-\delta
      \LL_\eps^w)(f\psi_{1\delta})}{k}{Q_\tau} + \PXNL{ \tilde \psi_{1\delta}
      [\delta \LL_\eps^w,\psi_{1\delta}]f}{k}{Q_\tau} %
    + \PXNL{\psi_{1\delta} g}{k}{Q_\tau},
  \end{align}
  where we have used $\psi_{1\delta} \tilde \psi_{1\delta} =
  \psi_{1\delta}$. The estimate of the first and second term on the right-hand
  side of the above estimate is given in Lemmas \ref{lem-difference} and
  \ref{lem-commutator}. Choosing $\delta$ sufficiently small, we hence obtain
  \begin{align*}
    \PXNL{\psi_{1\delta} f}{k+1}{Q_\tau} %
    &\leq \ C_\delta \PXNL{g}{k}{Q_\tau} + C_\delta \PXNL{f}{k}{Q_\tau} +
    \frac16 \ \PXNL{f}{k+1}{Q_\tau}.
  \end{align*}
  The third term on the right hand side of \eqref{split} can be estimated
  analogously. For the middle term (corresponding to the interior of the
  domain), we note that in the interior, our weighted Sobolev norms are
  equivalent to standard Sobolev norms (with equivalence depending on $\delta$)
  and an analogous estimate to the one above can be achieved for the middle term
  using standard parabolic estimates, see also
  \cite{Eidelman-Book,Krylov-Book}. Altogether, these estimates yield
  \begin{align*}
    \PXNL{f}{k+1}{Q_{\tau}} \ \upref{split}\leq \ C_\delta \PXNL{g}{k}{Q_{\tau}} +
    C_\delta \PXNL{f}{k}{Q_\tau} + \frac 12 \PXNL{f}{k+1}{Q_{\tau}} .
  \end{align*}
  Then using that $f_{|t=0} = 0$, we deduce that $\PXNL{f}{k}{Q_\tau} \leq C
  \tau^{1/2} \PXNL{f}{k+1}{Q_{\tau}}$ and we choose $\tau $ such that $C_\delta
  C \tau^{1/2} < \frac 16$. Hence,
  \begin{align*}
    \PXNL{f}{k+1}{Q_{\tau}} \ \upref{split}\leq \ C_\delta \PXNL{\DLL_\eps^w
      f}{k}{Q_{\tau}} + \frac 12 \PXNL{f}{k+1}{Q_{\tau}},
  \end{align*}
  which yields \eqref{locest} by absorbing on the left hand side. 

  \medskip

  The existence part is similar to the proof of Lemma 3.4 of
  \cite{GiacomelliKnuepfer-2010}. Indeed, the argument used there can be
  generalized since only very little of the particular structure is used: The
  main ingredient in this argument is existence and maximal regularity for the
  linearized operator at the boundary. The second ingredient is the fact that
  existence of a solution together with estimates in the interior follows by
  standard parabolic theory. We have already proved these properties for our
  operator. Finally, the argument also requires that the operator can be
  localized in the sense that the long-range interaction of the solution
  operator only yields a lower order contribution. Indeed, we have used this
  idea already in the proof of \eqref{byloc}.
\end{proof}

In the following we give the estimate for the right-hand side of \eqref{E-2}. We
use the notations and assumptions of the proof of Proposition
\ref{prp-local-linear}.
\begin{lemma}[Estimate of difference] \label{lem-difference} %
  For $ 0< \tau < 1$, we have
  \begin{gather} \label{est-dif} %
    \PXNL{ \tilde \psi_{1\delta} (L_\eps - \delta \LL_\eps^w) (\psi_{1\delta}
      f)}{k}{Q_\tau} \leq \ C_\delta \PXNL{ f }{k}{Q_\tau} + C \delta
    \PXNL{f}{k+1}{Q_\tau}.
  \end{gather}
\end{lemma}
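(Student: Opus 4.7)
The plan is to split
\[
(L_\eps - \DLL_\eps^w)(\psi_{1\delta} f) \ = \ (L_\eps - L_\eps^w)(\psi_{1\delta} f) \ - \ K_\eps^w(\psi_{1\delta} f),
\]
where the first term captures the change of the Dirichlet--Neumann operator when passing from the infinite wedge $K$ to the perturbed wedge $K^w$, and the second term collects the five lower-order contributions in \eqref{L-der}. The whole argument hinges on the fact that, on the support of $\tilde\psi_{1\delta}$, the approximate solution $w$ is small: since $w_{|t=0} = f_{\rm in}$ vanishes at $x=0$ and $w \in \PXSL{k+1}{[0,\infty) \times \EEEE}$ embeds into a space--time H\"older class via Proposition \ref{prp-X}(2) together with standard time-interpolation, we obtain
\[
\sup_{(t,x) \in (0,\tau) \times \tilde \EEEE_{1\delta}} |w(t,x)| \ \leq \ C\big(\delta^{\alpha_0} + \tau^{\alpha_0}\big)
\]
for some $\alpha_0 > 0$, while all spatial derivatives of $w$ up to order $k$ remain uniformly bounded in terms of $\XNL{f_{\rm in}}{k+1/2}{\EEEE}$. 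After choosing $\tau \leq \tau(\delta)$ sufficiently small, the pointwise quantity on the left is $\leq C\delta$.

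For the first piece I use the identity $L_\eps - L_\eps^w = (1+\eps^2)^{-3/2}\,\partial_x(B_\eps - B_\eps^w)\,\partial_x$ and apply the shape-dependence estimate of Proposition \ref{prp-p-nonlinear} with profile $\phi = w$ and boundary data $(\psi_{1\delta} f)_x$, followed by the bilinear trace bound \eqref{nl-trace-2} of Lemma \ref{lem-trace-nonlinear}. After integrating in time and using the algebra property (Lemma \ref{lem-algebra}) to distribute derivatives between $w$ and $\psi_{1\delta} f$, this yields a bound of the form
\[
\PXNL{(L_\eps - L_\eps^w)(\psi_{1\delta} f)}{k}{Q_\tau} \ \leq \ C\,M(w)\,\PXNL{\psi_{1\delta} f}{k+1}{Q_\tau} + C_\delta\,\PXNL{f}{k}{Q_\tau},
\]
where $M(w)$ is a weighted $L^\infty$-type quantity of $w$ on $\tilde\EEEE_{1\delta} \times (0,\tau)$, bounded by $C\delta$ thanks to the pointwise smallness above. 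The lower-order leftovers (e.g.\ coming from commutators with $\psi_{1\delta}$, or from terms where fewer derivatives fall on $f$) are handled via the Rellich-type interpolation \eqref{small-large} and absorbed in the $C_\delta\,\PXNL{f}{k}{Q_\tau}$ contribution.

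The second piece is estimated termwise. The scalar coefficients $D^w - 1$, $\dot D^w$, $\dot\sigma_\pm^w$ are either of order $O(\tau)$ (since $D^w(0) = 1$) or of order $O(1)$ but multiply $f$ without a top-order derivative, hence contribute $C\tau\,\PXNL{f}{k+1}{Q_\tau}$ or $C\,\PXNL{f}{k}{Q_\tau}$. The linear shape-derivative terms $\DD^w f$ and $\sigma_\pm^w f$ are controlled by Proposition \ref{prp-X} and Lemma \ref{lem-algebra}, while the two nonlocal contributions $[B_\eps^w(\cdots f \cdots)]_x$ and $[\delta B_\eps^w(f,\cdots)]_x$ are handled by the same bilinear application of Proposition \ref{prp-p-nonlinear} and \eqref{nl-trace-2} as in the first piece, with the $w$-factor providing the $C\delta$ smallness on the localized region. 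The main obstacle throughout is to ensure that whenever the top-order norm $\PXNL{f}{k+1}{Q_\tau}$ appears, the accompanying coefficient is a pointwise-small quantity of $w$ (yielding a genuine $C\delta$) rather than a full norm of $w$ (which would only yield $C_\delta$); this forces a careful book-keeping of derivatives in the bilinear estimates, relying crucially on the sharp trace bound \eqref{nl-trace-2} and the uniformity in $\eps$ of Proposition \ref{prp-p-nonlinear}.
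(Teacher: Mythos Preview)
Your high-level decomposition into $(L_\eps - L_\eps^w)(\psi_{1\delta} f)$ and $K_\eps^w(\psi_{1\delta} f)$ matches the paper, and your treatment of $K_\eps^w$ is essentially what the paper does (it dispatches this piece in one line, citing \eqref{small-large}). The substantive issue is your handling of the main piece $L_\eps - L_\eps^w$.

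You write ``apply Proposition \ref{prp-p-nonlinear} with profile $\phi = w$'', but this conflates two different operators. In Section \ref{sec-local} the symbol $B_\eps^w$ denotes the Dirichlet--Neumann operator on the \emph{bounded droplet} $\Omega^w = \{0<x<1,\,0<y<h^w(x)\}$ with $h^w = x(1-x)+\int_0^x w$, whereas Proposition \ref{prp-p-nonlinear} concerns the operator on the \emph{perturbed infinite wedge} $K^\phi$. These live on genuinely different domains (one compact with two contact points, one unbounded with one), and there is no direct identification ``$B_\eps^w = B_\eps^\phi$ with $\phi=w$''. The paper resolves this by first solving for the pressure $P$ on $\Omega^w$, then observing that the localized pressure $\tilde\psi_{1\delta}P$ can be regarded as living on a perturbed wedge $K^{\hat\psi_{1\delta}(w-x/2)}$ (since the two domains coincide on $\supp\tilde\psi_{1\delta}$), pulling back via the transform $\Psi$ of Lemma \ref{lem-psi}, and only then comparing with the linear wedge pressure $p'$ through the difference $\pi = p - p'$ satisfying $\Delta_\eps(\tilde\psi_{1\delta}\pi) = R(p,\psi) + \KK_2$. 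This reduction is the technical heart of the lemma and cannot be bypassed by a direct citation of Proposition \ref{prp-p-nonlinear}.

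A second gap concerns the smallness mechanism. You assert that the bilinear estimate \eqref{nl-trace-2} produces a coefficient ``$M(w)$, a weighted $L^\infty$-type quantity of $w$'', bounded by $C\delta$. But \eqref{ppf-2} and \eqref{nl-trace-2} give the smallness factor as the full norm $\XN{\phi}{k-1/2}$, not a pointwise $L^\infty$ bound; this norm involves high-order weighted derivatives and does not become small simply because $|w|\leq C\delta$ on $\tilde\EEEE_{1\delta}$. In the paper the $C\delta$ arises more delicately: part of it from the pointwise bound $|h_x^w - 1|\leq C\delta$ on $\supp\tilde\psi_{1\delta}$ (giving the remainder $\KK_3$), and part from the estimate $\ZN{R(p,\psi)}{k+1}\leq C\delta\XN{f}{k+1}$ for the localized profile, which itself requires the explicit pressure representation rather than a black-box application of the wedge bilinear bound.
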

 \begin{proof}
  Let $p'$ and $P$ be the solutions of
  \begin{align} \label{P-def} %
     \left \{
       \begin{array}{ll}
         \Delta_\eps p' = 0  \qquad &\text{ in }  K, \\
         p' = (\psi_{1\delta} f)_x  &\text{ on } \partial_1 K , \\
         p'_y = 0  &\text{ on } \partial_0 K,
       \end{array}
     \right.
    && %
    \left \{
      \begin{array}{ll}
        \Delta_\eps P = 0  \qquad &\text{ in }  \Omega^w, \\
        P = (\psi_{1\delta} f)_x  &\text{ on } \partial_1 \Omega^w, \\
        P_y = 0  &\text{ on } \partial_0 \Omega^w,
      \end{array} 
    \right.
  \end{align}
  where $\Omega^w = \{ (x,y) \, | \, 0< x < 1, \, \hbox{and }\, 0< y < h^w(x)
  \}$ and $\partial_1\Omega^w = \graph h^w$ and $h^w = x(1-x) + \int_0^x w(x')
  dx'$.  The existence of a solution $p'$ follows from Proposition
  \ref{prp-p-linear}, the existence of a solution $P$ can be shown with similar
  arguments as in the proof of Proposition \ref{prp-p-nonlinear}; they will not
  be detailed here. The reason to introduce these two functions is that we have
  $L_\eps - L_\eps^w = A_\eps - A_\eps^w$. Furthermore,
  $A_\eps(\psi_{1\delta} f ) = \partial_x (-p'_x + \frac1{\eps^2} p'_y ) $ and $
  A_\eps^w (\psi_{1\delta} f ) = \partial_x ( - h_x^w P_x + \frac1{\eps^2}
  P_y ) $.  Note that $A_\eps$ and $A_\eps^w$ are not defined on the same
  interval in $x$. To compare them, we therefore use the cut-off function
  $\tilde \psi_{1\delta}$. Indeed, $ \tilde \psi_{1\delta} P $ can be seen as a
  function in the domain $K^{\hat \psi_{1\delta} (w -\frac{x}2) } $ since the
  domains $K^{\hat \psi_{1\delta} (w -\frac{x}2) } $ and $\Omega^w $ coincide on
  the support of $ \tilde \psi_{1\delta} P $ (recall that $\hat \psi_{1\delta}
  \tilde \psi_{1\delta} = \tilde \psi_{1\delta}$).
 % Moreover, $\tilde \psi_{1\delta} P $ and $ \tilde
 % \psi_{1\delta} p' $ solve systems similar to \eqref{P-def} and \eqref{p'-def}
 %  with some extra terms on the right-hand side which are lower order.
 More
  precisely, $\tilde \psi_{1\delta} P $ solves
\begin{align} \label{P-def1} %
    &\left \{
      \begin{array}{ll}
        \Delta_\eps  (\tilde \psi_{1\delta} P)  = 
        2 \partial_x  \tilde \psi_{1\delta}  \partial_x P +  
        \partial_x^2   \tilde \psi_{1\delta} \,  P     \qquad &\text{ in } 
  K^{\hat  \psi_{1\delta}  (w -\frac{x}2) } , \\
        ( \tilde \psi_{1\delta} P   )  = (\psi_{1\delta} f)_x  &\text{ on } \partial_1 
 K^{\hat  \psi_{1\delta} (w -\frac{x}2)  }, \\
        ( \tilde \psi_{1\delta} P   )_y = 0  &\text{ on } \partial_0 K^{\hat  \psi_{1\delta}  (w -\frac{x}2) }.
      \end{array} 
    \right.
  \end{align}
  We use Lemma \ref{lem-psi} to construct a coordinate transform $\Psi =
  (\psi,id)$ from $K$ to $K^{\hat \psi_{1\delta} (w -\frac{x}2) }$.  Hence,
  arguing as in the proof of Proposition \ref{prp-p-nonlinear}, we deduce that $
  p = P \circ \Psi $ solves %a system similar to \eqref{pRpsi}
  \begin{align} \label{p-def2} %
    \left \{
      \begin{array}{ll}
        \Delta_\eps  (\tilde \psi_{1\delta}  p ) \ = \   R(p,\psi) +\KK_1(p, \psi)  \qquad &\text{ in }  K, \\
        \pi  \ = \ 0  &\text{ on } \partial_1 K , \\
        \pi_y \ = \ 0  &\text{ on } \partial_0 K.
      \end{array}
    \right.
  \end{align} 
  where $R(p,\psi)$ is given in \eqref{def-R} and where $\KK_1(p, \psi)$
  is a lower order term involving at most one derivative of $P$ and such that
  $\supp \KK_1 \subseteq \supp \partial_x \tilde \psi_{1\delta}$.  Arguing as in
  the proof of Proposition \ref{prp-p-nonlinear}, we deduce that $ \tilde
  \psi_{1\delta} p \in Y^1_\eps $ and hence $\tilde \psi_{1\delta} P \in
  Y^1_\eps (\Omega^w)$. Similarly, it can also be shown that $ (\tilde
  \psi_{2\delta} + \tilde \psi_{3\delta} ) P \in Y^1_\eps (\Omega^w) $.  One can
  then start a bootstrap argument and prove that $ P \in Y^{k+1}_\eps (\Omega^w)
  $, $ \| P \|_{ Y^k_\eps (\Omega^w) } \leq C(w) \| \psi_{1\delta} f
  \|_{X^k_\eps} $ and $ \| P \|_{ Y^{k+1}_\eps (\Omega^w) } \leq C(w) \|
  \psi_{1\delta} f \|_{X^{k+1}_\eps} $. Note that $\tilde \psi_{1\delta} p'$
  satisfies the same system as \eqref{P-def1} when replacing $P$ by $p'$ and
  $K^{\hat \psi_{1\delta} (w -\frac{x}2) }$ by $K$. Taking the difference, $\pi
  = p - p' $, we hence obtain
  \begin{align} \notag % \label{pp'-def1} %
    \left \{
      \begin{array}{ll}
        \Delta_\eps  (\tilde \psi_{1\delta}  \pi ) \ = \ R(p,\psi) +\KK_2(p,p',\psi)  \qquad &\text{ in }  K, \\
        \pi \  = \ 0  &\text{ on } \partial_1 K , \\
        \pi_y \ = \ 0  &\text{ on } \partial_0 K.
      \end{array}
    \right.
  \end{align} 
  where $\KK_2(p,p',\psi)$ is a lower order term with at most one
  derivative of $p$ or $p'$.  We have
  \begin{align*}
    \tilde \psi_{1\delta} A^w_\eps(\psi_{1\delta} f ) & = \big( - h^w_x ( \tilde
    \psi_{1\delta} P )_x
    + \frac1{\eps^2} ( \tilde \psi_{1\delta} P )_y  +  h^w_x P 
 (\tilde \psi_{1\delta} )_x \big)_x   -\partial_x  \tilde \psi_{1\delta}  ( - h^w_x  P_x + \frac1{\eps^2} P_y  ), \\
    \tilde \psi_{1\delta} A_\eps(\psi_{1\delta} f ) & = \big( - ( \tilde
    \psi_{1\delta} p' )_x + \frac1{\eps^2} ( \tilde \psi_{1\delta} p' )_y + p'
    (\tilde \psi_{1\delta} )_x \big)_x - \partial_x \tilde \psi_{1\delta} (
    -p'_x + \frac1{\eps^2} p'_y ).
  \end{align*}
  Hence and since $|h^w_x - 1 | \leq C \delta$ on $\supp \tilde \psi_{1\delta}$,
  we obtain
  \begin{align}
    \hspace{6ex} & \hspace{-6ex} %
    \tilde \psi_{1\delta} (L^w_\eps- L_\eps )(\psi_{1\delta} f ) \ %
    = \ \tilde \psi_{1\delta}    (A^w_\eps-   A_\eps )(\psi_{1\delta} f ) \nonumber \\
    & = \big( - ( \tilde \psi_{1\delta} \pi )_x + \frac1{\eps^2} ( \tilde
    \psi_{1\delta} \pi )_y + \pi (\tilde \psi_{1\delta} )_x \big)_x - \partial_x
    \tilde \psi_{1\delta} ( -\pi_x + \frac1{\eps^2} \pi_y ) +
    \KK_3, \label{3terms}
  \end{align}
  where the remainder term $\KK_3$ satisfies $ \PXNL{\KK_3}{k}{Q_\tau} \leq C
  \delta \PXNL{f}{k+1}{Q_\tau}$. Notice that the second term on the right-hand
  side on \eqref{3terms} consists of lower order term. Hence, we only need to
  estimate the first term.  We have
  \begin{align} \notag %
    \ZN{\KK_2}{k+1} \leq C_\delta \XN{ f}{k} + \delta \XN{ f}{k+1}
    &&\text{and}&& %
    \ZN{ R(p,\psi) }{k+1} \leq C \delta \XN{ f}{k+1} .
  \end{align}
  Hence, $ \| \pi \|_{ Y^{k+1}_\eps } \leq C_\delta \| f \|_{X^{k}_\eps} + C
  \delta \| f \|_{X^{k+1}_\eps} $ and \eqref{est-dif} follows easily.  In a
  sense we proved that the difference between $A^w_\eps$ and $ A_\eps$ comes
  from either terms which are small or terms which are more regular.  This shows
  the estimate of $L_\eps - L_\eps^w$, the estimate of $K_\eps^w$ follows
  similarly also using \eqref{small-large}. This concludes the proof of the
  Lemma.
\end{proof}

\begin{lemma}[Estimate of commutator] \label{lem-commutator} %
  For $ 0< \tau < 1$, we have 
  \begin{gather*}
    \PXNL{ \tilde \psi_{1\delta} [\delta \LL_\eps,\psi_{1\delta}]f}{k}{Q_\tau}
    \leq \ C_\delta \PXNL{f}{k}{Q_\tau} + C \delta \PXNL{f}{k+1}{Q_\tau}.
  \end{gather*}
\end{lemma}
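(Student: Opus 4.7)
The plan is to split $\delta\LL_\eps^w = L_\eps^w + K_\eps^w$ as in \eqref{L-der} and treat the two commutators separately. Since $\psi_{1\delta}$ depends only on $x$, one has $[\partial_t,\psi_{1\delta}]=0$, so the $L_\eps^w$-commutator reduces to the spatial one:
\begin{equation*}
[L_\eps^w,\psi_{1\delta}]f \ = \ -(1+\eps^2)^{-3/2}\,[\partial_x B_\eps^w\partial_x,\psi_{1\delta}]f.
\end{equation*}
Expanding via Leibniz, this commutator decomposes into three types of terms:
\begin{equation*}
\partial_x B_\eps^w(\psi_{1\delta}' f), \qquad \psi_{1\delta}' B_\eps^w f_x, \qquad \partial_x [B_\eps^w,M_{\psi_{1\delta}}] f_x,
\end{equation*}
where $M_{\psi_{1\delta}}$ denotes multiplication. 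Each of these has one fewer derivative acting on $f$ than $A_\eps^w$ itself, and each comes multiplied by derivatives of $\psi_{1\delta}$ supported in the annulus $(\delta,2\delta)$.

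For the first two (local) pieces, observe that $\tilde\psi_{1\delta}\psi_{1\delta}'$ is supported where $x\sim\delta$, while $|\partial_x^j\psi_{1\delta}|\le C\delta^{-j}$. The $x$-weights $d(x)^{\ell+1}$ and $d(x)$ appearing in \eqref{norm-integer} are then comparable to $\delta^{\ell+1}$ and $\delta$ on $\supp \psi_{1\delta}'$, so the $\delta^{-j}$ loss from derivatives of $\psi_{1\delta}$ is compensated by the weights. Combined with the pressure trace estimate \eqref{nl-trace-1} (applied to $B_\eps^w(\psi_{1\delta}' f)$ and $B_\eps^w f_x$) and the interpolation inequality \eqref{small-large}, this yields
\begin{equation*}
  \PXNL{\tilde\psi_{1\delta}\,\partial_x B_\eps^w(\psi_{1\delta}' f)}{k}{Q_\tau} + \PXNL{\tilde\psi_{1\delta}\,\psi_{1\delta}' B_\eps^w f_x}{k}{Q_\tau} \ \leq \ C_\delta \PXNL{f}{k}{Q_\tau} + C\delta \PXNL{f}{k+1}{Q_\tau}.
\end{equation*}

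The third, genuinely nonlocal, piece is the main obstacle. Here we write $B_\eps^w f_x = (-P_x + (\tfrac{1}{\eps})^2 P_y)|_{\Gamma^w}$ with $P$ solving \eqref{p-K}, and analogously for $B_\eps^w(\psi_{1\delta}f_x)$, so that $[B_\eps^w,M_{\psi_{1\delta}}]f_x$ is the trace on $\Gamma^w$ of a harmonic function whose Dirichlet data vanish outside $\supp\psi_{1\delta}'$. Pulling back to the fixed wedge via Lemma \ref{lem-psi} and invoking the elliptic estimate \eqref{est-p} of Proposition \ref{prp-p-linear} together with the trace estimate of Lemma \ref{lem-pressure-trace}, the support localization of the boundary data produces an extra factor of $\delta$ (on high-regularity terms) or loses only $C_\delta$ on a lower-order norm, by exactly the same mechanism as used to control $R(p,\psi)$ and $\KK_2$ in the proof of Lemma \ref{lem-difference}.

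For the remainder $K_\eps^w$, note from its explicit form that it contains only spatial operators of order at most $3$ (with the highest order term being the shape derivative $\delta B_\eps^w$ and the $s_\pm$-dependent terms involving at most $f_{xx|x=0,1}$). Hence $[K_\eps^w,\psi_{1\delta}]f$ has order at most $2$ in $f$, and both its local and nonlocal contributions are bounded directly by Lemma \ref{lem-algebra} and \eqref{small-large}, yielding the same right-hand side $C_\delta\PXNL{f}{k}{Q_\tau}+C\delta\PXNL{f}{k+1}{Q_\tau}$. Combining the three estimates completes the proof.
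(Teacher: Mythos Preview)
Your overall strategy --- split the commutator into local pieces (where Leibniz directly saves one derivative) and a genuinely nonlocal piece $[B_\eps^w,M_{\psi_{1\delta}}]f_x$ --- is the same route the paper takes, and the tools you invoke (Proposition~\ref{prp-p-linear}, Lemma~\ref{lem-pressure-trace}, the interpolation \eqref{small-large}) are the right ones.

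However, your treatment of the nonlocal piece contains a genuine error. You claim that $[B_\eps^w,M_{\psi_{1\delta}}]f_x$ is ``the trace on $\Gamma^w$ of a harmonic function whose Dirichlet data vanish outside $\supp\psi_{1\delta}'$''. This is not what happens. If $R$ is $\Delta_\eps$-harmonic with Dirichlet data $\psi_{1\delta} f_x$ and $Q$ is $\Delta_\eps$-harmonic with Dirichlet data $f_x$, then (extending $\psi_{1\delta}$ to be constant in $y$) the relevant object $R-\psi_{1\delta}Q$ has \emph{identically zero} Dirichlet data but is \emph{not} harmonic: one has
\[
\Delta_\eps(R-\psi_{1\delta}Q)\ =\ -2\,\psi_{1\delta}'\,Q_x-\psi_{1\delta}''\,Q,
\]
a forcing supported in $\supp\psi_{1\delta}'$ and involving only one derivative of $Q$. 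It is this lower-order forcing, not any smallness of boundary data, that yields the regularity gain. Moreover the commutator is not literally the Neumann trace of $R-\psi_{1\delta}Q$ either: commuting $\psi_{1\delta}$ through the Neumann operator $(-h_x^w\partial_x+(\tfrac1\eps)^2\partial_y)$ produces an extra term $h_x^w\psi_{1\delta}'\,Q|_{\Gamma^w}$, which is again lower order.

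The paper's version is slightly more economical: it defines $P$ harmonic with data $(\psi_{1\delta}f)_x$ and works with $P-\psi_{1\delta}Q$ directly, so that the boundary data become $\psi_{1\delta}'f$ (absorbing your first piece) and the forcing is again $-2\psi_{1\delta}'Q_x-\psi_{1\delta}''Q$; a single application of \eqref{est-p} and Lemma~\ref{lem-pressure-trace} then gives the bound. Your three-term splitting is fine, but the argument for the third term needs to be rewritten along these lines.
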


\begin{proof}
  Let $P$ and $Q$ be the solutions of
  \begin{align} \label{P-def12} %
    \left \{
      \begin{array}{ll}
        \Delta_\eps P = 0  \qquad &\text{ in }  \Omega^w, \\
        P = (\psi_{1\delta} f)_x  &\text{ on } \partial_1 \Omega^w, \\
        P_y = 0  &\text{ on } \partial_1 \Omega^w,
      \end{array} 
    \right.  && \left \{
      \begin{array}{ll}
        \Delta_\eps Q = 0  \qquad &\text{ in }  \Omega^w, \\
        Q =  f_x  &\text{ on } \partial_1 \Omega^w, \\
        Q_y = 0  &\text{ on } \partial_0 \Omega^w.
      \end{array} 
    \right.
  \end{align} 
  Arguing as above, we can prove that $ Q, P \in Y^{k+1}_\eps (\Omega^w) $ and
  that $ \| Q \|_{ Y^{k+i}_\eps (\Omega^w) }, \| P \|_{ Y^{k+i}_\eps
    (\Omega^w) } \leq C(w) \| f \|_{X^{k+i}_\eps} $ for $i=0,1$.  Moreover,
  \begin{align}
    \tilde \psi_{1\delta} [L_\eps,\psi_{1\delta}]f \ &= \ \tilde \psi_{1\delta}
    \big[ ( - w_x P_x + \frac1{\eps^2} P_y )_x -
    \psi_{1\delta}  ( - w_x  Q_x + \frac1{\eps^2} Q_y  )_x     \big] \notag  \\
    &= \ \tilde \psi_{1\delta} \big[ ( - w_x (P- \psi_{1\delta} Q )_x +
    \frac1{\eps^2} (P - \psi_{1\delta} Q )_y )_x \big] + \KK, \label{commu}
  \end{align}
  where the terms of lower order are collected in $\KK$. Note that $P
  -\psi_{1\delta} Q $ solves
  \begin{align} \label{PQ-def} %
    \left \{
      \begin{array}{ll}
        \Delta_\eps (P - \psi_{1\delta}Q)  = 
        -2 \partial_x   \psi_{1\delta}  \partial_x Q -   
        \partial_x^2   \psi_{1\delta} \,  Q     \quad &\text{ in }  \Omega^w, \\
        P - \psi_{1\delta}Q =  \psi_{1\delta,x} f  &\text{ on } \partial_1 \Omega^w, \\
        (P - \psi_{1\delta}Q) _y = 0  &\text{ on } \partial_0 \Omega^w.
      \end{array} 
    \right.
  \end{align}
  The right-hand side of \eqref{PQ-def} is more regular and allow us to estimate
  $P -\psi_{1\delta} Q $ by $ \YN{P -\psi_{1\delta} Q }{k} \ \leq \ C \XN{ f}{k}
  +\frac1{10} \XN{ f}{k+1} $. Moreover, the operator $K$ on the right hand side
  of \eqref{commu} only involve one derivative of $Q$ and hence can be estimated
  similarly. This concludes the proof of the Lemma.
\end{proof}

\subsection{Proof of Proposition \ref{prp-local-nonlinear}} 

A localization of the estimate in Lemma \ref{lem-time-trace} yields the
following estimate: For all $k,\gamma \in \N_0$, $\tau > 0$ and $f_0 \in
\PXSLoo{k+1}{Q_\tau}$ we have
\begin{gather} \label{est-trace-local} %
  \|\partial_t^i f_0\|_{C^0(\XSL{k+1/2}{E})} \ \leq \ C \big(\|\partial_t^{i+1}
  f_0\|_{L^2(\XSL{k}{E})} + \|\partial_t^i f_0\|_{L^2(\XSL{k+1}{E})} \big).
\end{gather}
In view of Lemma \ref{lem-time-trace}, it remains to show boundedness and continuous
differentiability of $\LL_\eps$, defined in \eqref{def-LL1}. The highest order
term of $\LL_\eps$ is given by the operator $\partial_x B_\eps^f$, see
\ref{def-bt}. Boundedness of this operator follows from a localization of the
estimates in Proposition \ref{prp-p-nonlinear} and Lemma
\ref{lem-pressure-trace}: For $p$ be defined by \eqref{def-bt}, we have
\begin{align*}
  \XNL{ \partial_x B_\eps^f f_x}{k}{E} \ %
  &\leq \ C \big(\XNL{\partial_x(p_x)_{|\Gamma}}{k}{E} + (\feps)^2
  \XNL{\partial_x(p_y)_{|\Gamma}}{k}{E} \big) \\ %
  &\leq \ C \big(\XNL{f}{k+1}{E} + \ZNL{g}{k+1}{E} \big),
\end{align*}
where $g$ is a lower order term. Indeed, near the left boundary of $E$, $g$ is
the same term as in the right hand side of \eqref{p-def2}. In the center and
near the right boundary of $E$, $g$ is defined analogously. By a localization of
\eqref{RR-est} and since $\XNL{f}{k+1}{Q_\tau} \leq C$, we hence obtain
\begin{align*}
  \XN{\partial_x B_\eps^f f_x}{k} \ \leq \ C \XNL{f}{k+1}{E}.
\end{align*}
Furthermore, also using that $\XNL{f}{k+1}{Q_\tau} \leq C$, we have
\begin{align*}
  \XN{\dot s_\pm (f+f^*)}{k} %
  &\leq C \nco{\dot s_\pm} \big(\XN{f}{k}+ \XN{f^*}{k} \big) %
  \leq C \XN{f}{1} \big(\XN{f}{k}+ \XN{f^*}{k} \big) %
  \leq C \XN{f}{k+1}.
\end{align*}
It remains to show boundedness and continuity of the first derivative
$\DLL_\eps^w$: In view of \eqref{L-der}, the estimate of $\DLL_\eps^w$ leads
to analogous terms as the one for the estimate of $\LL_\eps$. The estimate hence
follows analogously and is not more difficult than the estimate of $\LL_\eps$
itself. Similarly, continuity of the first derivative follows by a bound on the
second derivative. As before, in view of the structure of the operator it is
clear that these terms do not impose any other difficulties than the one's
already when estimating the operator $\LL_\eps$ itself.  Hence, a bound on the
second derivative can be obtained similarly to the calculations before.

%\renewcommand{\appendixtocname}{Appendix}
%\addappheadtotoc
%\setcounter{section}{1}
%\setcounter{equation}{0}

%{\small
%\bibsep=1pt
%\bibliographystyle{spmpsci}
\bibliographystyle{abbrv} \bibliography{all} % name your BibTeX data base
%}

\end{document}